\documentclass[11pt]{article}
\usepackage[margin=1in]{geometry}
\usepackage{amsmath,amssymb,amsthm,mathtools}
\usepackage{enumitem}
\usepackage{microtype}
\usepackage{xcolor}
\usepackage{hyperref}
\hypersetup{colorlinks=true,linkcolor=blue!55!black,citecolor=blue!55!black,urlcolor=blue!55!black}
\numberwithin{equation}{section}
\newtheorem{theorem}{Theorem}[section]
\newtheorem{lemma}[theorem]{Lemma}
\newtheorem{proposition}[theorem]{Proposition}
\newtheorem{corollary}[theorem]{Corollary}
\theoremstyle{definition}

\newtheorem{assumption}[theorem]{Assumption}
\newtheorem{example}[theorem]{Example}
\theoremstyle{remark}
\newtheorem{remark}[theorem]{Remark}
\newcommand{\tr}{\operatorname{tr}}
\newcommand{\Span}{\operatorname{span}}

\newcommand{\Id}{I}
\newcommand{\fro}{\mathbin{:}}
\newcommand{\abs}[1]{\lvert #1\rvert}
\newcommand{\norm}[1]{\lVert #1\rVert}
\newcommand{\ip}[2]{\langle #1,#2\rangle}

\title{Geometric and stochastic framework for a regularized $p(x)$-Laplacian}
\author{Mustafa Avci\\Faculty of Science and Technology, Applied Mathematics, Athabasca University, Canada\\
\texttt{mavci@athabascau.ca (primary); avcixmustafa@gmail.com}}

\begin{document}
\maketitle

\begin{abstract}
For a prescribed classical profile and a fixed positive regularization parameter, we construct an It\^o diffusion whose interior generator reproduces the regularized $p(x)$-Laplacian when applied to that profile. We determine the exact tangent spectra of the unregularized and regularized fluxes, characterize their behavior at zero gradient, and establish quantitative ellipticity bounds on bounded gradient ranges. The full divergence expansion identifies a covariance equal to twice the gradient derivative of the regularized flux, evaluated at the profile gradient, and a logarithmic drift generated by spatial variation of the variable exponent  $p(x)$. Under explicit classical regularity assumptions, we establish boundedness and local Lipschitz continuity of the coefficients and prove strong existence and pathwise uniqueness up to domain exit. We identify the interior generator of the boundary-stopped process and verify the profile-reproduction identity. Further consequences include initial-point stability for a fixed global extension, localized martingale identities, and Dynkin's formula at bounded stopping times. An explicit nonconstant exponent example illustrates the construction at a critical point, and the coefficient selection is shown to be canonical under the flux-based prescriptions rather than determined uniquely by profile reproduction alone.
\end{abstract}

\noindent\textbf{Keywords.} regularized $p(x)$-Laplacian; degenerate ellipticity; stochastic processes; It\^o diffusion; infinitesimal generator.

\noindent\textbf{2020 Mathematics Subject Classification.} 35J60, 35J92, 35J70, 60H10, 60J60.

\section{Introduction}\label{sec:intro}

The problem addressed here is to construct, for a prescribed classical profile $u$ and a fixed $\varepsilon>0$, a strongly well-posed It\^o diffusion whose interior generator reproduces the regularized $p(x)$-Laplacian when applied to $u$. The  regularized $p(x)$-Laplacian operator under consideration is of the form
\begin{equation}\label{eq:intro-main-operator}
 \Delta_{p(\cdot),\varepsilon}v:=\mathrm{div}\left[(\varepsilon^2+\abs{\nabla v}^2)^{(p(x)-2)/2}\nabla v\right]\,\text{ in }\Omega.
\end{equation}

Variable exponent elliptic operators arise in models whose constitutive growth varies with spatial position. Applications include fluid mechanics \cite{ruzicka2007electrorheological,zhikov1997meyer}, flow through porous media \cite{amaziane2009nonlinear}, magnetostatics \cite{cekic2012lp}, image restoration \cite{chen2006variable}, and capillarity \cite{avci2013ni}; while related anisotropic systems are considered in \cite{boureanu2023}. The analysis of these models rests on the theory of variable exponent Lebesgue and Sobolev spaces. Their structural properties were developed in \cite{FanZhao,KovacikRakosnik} and are treated systematically in the monographs \cite{cruz2013variable,diening2011lebesgue,RadulescuRepovs}, which provide the functional-analytic and variational framework for equations with spatially dependent growth.

Within this framework, regularity and the relation between different solution concepts have received substantial attention. Acerbi and Mingione \cite{AcerbiMingione} establish gradient estimates for
$p(x)$-Laplace systems, while Fan \cite{FanGlobal} develops global $C^{1,\alpha}$ regularity for variable-exponent equations in divergence
form. Juutinen, Lukkari, and Parviainen \cite{JuutinenLukkariParviainen} prove equivalence between weak and viscosity solutions of the homogeneous divergence-form $p(x)$-Laplace
equation; Medina and Ochoa \cite{MedinaOchoa} address nonhomogeneous equations under suitable hypotheses. For the distinct normalized operator, Siltakoski \cite{Siltakoski} relates viscosity solutions to distributional weak solutions of the associated strong $p(x)$-Laplace equation. Of particular relevance to the present construction, the analysis in \cite{JuutinenLukkariParviainen} also
identifies the logarithmic term generated by spatial variation of the exponent and employs the tangent matrix and estimates for its square root in the comparison argument.

Probabilistic approaches provide a complementary perspective. Peres and Sheffield \cite{PeresSheffield} connect the constant exponent $p$-Laplace equation with tug-of-war games with noise, and Manfredi,
Parviainen, and Rossi \cite{ManfrediParviainenRossi} characterize $p$-harmonic functions through asymptotic mean-value formulas. Arroyo, Heino, and Parviainen \cite{ArroyoHeinoParviainen} extend the
game framework to spatially varying probabilities associated with the normalized $p(x)$-Laplacian. Mean-value descriptions also encompass the variational $p$-Laplacian \cite{DelTesoLindgren} and, for $p>2$, nonhomogeneous equations with forcing \cite{DelTesoRossi}. In \cite{avci2026un}, Avci established existence and uniqueness results for a time-inhomogeneous nonlinear SDE whose drift and diffusion are governed by state-dependent variable exponents. This framework generalizes geometric Brownian motion and constant elasticity of variance (CEV) models. It also provides a suitable setting in which the deterministic Poisson equation admits a probabilistic representation via a time-homogeneous nonlinear SDE with state-dependent variable exponents.

A more direct precursor to the present diffusion construction is the paper of Luiro and Parviainen
\cite{LuiroParviainenPreprint}: for a prescribed constant exponent $p$-harmonic function, they construct a continuous-time diffusion and obtain an exact exit representation under a restriction on
the critical set. Away from critical points, its second-order coefficient shares the radial--tangential structure of the unregularized flux derivative, with a normalized scalar prefactor. These connections motivate a construction for the full variable exponent divergence operator that retains the exponent-gradient contribution and controls the coefficients through zero gradients.

The specific gain of the present paper is the explicit assembly and quantitative control of these ingredients for the full regularized variable exponent operator at an arbitrary prescribed classical profile. The covariance retains the flux's unnormalized tangent scale, the drift retains its explicit spatial derivative, and fixed positive regularization allows coefficient estimates through all zero gradients without a restriction on the critical set. These estimates lead to a pathwise-unique stopped strong diffusion and its exact profile-reproduction identity. The spectral calculations and Lipschitz SDE methods themselves are standard; the contribution is this coefficient-level framework and its precise hypotheses, bounds, and stochastic consequences. Two difficulties must be resolved: the unregularized tangent matrix can degenerate or become singular at zero gradient, and spatial variation of $p$ produces the logarithmic first-order contribution $c_\varepsilon(x,\nabla u)\cdot\nabla u$ in \eqref{eq:intro-expansion}, whose coefficient contains $\nabla p$. For the unregularized flux
\begin{equation}\label{eq:unreg-flux-intro}
A(x,\xi)=\abs{\xi}^{p(x)-2}\xi,\quad A(x,0)=0,
\end{equation}
the tangent matrix $D(x,\xi)=D_\xi A(x,\xi)$ tends to zero as $\xi\to0$ when $p(x)>2$ and has unbounded eigenvalues when $1<p(x)<2$, with $x$ fixed. These difficulties motivate regularization and the complete spatial expansion in the proposed stochastic construction.

We prescribe a bounded domain $\Omega\subset\mathbb{R}^N$, $N\geq1$, an open set $\mathcal{U}$ containing $\overline\Omega$ compactly, and data $p\in C^{1,1}(\mathcal{U})$, $u\in C^{2,1}(\mathcal{U})$; see Assumption \ref{ass:classical-framework}. Our objective is to obtain strong existence and pathwise uniqueness up to domain exit and to prove the generator's exact action on this prescribed profile.
We first determine the flux geometry and regularize it by setting
\begin{align}
 r_\varepsilon(\xi)&=(\varepsilon^2+\abs{\xi}^2)^{1/2},\label{eq:intro-r-eps}\\
 A_\varepsilon(x,\xi)&=r_\varepsilon(\xi)^{p(x)-2}\xi,\label{regularized-flux}\\
 D_\varepsilon(x,\xi)&=D_\xi A_\varepsilon(x,\xi),\label{eq:intro-reg-tangent}\\
 c_\varepsilon(x,\xi)&=r_\varepsilon(\xi)^{p(x)-2}\log r_\varepsilon(\xi)\,\nabla p(x).
 \label{eq:intro-c-eps}
\end{align}
The classical chain rule gives
\begin{equation}\label{eq:intro-expansion}
 \mathrm{div} A_\varepsilon(x,\nabla u)=D_\varepsilon(x,\nabla u)\fro D^2u+c_\varepsilon(x,\nabla u)\cdot\nabla u.
\end{equation}
We then freeze the gradient at $\nabla u(x)$ and define
\begin{align}
 b_{u,\varepsilon}(x)&=c_\varepsilon(x,\nabla u(x)),\label{eq:intro-frozen-coefficients}\\
 D_{u,\varepsilon}(x)&=D_\varepsilon(x,\nabla u(x)),\label{eq:intro-frozen-coefficients-a}\\
 \sigma_{u,\varepsilon}(x)&=(2D_{u,\varepsilon}(x))^{1/2},\label{eq:intro-sigma}
\end{align}
where the square root is the principal symmetric positive-definite square root. The covariance is $2D_{u,\varepsilon}$; its factor $2$ compensates for the factor $1/2$ in It\^o's formula. With an $N$-dimensional Brownian motion $W_t$ and exit time $\tau_\Omega=\inf\{t\geq0:X_t\notin\Omega\}$, the process to be constructed solves
\begin{equation}\label{eq:intro-profile-sde}
\left\{\begin{aligned}
dX_t&=b_{u,\varepsilon}(X_t)dt+\sigma_{u,\varepsilon}(X_t)dW_t,&&0\leq t<\tau_\Omega,\\X_0&=x\in\Omega,
\end{aligned}\right.
\end{equation}
Our method combines exact spectral and chain-rule calculations with matrix square-root estimates, cutoff localization, Picard iteration, and stopped It\^o formulas
\cite{Bhatia1997,HighamFunctions,KaratzasShreve,Oksendal,StroockVaradhan}.

We obtain the exact radial--tangential spectra, the dimension-one corrections, and quantitative ellipticity bounds for fixed $\varepsilon>0$ on bounded gradient ranges. In particular,
$D_\varepsilon(x,0)=\varepsilon^{p(x)-2}\Id$, so the regularized matrix remains positive definite at critical points. The frozen coefficients are bounded on $\overline\Omega$ and locally Lipschitz on $\mathcal{U}$. Theorem \ref{thm:sde-existence} consequently gives a pathwise-unique strong solution of \eqref{eq:intro-profile-sde} up to exit, with no explosion before $\tau_\Omega$. Theorem \ref{thm:companion-generator} identifies the interior generator of the boundary-stopped process as
\begin{equation}\label{eq:intro-generator}
 \mathcal{L}_{u,\varepsilon}\phi=D_{u,\varepsilon}\fro D^2\phi+b_{u,\varepsilon}\cdot\nabla\phi,
\end{equation}
and proves
\begin{equation}\label{eq:intro-profile-reproduction}
\mathcal{L}_{u,\varepsilon}u=\Delta_{p(\cdot),\varepsilon}u=\mathrm{div} A_\varepsilon(x,\nabla u)\,\text{ in }\Omega.
\end{equation}
Further consequences are initial-point stability for a fixed global extension, localized martingale identities, and Dynkin's formula at bounded stopping times
(Corollaries \ref{cor:sde-stability}, \ref{cor:local-martingale}, and \ref{cor:dynkin}).

The construction gives an explicit connection between the tangent geometry of a nonlinear variable exponent operator and a classical stopped diffusion. Its covariance records the directional response at
$\nabla u$, while its drift retains the contribution of $\nabla p$. Once $u$ is fixed, the generator is linear in its test function, making stochastic calculus available for identities involving that profile. For example, if $-\Delta_{p(\cdot),\varepsilon}u=f$, then $\mathcal{L}_{u,\varepsilon}u=-f$; the eigenprofile consequence in Corollary \ref{cor:generator-eigenprofile} has the same conditional
character. The profile remains prescribed: the construction supplies neither a PDE existence theorem nor a stochastic evolution of $u$. The companion generator also generally differs from the Fr\'echet linearization (Remark~\ref{rem:frechet-linearization}) $v\mapsto \mathrm{div}(D_{u,\varepsilon}\nabla v)$. All stochastic conclusions concern fixed $\varepsilon>0$; the coefficient limits away from zero gradient do not establish convergence of the diffusions across critical points as $\varepsilon\downarrow0$.

The organization of the paper is as follows. Section \ref{sec:framework} states the deterministic and stochastic assumptions and fixes the notation. Section \ref{sec:unreg-geometry} develops the unregularized tangent geometry and divergence expansion. Section \ref{sec:regularization} establishes the regularized geometry, quantitative ellipticity, and regularity of the frozen coefficient fields. Section \ref{sec:main-results} constructs the diffusion and proves its generator and stochastic identities.  Appendix \ref{app:example} supplies an nonconstant exponent example for illustration of the construction.

\section{Analytic, deterministic, and stochastic framework}\label{sec:framework}

We distinguish the hypotheses needed for pointwise flux geometry, for classical divergence expansions, and for the stochastic construction.

\subsection{Domain, exponent, and prescribed profile}

Throughout, $N\geq1$, $\Omega\subset\mathbb{R}^N$ is a nonempty open domain, and $\varepsilon>0$ is fixed whenever a regularized quantity is used. For sets $K$ and an open set $G$, the notation $K\Subset G$ means that $\overline K$ is compact and contained in $G$.
\begin{assumption}\label{ass:exponent-range}
Let $p:\Omega\to(1,\infty)$ be a measurable function. Fix constants $p^-$ and $p^+$ such that
\begin{equation}\label{eq:p-bounds}
1<p^-\leq p(x)\leq p^+<\infty\quad(x\in\Omega).
\end{equation}
Here $p^-$ and $p^+$ denote uniform lower and upper bounds; they are not required to equal the essential extrema on $\Omega$. 
\end{assumption}
For the spectrum at a fixed pair $(x,\xi)$, only the number $p(x)>1$ is needed; continuity of $p$ is unnecessary. The common bounds in \eqref{eq:p-bounds} are used when estimates must hold uniformly in $x$.
\begin{assumption}\label{ass:classical-deterministic}
Assumption \ref{ass:exponent-range} holds, $p\in C^1(\Omega)$, and the prescribed profile satisfies $u\in C^2(\Omega)$. For the regularized operator, $\varepsilon>0$ is fixed.
\end{assumption}

These hypotheses justify the classical chain rule. The unregularized expansion is used on the noncritical set $\{x\in\Omega:\nabla u(x)\neq0\}$; the regularized expansion is valid throughout $\Omega$.

\begin{assumption}\label{ass:classical-framework}\label{ass:classical-profile}\label{ass:framework} The domain $\Omega$ is bounded and there is an open set $\mathcal{U}$ such that
$\overline\Omega\Subset\mathcal{U}$. For a fixed $\varepsilon>0$, the exponent and the prescribed deterministic profile satisfy
\begin{equation}\label{eq:p-range}
 \begin{gathered}
p\in C^{1,1}(\mathcal{U}),\quad u\in C^{2,1}(\mathcal{U}),\\
1<p^-\leq p(x)\leq p^+<\infty\quad x\in\mathcal{U}.
 \end{gathered}
\end{equation}
\end{assumption}
In this stronger framework, the same chosen bounds $p^-,p^+$ apply on $\mathcal U$; no equality with the essential extrema on $\Omega$ is imposed. Here $C^{k,1}$ means $C^k$ regularity with derivatives of order $k$ locally Lipschitz. Fix once and for all an open set $V$ satisfying
\begin{equation}\label{eq:framework-neighborhood}
\overline\Omega\subset V\Subset\mathcal{U}, \quad M_u:=\sup_{\overline V}\abs{\nabla u}<\infty.
\end{equation}
The compact neighborhood used for the profile estimates is
$\overline V$, and the corresponding compact set for functions of both
position and gradient is
\begin{equation}\label{eq:framework-product-set}
K:=\overline V\times\{\xi\in\mathbb{R}^N:|\xi|\leq M_u\}.
\end{equation}
If $M_u=0$, the gradient factor in $K$ is the singleton $\{0\}$. Bounds for $p$, $\nabla p$, $\nabla u$, and $D^2u$ are taken on $\overline V$; bounds for $A_\varepsilon$, $D_\varepsilon$, and their derivatives in the gradient variable are taken on $K$.  Any additional neighborhood used for a cutoff or a local estimate will be named explicitly.

The first derivatives of $p$ and the second derivatives of $u$ are also bounded on $\overline V$. This neighborhood formulation provides coefficients up to the exit point and permits extensions used in the SDE construction.  It does not require smoothness of $\partial\Omega$, a boundary condition on $u$, or positivity of $u$.

Assumption \ref{ass:classical-framework} is a sufficient regularity framework for the stochastic results. It is imposed on the data, rather than derived from an elliptic regularity theorem. In particular, the condition $p\in C^{1,1}$ controls the local Lipschitz regularity of $\nabla p$ in the drift. Every use of this stronger framework will be indicated explicitly.

\subsection{Flux, energy density, and differentiation conventions}

We use $A(x,0)=0$ for the continuous extension of the unregularized flux in \eqref{eq:unreg-flux-intro}. Its gradient derivative $D(x,\xi)$ is initially considered for $\xi\neq 0$. The regularized flux $A_\varepsilon$ and its derivative $D_\varepsilon$ are defined for all $\xi\in\mathbb{R}^N$. The unregularized and regularized energy densities are defined by
\begin{equation}\label{eq:framework-energy-densities}
\Phi(x,\xi)=\frac{\abs{\xi}^{p(x)}}{p(x)}\quad \text{and} \quad \Phi_\varepsilon(x,\xi)=\frac{(\varepsilon^2+\abs{\xi}^2)^{p(x)/2}-\varepsilon^{p(x)}}{p(x)},
\end{equation}
respectively. At fixed $x$, differentiation of $\Phi$ and $\Phi_\varepsilon$  in $\xi$ gives
\begin{equation}\label{eq:framework-energy-derivatives}
 D_\xi\Phi=A,\quad D_\xi\Phi_\varepsilon=A_\varepsilon,\quad
 D_\xi^2\Phi_\varepsilon=D_\varepsilon A_\varepsilon=D_\varepsilon.
\end{equation}
For $\xi\neq0$, $D_\xi A(x,\xi)=D_\xi^2\Phi(x,\xi)$ is the Hessian of the unregularized energy. For every $\xi$, $D_\xi A_\varepsilon(x,\xi)=D_\xi^2\Phi_\varepsilon(x,\xi)$ is the Hessian of the regularized energy. Notice that the subtraction of $\varepsilon^{p(x)}/p(x)$ sets $\Phi_\varepsilon(x,0)=0$ and does not affect these gradient derivatives.  We use the energy densities to explain the tangent matrix; selecting a profile by minimizing an energy is outside the construction studied here.

The notation $D_\xi$ always denotes differentiation in the gradient variable while holding $x$ fixed. Conversely, an explicit spatial derivative of $A_\varepsilon(x,\xi)$ holds $\xi$ fixed. For a composed flux, these are distinct parts of the chain rule
\begin{align}\label{eq:framework-chain-rule}
\mathrm{div} A_\varepsilon(x,\nabla u(x)) =\sum_i(\partial_{x_i}A_{\varepsilon,i})(x,\nabla u(x))+\sum_{i,j}(\partial_{\xi_j}A_{\varepsilon,i})(x,\nabla u(x))u_{x_jx_i}(x).
\end{align}
This convention explains the two coefficients in \eqref{eq:intro-expansion}.  At a critical point, the scalar contribution $c_\varepsilon(x,\nabla u)\cdot\nabla u$ is zero, although the vector $c_\varepsilon(x,0)=\varepsilon^{p(x)-2}\log\varepsilon\,\nabla p(x)$ need not be zero.

\subsection{Analytic and matrix notation}

For a scalar function $v$, we write
\begin{equation}\label{eq:gradient-hessian-notation}
\nabla v=(v_{x_1},\ldots,v_{x_N})^\top,\quad D^2v=(v_{x_ix_j})_{i,j=1}^N,\quad \Delta v=\tr(D^2v).
\end{equation}
The symbols $\abs{\xi}$ and $\ip{\xi}{\eta}$ denote the Euclidean norm and inner product. For vectors $\xi,\eta\in\mathbb{R}^N$,
\begin{equation}\label{eq:tensor-definition}
\xi\otimes\eta=(\xi_i\eta_j)_{i,j=1}^N.
\end{equation}
For $\xi\neq0$, the matrix
\begin{equation}\label{eq:projection-definition}
 P_\xi=\frac{\xi\otimes\xi}{\abs{\xi}^2}
\end{equation}
is the orthogonal projection onto $\Span\{\xi\}$.  Hence
\begin{equation}\label{eq:parallel-perpendicular-decomposition}
\eta=\eta_\parallel+\eta_\perp,\quad
\eta_\parallel=P_\xi\eta,\quad
 \eta_\perp=(\Id-P_\xi)\eta.
\end{equation}
If $N=1$, then $P_\xi=\Id$ and $\eta_\perp=0$.  No projection $P_0$ is assigned; formulas at zero gradient are evaluated directly from the regularized matrix.

For matrices $M,H\in\mathbb{R}^{N\times N}$, the Frobenius contraction is
\[
 M\fro H=\sum_{i,j=1}^N M_{ij}H_{ij}=\tr(MH^\top).
\]
We use the operator norm and Frobenius norm
\begin{align}
 \norm{M}_{\mathrm{op}}&=\sup_{\abs{\eta}=1}\abs{M\eta},\label{eq:operator-matrix-norm}\\
\norm{M}_{\mathrm F}&=(M\fro M)^{1/2}=\left(\sum_{i,j=1}^N\abs{M_{ij}}^2\right)^{1/2}.\label{eq:frobenius-matrix-norm}
\end{align}
They satisfy
\begin{equation}\label{eq:matrix-norm-equivalence}
\norm{M}_{\mathrm{op}}\leq\norm{M}_{\mathrm F}\leq\sqrt N\,\norm{M}_{\mathrm{op}}.
\end{equation}
For a differentiable matrix field, our divergence convention is
\begin{equation}\label{eq:framework-matrix-divergence}
(\mathrm{div} M)_j=\sum_{i=1}^N\partial_{x_i}M_{ij},\quad\mathrm{div}(M\nabla\phi)=M\fro D^2\phi+(\mathrm{div} M)\cdot\nabla\phi.
\end{equation}

\begin{remark}\label{rem:proof-matrix-norms-paper1}
An unsubscripted matrix norm in the deterministic coefficient and square-root estimates means $\norm{\cdot}_{\mathrm{op}}$.  In the SDE existence and stability proofs, every norm of a diffusion matrix,
including those defining its bounds and Lipschitz constants, means $\norm{\cdot}_{\mathrm F}$.  This convention ensures that the constants used in the It\^o isometry and martingale estimates are the ones actually defined for those proofs.  Formula~\eqref{eq:matrix-norm-equivalence} transfers earlier operator-norm bounds with the required factor $\sqrt N$.
\end{remark}

A symmetric matrix field $D$ is uniformly elliptic on $G$ if constants $0<\alpha\leq\beta<\infty$ satisfy
\begin{equation}\label{eq:uniform-ellipticity-definition}
\alpha\abs{\eta}^2\leq\eta^\top D(x)\eta\leq\beta\abs{\eta}^2 \quad x\in G,\ \eta\in\mathbb{R}^N.
\end{equation}
The constants are independent of $x$ and $\eta$ on the indicated set; in this paper they may depend on $\varepsilon$, the exponent bounds, and a bound for the profile gradient.  For a symmetric positive-definite matrix $S$, its spectral condition number is
\[
\kappa(S)=\norm{S}_{\mathrm{op}}\norm{S^{-1}}_{\mathrm{op}}=\frac{\lambda_{\max}(S)}{\lambda_{\min}(S)}.
\]
In particular, it equals one for a positive $1\times1$ matrix.

Let $\operatorname{Sym}_N^{++}$ be the cone of symmetric positive-definite matrices.  Every $S\in\operatorname{Sym}_N^{++}$ has a unique principal square root $S^{1/2}\in\operatorname{Sym}_N^{++}$.  The square-root map is smooth on this cone and Lipschitz on sets with a common positive lower spectral bound and a finite upper bound \cite{Bhatia1997,HighamFunctions}.  This is the matrix fact used to pass
from $D_{u,\varepsilon}$ to $\sigma_{u,\varepsilon}$.

\subsection{Probability space, exit times, generator and local strong solutions}

Let $(\Xi,\mathcal F,(\mathcal F_t)_{t\geq0},\mathbb{P})$ be a filtered probability space with a complete, right-continuous filtration, carrying an $N$-dimensional Brownian motion $W_t$ with respect to that filtration. The initial point $x$ is deterministic; $\mathbb{E}^x$ denotes expectation for the solution started at $x$.

For coefficient fields $b:G\to\mathbb{R}^N$ and $\sigma:G\to\mathbb{R}^{N\times N}$ on an open set $G$, consider
\begin{equation}\label{eq:generic-sde}
dX_t=b(X_t)dt+\sigma(X_t)dW_t,\quad X_0=x\in G.
\end{equation}
A strong solution is a continuous adapted process driven by the given Brownian motion that satisfies the corresponding integral equation up to its lifetime.  Pathwise uniqueness means that two such solutions on the same probability space, with the same Brownian motion and initial point, agree up to their common lifetime.

The covariance matrix and interior generator associated with \eqref{eq:generic-sde} are
\begin{equation}\label{eq:generic-generator}
a=\sigma\sigma^\top,\quad \mathcal{L}\phi=b\cdot\nabla\phi+\frac12a\fro D^2\phi.
\end{equation}
For the profile coefficients \eqref{eq:intro-frozen-coefficients}--\eqref{eq:intro-sigma}, the covariance matrix is
\begin{equation}\label{eq:profile-covariance}
a_{u,\varepsilon}:=\sigma_{u,\varepsilon}\sigma_{u,\varepsilon}^{\top}=2D_{u,\varepsilon}.
\end{equation}
Thus $\sigma_{u,\varepsilon}$ is the diffusion coefficient, whereas $a_{u,\varepsilon}$ is the covariance; they are distinct objects. Notice that formula \eqref{eq:generic-generator} gives exactly
\eqref{eq:intro-generator}.

The geometric estimates and chain-rule calculations in the following sections establish that these coefficients satisfy the hypotheses above and that their generator has the required action on the prescribed profile.

For a continuous adapted process and open $G$, the first exit time $\tau_G=\inf\{t\geq0:X_t\notin G\}$ is a stopping time, and if $X_0\in G$ then $X_{\tau_G}\in\partial G$ on $\{\tau_G<\infty\}$.  If $\theta$ is a bounded stopping time and the coefficients and derivatives of $\phi\in C^2$ are bounded on the stopped region, It\^o's formula gives  
\begin{align}\label{eq:ito-pathwise}
 \phi(X_\theta)&=\phi(x)+\int_0^\theta\nabla\phi(X_s)^\top\sigma(X_s)dW_s+\int_0^\theta\mathcal{L}\phi(X_s)ds,
\end{align}
from which one obtains Dynkin's formula
\begin{align}
 \mathbb{E}^x\phi(X_\theta)&=\phi(x)+\mathbb{E}^x\int_0^\theta\mathcal{L}\phi(X_s)ds.
\end{align}
The following standard local well-posedness result is recalled from \cite{KaratzasShreve,Oksendal,StroockVaradhan}.
\begin{proposition}\label{prop:sde-background}
Suppose $b$ and $\sigma$ are locally Lipschitz on $G$.  For every $x\in G$ there is a pathwise unique  strong solution of \eqref{eq:generic-sde}.  Its lifetime is the limit of the exit times from
an increasing exhaustion of $G$ by relatively compact open sets.  If $\Omega\Subset G$ and $x\in\Omega$, the solution is strongly well posed up to
\begin{equation}\label{eq:framework-exit-time}
 \tau_\Omega=\inf\{t\geq0:X_t\notin\Omega\}
\end{equation}
and cannot explode before that exit.  Its stopped path, including the finite exit point, can be constructed from a globally nonexplosive extension of the coefficients.
\end{proposition}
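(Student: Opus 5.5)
The proof proceeds in three stages: a localized Lipschitz problem with uniqueness via Gronwall, a consistency (patching) argument across an exhaustion of $G$, and a final step that uses the cutoff construction itself to handle exit from $\Omega$.

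\emph{Stage 1: local Lipschitz problem.} Fix an exhaustion of $G$ by relatively compact open sets $G_1\Subset G_2\Subset\cdots$ with $\bigcup_n G_n=G$ and $x\in G_1$. For each $n$, choose a smooth cutoff $\chi_n$ with $\chi_n=1$ on $\overline{G_n}$ and compact support in $G_{n+1}$. Set $b_n=\chi_n b$ and $\sigma_n=\chi_n\sigma$, extended by zero to $\mathbb{R}^N$. Since $b$ and $\sigma$ are Lipschitz on the compact set $\overline{G_{n+1}}$, these extensions are globally Lipschitz and bounded on $\mathbb{R}^N$. Norms of $\sigma_n$ are Frobenius norms, as in Remark~\ref{rem:proof-matrix-norms-paper1}.

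For the globally Lipschitz equation with coefficients $(b_n,\sigma_n)$, we run Picard iteration on $[0,T]$. The It\^o isometry and Doob's maximal inequality give
\[
\mathbb{E}\sup_{s\le t}\abs{X^{k+1}_s-X^k_s}^2\le C_T\int_0^t\mathbb{E}\sup_{r\le s}\abs{X^k_r-X^{k-1}_r}^2\,ds.
\]
Hence the iterates converge uniformly on compacts, almost surely, to a continuous adapted strong solution $X^{(n)}$ that is nonexplosive. Pathwise uniqueness for this globally Lipschitz system follows from Gronwall applied to $\mathbb{E}\sup_{s\le t\wedge\rho}\abs{X_s-Y_s}^2$, localized by a stopping time $\rho$ that keeps both processes bounded.

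\emph{Stage 2: consistency and patching.} Let $\tau_n$ be the exit time of $X^{(n)}$ from $G_n$. On $[0,\tau_n]$ the coefficients $(b_n,\sigma_n)$ and $(b_m,\sigma_m)$ agree for every $m\ge n$. Pathwise uniqueness for the stopped equation, again by Gronwall up to $\tau_n\wedge\tau_m$, gives $X^{(m)}=X^{(n)}$ on $[0,\tau_n]$. It follows that $\tau_n\le\tau_{n+1}$ and that the processes patch together into one solution $X$ on $[0,\zeta)$, where $\zeta=\lim_n\tau_n$ is the lifetime. Any other strong solution $Y$ coincides with $X$ up to each $\tau_n$ by the same localized Gronwall argument, hence up to the common lifetime.

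\emph{Stage 3: exit from $\Omega$.} Suppose $\Omega\Subset G$ and $x\in\Omega$. Since $\overline\Omega$ is compact in $G$, there is an $n$ with $\overline\Omega\subset G_n$. Continuity of paths then gives $\tau_\Omega\le\tau_n<\zeta$ on $\{\tau_\Omega<\infty\}$, so the solution cannot explode before exit. Moreover $X_{\tau_\Omega}=X^{(n)}_{\tau_\Omega}\in\partial\Omega$ is finite, and the stopped path is constructed from the globally nonexplosive extension $(b_n,\sigma_n)$, which gives the final claim.

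The main obstacle is the patching step. Uniqueness must be applied to stopped processes where the two equations' coefficients coincide only up to a stopping time. I would handle this by proving uniqueness for $\mathbb{E}\sup_{s\le t\wedge\tau}\abs{X_s-Y_s}^2$ with $\tau$ the minimum of the two exit times, using the Lipschitz constant on $\overline{G_{n+1}}$. Everything else is the standard argument of \cite{KaratzasShreve,Oksendal}.
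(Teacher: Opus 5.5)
Your proposal is correct and is the standard localization argument the paper relies on. The paper gives no separate proof of this proposition (it is recalled from \cite{KaratzasShreve,Oksendal,StroockVaradhan}), but your Stages~1 and~3 are the cutoff-extension, Picard/Gronwall, and stop-at-exit steps it carries out in Theorem~\ref{thm:sde-existence} with a single cutoff equal to one on $\overline{V_1}$, and your exhaustion-and-patching Stage~2 supplies the lifetime assertion that the paper leaves to those references.

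One small correction: on $\{\tau_\Omega<\infty,\ \tau_n=\infty\}$ the chain $\tau_\Omega\le\tau_n<\zeta$ fails as written. State it as $\tau_\Omega<\tau_n\le\zeta$; the strict inequality follows from $X_{\tau_\Omega}\in\partial\Omega\subset G_n$ with $G_n$ open.
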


\section{Tangent diffusion geometry of the unregularized flux}\label{sec:unreg-geometry}

First, we start with the derivative and spectral decomposition.
\newpage
\begin{lemma}\label{lem:unreg-spectrum}
Fix $x\in\Omega$ with $p(x)>1$ and let $\xi\neq 0$.  No regularity of $p$
beyond its value at $x$ is required.  Then:
\begin{enumerate}[label=(\alph*)]
\item The flux derivative is the symmetric matrix
\begin{equation}\label{eq:D-unreg}
 D_{ij}(x,\xi)=\abs{\xi}^{p(x)-2}\delta_{ij}+(p(x)-2)\abs{\xi}^{p(x)-4}\xi_i\xi_j.
\end{equation}
Equivalently,
\begin{equation}\label{eq:D-unreg-projection}
 D(x,\xi)=\abs{\xi}^{p(x)-2}\left(\Id+(p(x)-2)P_\xi\right).
\end{equation}
\item On the radial subspace $\Span\{\xi\}$, $D(x,\xi)$ acts by the eigenvalue
\begin{equation}\label{eq:mu-parallel-unreg}
 \mu_\parallel(x,\xi)=(p(x)-1)\abs{\xi}^{p(x)-2}.
\end{equation}
\item For every $N\geq1$, define the scalar
\begin{equation}\label{eq:mu-perp-unreg}
 \mu_\perp(x,\xi)=\abs{\xi}^{p(x)-2}.
\end{equation}
If $N\geq2$, $D(x,\xi)$ acts on the $(N-1)$-dimensional subspace $\xi^\perp$ by this tangential eigenvalue. If $N=1$, the scalar is auxiliary: $\Id-P_\xi=0$, and no tangential eigenvalue is present.
\item The orthogonal decomposition gives
\begin{equation}\label{eq:D-spectral-unreg}
 D(x,\xi)=\mu_\parallel P_\xi+\mu_\perp(\Id-P_\xi).
\end{equation}
When $N=1$, only the radial eigenvalue is present.  If $N\geq2$ and $p(x)\neq 2$, the two displayed eigenvalues are distinct, with eigenspaces $\Span\{\xi\}$ and $\xi^\perp$ and multiplicities $1$ and $N-1$, respectively.  If $p(x)=2$, then $D(x,\xi)=\Id$; its sole eigenvalue is $1$, with multiplicity $N$ and eigenspace $\mathbb{R}^N$.
\end{enumerate}
\end{lemma}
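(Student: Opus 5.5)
The argument is a direct calculation at the fixed point $x$. Throughout, $q:=p(x)>1$ is a constant and $\xi\neq0$. On $\mathbb{R}^N\setminus\{0\}$ the map $\xi\mapsto\abs{\xi}$ is smooth with $\partial_{\xi_j}\abs{\xi}=\xi_j/\abs{\xi}$. So the scalar $\abs{\xi}^{q-2}$ is smooth there, and
\[
\partial_{\xi_j}\abs{\xi}^{q-2}=(q-2)\abs{\xi}^{q-4}\xi_j.
\]
No property of $p$ other than its value at $x$ enters, since $x$ is never differentiated.

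\textbf{Part (a).} Apply the product rule to $A_i(x,\xi)=\abs{\xi}^{q-2}\xi_i$:
\[
\partial_{\xi_j}A_i=\abs{\xi}^{q-2}\delta_{ij}+(q-2)\abs{\xi}^{q-4}\xi_i\xi_j.
\]
This is \eqref{eq:D-unreg}. The matrix is symmetric because $\xi_i\xi_j$ is. Factoring out $\abs{\xi}^{q-2}$ and using $\xi\otimes\xi/\abs{\xi}^2=P_\xi$ from \eqref{eq:projection-definition} gives \eqref{eq:D-unreg-projection}.

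\textbf{Parts (b) and (c).} Use that $P_\xi$ is the orthogonal projection onto $\Span\{\xi\}$.
\begin{itemize}
\item For $\eta=t\xi$ we have $P_\xi\eta=\eta$, so $D\eta=\abs{\xi}^{q-2}(1+q-2)\eta=(q-1)\abs{\xi}^{q-2}\eta$. This is the radial eigenvalue \eqref{eq:mu-parallel-unreg}.
\item For $\eta\in\xi^\perp$ we have $P_\xi\eta=0$, so $D\eta=\abs{\xi}^{q-2}\eta$. When $N\geq2$, $\xi^\perp$ has dimension $N-1$ and this gives the tangential eigenvalue \eqref{eq:mu-perp-unreg}.
\item When $N=1$, $\xi^\perp=\{0\}$, equivalently $\Id-P_\xi=0$, so $\mu_\perp$ is only an auxiliary scalar.
\end{itemize}

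\textbf{Part (d).} Write $\Id=P_\xi+(\Id-P_\xi)$ in \eqref{eq:D-unreg-projection}:
\[
D=\abs{\xi}^{q-2}\bigl((q-1)P_\xi+(\Id-P_\xi)\bigr),
\]
which is \eqref{eq:D-spectral-unreg}. The two projections are complementary and orthogonal, with ranges $\Span\{\xi\}$ and $\xi^\perp$. Hence this is a spectral decomposition, and the eigenvalues are exactly those in (b) and (c) with multiplicities $1$ and $N-1$.

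The eigenvalues coincide iff $q-1=1$, that is, iff $q=2$, since $\abs{\xi}^{q-2}>0$.
\begin{itemize}
\item If $q\neq2$ and $N\geq2$, the eigenvalues are distinct, so the eigenspaces are exactly $\Span\{\xi\}$ and $\xi^\perp$.
\item If $q=2$, then $D=\Id$, with the single eigenvalue $1$ of multiplicity $N$ on $\mathbb{R}^N$.
\item If $N=1$, $D$ reduces to the single radial eigenvalue.
\end{itemize}

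The only step needing any care is the eigenspace statement in (d). To show the eigenspaces are exactly these subspaces and not larger, I use that a symmetric matrix with two distinct eigenvalues has orthogonal eigenspaces. These eigenspaces contain $\Span\{\xi\}$ and $\xi^\perp$, whose dimensions already sum to $N$, so they cannot be larger.
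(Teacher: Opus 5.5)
Your proposal is correct and follows essentially the same route as the paper: product rule on $A_i=\abs{\xi}^{p(x)-2}\xi_i$, factoring to the projection form, reading off the action on $\Span\{\xi\}$ and $\xi^\perp$, and assembling the orthogonal spectral decomposition. Your orthogonality-and-dimension-count argument for why the eigenspaces are exactly $\Span\{\xi\}$ and $\xi^\perp$ when $p(x)\neq2$ makes explicit a point the paper only asserts.
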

\begin{proof}
$(a)$ Fix $x$. For $\xi\neq 0$,
\[
 \frac{\partial}{\partial\xi_j}\abs{\xi}^{p(x)-2}=(p(x)-2)\abs{\xi}^{p(x)-4}\xi_j.
\]
Thus, applying the product rule gives \eqref{eq:D-unreg}
\[
 \frac{\partial A_i}{\partial\xi_j}=\abs{\xi}^{p(x)-2}\delta_{ij}+(p(x)-2)\abs{\xi}^{p(x)-4}\xi_i\xi_j,
\]
where $A_i(\xi)=\abs{\xi}^{p(x)-2}\xi_i$ is $i$th component of the flux, and $\delta_{ij}$ is the Kronecker delta. On the other hand, factoring out $\abs{\xi}^{p(x)-2}$ and using the definition of $P_\xi$ gives the matrix representation \eqref{eq:D-unreg-projection}. For fixed $x$, the function $\Phi(x,\xi)=|\xi|^{p(x)}/p(x)$ is $C^2$ on $\{\xi\neq0\}$ and
$D_\xi A(x,\xi)=D_\xi^2\Phi(x,\xi)$; hence the matrix is symmetric. Indeed, since $I$ is symmetric and
\[
(\xi\otimes\xi)^T=\xi\otimes\xi
\]
due to the fact that
\[
(\xi\otimes\xi)_{ij}=\xi_i\xi_j=\xi_j\xi_i=(\xi\otimes\xi)_{ji},
\]
it follows that
\[
D_\xi A(x,\xi)^T=D_\xi A(x,\xi).
\]

$(b)$ For any vector $\eta \in \mathbb{R}^N$,
\[
P_\xi \eta=\frac{\xi\cdot \eta}{|\xi|^2}\xi.
\]
If $\eta=c\xi$, then $P_\xi \eta=\eta$. Notice that since $P_\xi \eta$ is a scalar multiple of $\xi$, $P_\xi \eta \in \Span\{\xi\}$.  Finally formula \eqref{eq:D-unreg-projection} gives
\[
D(x,\xi)\eta=|\xi|^{p(x)-2}[I+(p(x)-2)P_\xi]\eta=(p(x)-1)|\xi|^{p(x)-2}\eta.
\]

$(c)$ If $\eta\perp\xi$, then $P_\xi \eta=0$, and hence
\[
 D(x,\xi)\eta=\abs{\xi}^{p(x)-2}\eta.
\]

$(d)$ Every vector $\eta\in\mathbb{R}^N$ decomposes uniquely as $\eta=P_\xi\eta+(\Id-P_\xi)\eta$.  The computations in $(b)$ and $(c)$ show that these are orthogonal invariant subspaces, so
\[
 D(x,\xi)\eta=\mu_\parallel P_\xi\eta+\mu_\perp(\Id-P_\xi)\eta.
\]
This proves \eqref{eq:D-spectral-unreg}.  If $N\geq2$, both subspaces contain nonzero vectors and
\[
\operatorname{spec}\bigl(D(x,\xi)\bigr)=\left\{(p(x)-1)|\xi|^{p(x)-2},\,|\xi|^{p(x)-2}\right\}.
\]
When $p(x)\neq2$, the two numbers are distinct and have multiplicities $1$ and $N-1$, respectively.  When $p(x)=2$, they coincide: the spectrum is the singleton $\{1\}$ and its eigenvalue has multiplicity $N$, with eigenspace $\mathbb{R}^N$.  In dimension one, $\xi^\perp=\{0\}$ supplies no eigenvalue, and instead
\[
\operatorname{spec}\bigl(D(x,\xi)\bigr)=\left\{(p(x)-1)|\xi|^{p(x)-2}\right\}.
\]
\end{proof}

Next, we obtain some sharp ellipticity bounds.

\begin{lemma}\label{thm:unreg-ellipticity}
Fix $x\in\Omega$ with $p(x)>1$ and let $\xi\neq 0$.  Then, for every $\eta\in\mathbb{R}^N$,
\begin{equation}\label{eq:unreg-bounds}
 \min\{1,p(x)-1\}\abs{\xi}^{p(x)-2}\abs{\eta}^2\leq \eta^\top D(x,\xi)\eta\leq \max\{1,p(x)-1\}\abs{\xi}^{p(x)-2}\abs{\eta}^2.
\end{equation}
If $N\geq2$, both constants are sharp and
\begin{equation}\label{eq:kappa-unreg}
 \kappa(D(x,\xi))=\max\left\{p(x)-1,\frac1{p(x)-1}\right\}.
\end{equation}
If $N=1$, then
\begin{equation}\label{eq:N1-D}
 D(x,\xi)=(p(x)-1)\abs{\xi}^{p(x)-2}
\end{equation}
as a $1\times1$ matrix, and
\begin{equation}\label{eq:kappa-N1}
 \kappa(D(x,\xi))=1.
\end{equation}
\end{lemma}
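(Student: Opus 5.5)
The plan is to read everything off the spectral decomposition \eqref{eq:D-spectral-unreg} of Lemma~\ref{lem:unreg-spectrum}. Fix $x$, $\xi\neq0$, and $\eta\in\mathbb{R}^N$. Write $\eta=\eta_\parallel+\eta_\perp$ as in \eqref{eq:parallel-perpendicular-decomposition}. Because $P_\xi$ is an orthogonal projection, the cross terms vanish and
\[
\eta^\top D(x,\xi)\eta=\abs{\xi}^{p(x)-2}\left((p(x)-1)\abs{\eta_\parallel}^2+\abs{\eta_\perp}^2\right),
\qquad \abs{\eta}^2=\abs{\eta_\parallel}^2+\abs{\eta_\perp}^2 .
\]
Both coefficients $p(x)-1$ and $1$ lie between $\min\{1,p(x)-1\}$ and $\max\{1,p(x)-1\}$. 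Bounding each coefficient from below and above and using $\abs{\xi}^{p(x)-2}>0$ gives \eqref{eq:unreg-bounds}. In dimension one the $\eta_\perp$ term is absent, so the same bound holds trivially.

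For sharpness when $N\geq2$, I will test with eigenvectors. Taking $\eta=\xi$ gives $\eta^\top D\eta=(p(x)-1)\abs{\xi}^{p(x)-2}\abs{\eta}^2$. Taking any nonzero $\eta\perp\xi$, which exists since $N\geq2$, gives $\eta^\top D\eta=\abs{\xi}^{p(x)-2}\abs{\eta}^2$. One of these attains the lower constant and the other attains the upper one. Hence neither constant can be improved, and the extreme eigenvalues are $\lambda_{\min}=\min\{1,p(x)-1\}\abs{\xi}^{p(x)-2}$ and $\lambda_{\max}=\max\{1,p(x)-1\}\abs{\xi}^{p(x)-2}$. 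The matrix $D(x,\xi)$ is symmetric positive definite by part (a) of Lemma~\ref{lem:unreg-spectrum} together with $p(x)>1$. Therefore
\[
\kappa\bigl(D(x,\xi)\bigr)=\frac{\max\{1,p(x)-1\}}{\min\{1,p(x)-1\}}=\max\Bigl\{p(x)-1,\tfrac{1}{p(x)-1}\Bigr\}.
\]
The last equality follows by splitting into the cases $p(x)\geq2$ and $1<p(x)<2$.

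For $N=1$ we have $P_\xi=\Id=1$, so \eqref{eq:D-unreg-projection} reduces to $D=\abs{\xi}^{p(x)-2}(1+p(x)-2)$, which is \eqref{eq:N1-D}. A positive $1\times1$ matrix has condition number one, which gives \eqref{eq:kappa-N1}.

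I do not expect a real obstacle here. The only points needing care are the dimension split, since sharpness of both constants uses a nonzero vector in $\xi^\perp$, and the observation that in $N=1$ only one of the two inequalities in \eqref{eq:unreg-bounds} is attained unless $p(x)=2$.
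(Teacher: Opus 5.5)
Your proposal is correct and follows essentially the same route as the paper: the radial--tangential splitting of the quadratic form, sharpness by testing on $\Span\{\xi\}$ and $\xi^\perp$, the condition number as the ratio of the extreme eigenvalues with a split at $p(x)=2$, and the scalar reduction for $N=1$. The paper only spells out the sharpness step separately in the three cases $p(x)>2$, $1<p(x)<2$, and $p(x)=2$, which your uniform argument already covers.
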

\begin{proof}
$(a)$ Write
$\eta=\eta_\parallel+\eta_\perp$ with
$\eta_\parallel=P_\xi\eta$ and
$\eta_\perp=(\Id-P_\xi)\eta$.  By orthogonality and Lemma~\ref{lem:unreg-spectrum},
\begin{equation}\label{eq:unreg-quadratic-exact}
\eta^\top D(x,\xi)\eta=\abs{\xi}^{p(x)-2}\left((p(x)-1)\abs{\eta_\parallel}^2+\abs{\eta_\perp}^2\right).
\end{equation}
Since
$\abs{\eta}^2=\abs{\eta_\parallel}^2+\abs{\eta_\perp}^2$,
placing the smaller of $ 1$ and $p(x)-1 $ below both coefficients and the larger above both coefficients proves \eqref{eq:unreg-bounds}.

$(b)$ If $N\geq2$, a nonzero radial vector and a nonzero tangential vector both exist.  Equality in the two sides of \eqref{eq:unreg-bounds} is attained by choosing whichever of these directions corresponds to the smaller or larger eigenvalue.  Thus the constants are sharp, and the ratio of the larger eigenvalue to the smaller is \eqref{eq:kappa-unreg}. To see this, we distinguish the three cases. \\
\emph{Case 1: $p(x)>2$.} Then $\min\{1,p(x)-1\}=1$ and $ \max\{1,p(x)-1\}= p(x)-1$. Choose a nonzero vector $\eta\in\xi^\perp$. Then
\begin{equation}\label{eq:unreg-quadratic-exact-a}
\eta_\parallel=0, \qquad \eta_\perp=\eta.
\end{equation}

Since \(\eta_\parallel\) belongs to the radial invariant subspace, Lemma \ref{lem:unreg-spectrum} gives
\begin{equation}\label{eq:theorem43-D-parallel}
D(x,\xi)\eta_\parallel = \mu_\parallel(x,\xi)\eta_\parallel. 
\end{equation}
Similarly, because \(\eta_\perp\in\xi^\perp\),
\begin{equation}\label{eq:theorem43-D-perpendicular}
D(x,\xi)\eta_\perp = \mu_\perp(x,\xi)\eta_\perp. 
\end{equation}
Using the linearity of the matrix \(D(x,\xi)\), we obtain
\begin{align}\label{eq:theorem43-D-general-vector}
D(x,\xi)\eta = D(x,\xi)(\eta_\parallel+\eta_\perp) =D(x,\xi)\eta_\parallel + D(x,\xi)\eta_\perp= \mu_\parallel\eta_\parallel + \mu_\perp\eta_\perp. 
\end{align}
For readability, the dependence of the eigenvalues on \((x,\xi)\) is temporarily suppressed in the last line.\\
The quadratic form associated with \(D(x,\xi)\) in the direction \(\eta\) is
\[
\eta^TD(x,\xi)\eta.
\]
Using
\[
\eta=\eta_\parallel+\eta_\perp
\]
and \eqref{eq:theorem43-D-general-vector}, we obtain
\begin{align}\label{eq:theorem43-quadratic-expanded}
\eta^TD(x,\xi)\eta = (\eta_\parallel+\eta_\perp)^T \left( \mu_\parallel\eta_\parallel + \mu_\perp\eta_\perp \right)
= \mu_\parallel \eta_\parallel^T\eta_\parallel + \mu_\perp \eta_\parallel^T\eta_\perp  + \mu_\parallel \eta_\perp^T\eta_\parallel + \mu_\perp \eta_\perp^T\eta_\perp. 
\end{align} Because the parallel and perpendicular components are orthogonal,
\[
\eta_\parallel^T\eta_\perp = \eta_\parallel\cdot\eta_\perp = 0\quad \text{and}\quad\eta_\perp^T\eta_\parallel=0.
\]
Therefore, both cross terms in \eqref{eq:theorem43-quadratic-expanded} vanish. We are left with
\begin{equation}
\eta^TD(x,\xi)\eta = \mu_\parallel|\eta_\parallel|^2 + \mu_\perp|\eta_\perp|^2. \label{eq:theorem43-spectral-quadratic-form}
\end{equation}
Using \eqref{eq:theorem43-spectral-quadratic-form} and \eqref{eq:unreg-quadratic-exact-a}, we obtain
\[
\eta^TD(x,\xi)\eta = |\xi|^{p(x)-2}|\eta|^2.
\]
Thus equality holds in the lower bound. Therefore, the lower constant $1$ cannot be increased.\\
Now choose a nonzero vector $\eta\in\operatorname{span}\{\xi\}$. Then
\[
\eta_\parallel=\eta, \quad \eta_\perp=0.
\]
Therefore,
\[
\eta^TD(x,\xi)\eta = (p(x)-1)|\xi|^{p(x)-2}|\eta|^2.
\]
Thus equality holds in the upper bound. Therefore, the upper constant $p(x)-1$ cannot be decreased.\\

\emph{Case 2: $1<p(x)<2$.} Then
\[
\min\{1,p(x)-1\}=p(x)-1 \quad \text{and} \quad \max\{1,p(x)-1\}=1.
\]
Choose a nonzero vector $\eta\in\operatorname{span}\{\xi\}$. Then
\[
\eta^TD(x,\xi)\eta = (p(x)-1)|\xi|^{p(x)-2}|\eta|^2.
\]
Hence equality holds in the lower bound, so the lower constant $p(x)-1$ cannot be increased. Choose instead a nonzero vector $\eta\in\xi^\perp$. Then
\[
\eta^TD(x,\xi)\eta = |\xi|^{p(x)-2}|\eta|^2.
\] Hence equality holds in the upper bound, so the upper constant $1$ cannot be decreased. \\

\emph{Case 3: $p(x)=2$.} In this case the two displayed eigenvalue factors coincide at $1$. Thus the diffusion matrix is $D(x,\xi)= I$,  which leads to
\[
\eta^TD(x,\xi)\eta = \eta^TI\eta = |\eta|^2.
\]
Both the lower and upper bounds are equalities for every $\eta\in\mathbb R^N$. Thus, for $N\geq2$, the constants in the lemma are sharp in every exponent regime.\\

From Lemma \ref{lem:unreg-spectrum}, the two possible eigenvalues are
\[
\mu_\parallel = (p(x)-1)|\xi|^{p(x)-2} \quad \text{and} \quad \mu_\perp = |\xi|^{p(x)-2}.
\]
Because the common factor $|\xi|^{p(x)-2}$ is strictly positive whenever $\xi \neq 0$, comparing the two eigenvalues is equivalent to comparing \(p(x)-1\) and \(1\). Therefore, the smallest eigenvalue is
\begin{equation}\label{eq:theorem43-min-eigenvalue}
\lambda_{\min}\bigl(D(x,\xi)\bigr) = \min\{1,p(x)-1\}|\xi|^{p(x)-2}, 
\end{equation}
and the largest eigenvalue is
\begin{equation}\label{eq:theorem43-max-eigenvalue}
\lambda_{\max}\bigl(D(x,\xi)\bigr) = \max\{1,p(x)-1\}|\xi|^{p(x)-2}. 
\end{equation}
Since $p(x)>1$ and $\xi\neq0$, both eigenvalues are strictly positive. Thus $D(x,\xi)$ is positive definite at every point where $\xi\neq0$.\\

Since $D(x,\xi)$ is a symmetric positive-definite matrix, the corresponding spectral condition number is
\[
\kappa(D(x,\xi)) = \frac{ \lambda_{\max}(D(x,\xi)) }{ \lambda_{\min}(D(x,\xi)) }.
\]
Using \eqref{eq:theorem43-min-eigenvalue} and \eqref{eq:theorem43-max-eigenvalue}, we obtain
\begin{align}\label{eq:theorem43-condition-first}
\kappa(D(x,\xi)) &= \frac{ \max\{1,p(x)-1\}|\xi|^{p(x)-2} }{ \min\{1,p(x)-1\}|\xi|^{p(x)-2} }. 
\end{align}
Since \(\xi\neq0\), $|\xi|^{p(x)-2}>0$, so this common factor cancels to
\begin{equation}\label{eq:theorem43-condition-second}
\kappa(D(x,\xi)) = \frac{ \max\{1,p(x)-1\} }{ \min\{1,p(x)-1\} }. 
\end{equation}
We can simplify this expression by considering two cases $p(x)\geq2$ and $1 <p(x)<2$ from which \eqref{eq:kappa-unreg} follows.\\
$(c)$ If $N= 1$, $\xi^\perp=\{0\}$, and hence, only the radial eigendirection exists. Thus, the matrix is the scalar in \eqref{eq:N1-D}, whose sole singular value divided by itself equals one.
\end{proof}

\begin{remark}\label{rem:anisotropy-scale}
For $N\geq2$, the radial-to-tangential eigenvalue ratio is exactly
\begin{equation}\label{eq:ratio-pminus1}
\frac{\mu_\parallel}{\mu_\perp}=p(x)-1.
\end{equation}
This ratio records shape, not speed.  The common factor $\abs{\xi}^{p(x)-2}$ controls the absolute diffusion scale and may tend to zero or infinity even when the ratio remains bounded.  This distinction is decisive whenever the profile-frozen diffusion is used to study its stochastic time scale.
\end{remark}
The following classical expansion is used to identify the coefficients of the stochastic construction (see Remark \ref{rem:5.2}); its proof is included for completeness. Note also that because the pointwise unregularized formulae \eqref{eq:unreg-expansion}-\eqref{eq:unreg-expanded-scalar} may be singular or undefined on the critical set $\{x \in\Omega: \nabla u(x)=0\} $, the regularized counterpart \eqref{eq:reg-expansion}-\eqref{eq:reg-expanded-scalar} used in the stochastic construction is stated and proved separately.
\begin{lemma}\label{thm:unreg-expansion}
Under Assumption \ref{ass:classical-deterministic}, at every point $x$ with $\nabla u(x)\neq0$,
\begin{equation}\label{eq:unreg-expansion}
 \mathrm{div} A(x,\nabla u)=D(x,\nabla u)\fro D^2u+c(x,\nabla u)\cdot\nabla u,
\end{equation}
where
\begin{equation}\label{eq:c-unreg}
c(x,\xi)=\abs{\xi}^{p(x)-2}\log\abs{\xi}\,\nabla p(x) \quad\xi\neq 0.
\end{equation}
Equivalently,
\begin{align}\label{eq:unreg-expanded-scalar}
\mathrm{div} A(x,\nabla u)=\abs{\nabla u}^{p(x)-2}\Bigl[\Delta u+(p(x)-2)\frac{\ip{D^2u\nabla u}{\nabla u}}{\abs{\nabla u}^2}+\log\abs{\nabla u}\,\nabla p\cdot\nabla u\Bigr].
\end{align}
\end{lemma}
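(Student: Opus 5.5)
The plan is to apply the chain rule \eqref{eq:framework-chain-rule}, written for the unregularized flux $A$, at a point $x_0$ with $\nabla u(x_0)\neq0$. Since $u\in C^2(\Omega)$, the gradient $\nabla u$ is continuous. So there is a neighborhood $B$ of $x_0$ on which $\nabla u\neq0$. On the open set $\{(x,\xi):x\in\Omega,\ \xi\neq0\}$ the map $(x,\xi)\mapsto A(x,\xi)=\exp\bigl((p(x)-2)\log\abs{\xi}\bigr)\xi$ is $C^1$, because $p\in C^1(\Omega)$ and $\log\abs{\xi}$ is smooth away from $\xi=0$. The composition $x\mapsto A(x,\nabla u(x))$ is therefore $C^1$ on $B$. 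Its divergence then splits into the $\xi$-derivative part and the explicit $x$-derivative part, exactly as in \eqref{eq:framework-chain-rule}.

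For the gradient part, I will use Lemma~\ref{lem:unreg-spectrum}(a). It gives $\sum_{i,j}(\partial_{\xi_j}A_i)(x,\nabla u)u_{x_jx_i}=D(x,\nabla u)\fro D^2u$, with $D^2u$ symmetric by the $C^2$ hypothesis. For the explicit spatial part, I hold $\xi$ fixed and differentiate $\abs{\xi}^{p(x)-2}=e^{(p(x)-2)\log\abs{\xi}}$ in $x_i$. This gives $\partial_{x_i}A_i(x,\xi)=\abs{\xi}^{p(x)-2}\log\abs{\xi}\,\partial_{x_i}p(x)\,\xi_i$. Summing over $i$ yields $c(x,\xi)\cdot\xi$ with $c$ as in \eqref{eq:c-unreg}, and evaluating at $\xi=\nabla u(x)$ proves \eqref{eq:unreg-expansion}.

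For the scalar form \eqref{eq:unreg-expanded-scalar}, I substitute \eqref{eq:D-unreg-projection}:
\[
D(x,\nabla u)\fro D^2u=\abs{\nabla u}^{p(x)-2}\Bigl(\Id\fro D^2u+(p(x)-2)P_{\nabla u}\fro D^2u\Bigr).
\]
Here $\Id\fro D^2u=\Delta u$, and $(\xi\otimes\xi)\fro H=\sum_{i,j}\xi_i\xi_jH_{ij}=\ip{H\xi}{\xi}$. Factoring $\abs{\nabla u}^{p(x)-2}$ out of the logarithmic term as well then gives the bracketed expression.

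The only delicate step is justifying the joint $C^1$ regularity, so that the multivariable chain rule applies. This is why the argument must stay on the noncritical set: there $\log\abs{\xi}$ and the power $\abs{\xi}^{p(x)-2}$ are smooth. Everything else is a direct computation.
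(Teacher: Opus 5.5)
Your proposal is correct and follows the paper's proof essentially step for step. Both apply the chain rule to split $\mathrm{div} A(x,\nabla u)$ into the gradient-derivative part $D(x,\nabla u)\fro D^2u$ and the explicit spatial part $c(x,\nabla u)\cdot\nabla u$, then contract the explicit form of $D$ with $D^2u$; your added argument that $A$ is jointly $C^1$ near a noncritical point spells out the regularity the paper leaves implicit.
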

\begin{proof}
Writing $A_i(x,\nabla u(x))=|\nabla u|^{p(x)-2}u_{x_i}$ and then applying the chain rule gives
\[
\frac{\partial}{\partial x_i}A_i(x,\nabla u(x))=\left.\frac{\partial A_i}{\partial x_i}(x,\xi)\right|_{\xi=\nabla u}+\sum_{j=1}^N\frac{\partial A_i}{\partial\xi_j}(x,\nabla u)u_{x_jx_i}.
\]
Summing over $ i$, the second term is
\[
\sum_{i,j}D_{ij}(x,\nabla u)u_{x_ix_j}=D(x,\nabla u):D^2u.
\]
Since
\[
\frac{\partial}{\partial x_i}|\xi|^{p(x)-2}=|\xi|^{p(x)-2}\log|\xi|\,p_{x_i}(x),
\]
we obtain
\[
\left.\frac{\partial A_i}{\partial x_i}(x,\xi)\right|_{\xi=\nabla u}=|\nabla u|^{p(x)-2}\log|\nabla u|p_{x_i}u_{x_i}.
\]
Summing over $i$ yields $c(x,\nabla u)\cdot\nabla u$. This proves \eqref{eq:unreg-expansion}.  Substituting \eqref{eq:D-unreg} and contracting gives
\[
 D(x,\nabla u)\fro D^2u=\abs{\nabla u}^{p(x)-2}\Delta u+(p(x)-2)\abs{\nabla u}^{p(x)-4}(D^2u\,\nabla u)\cdot\nabla u ,
\]
which along with the term $c(x,\nabla u)\cdot\nabla u$ provides \eqref{eq:unreg-expanded-scalar}.
\end{proof}
\begin{remark}\label{rem:5.2}
The unregularized fields $D(x,\nabla u)$ and $c(x,\nabla u)$ identify the second-order and first-order terms away from critical points. The stochastic construction uses their regularized
counterparts $D_\varepsilon(x,\nabla u)$ and $c_\varepsilon(x,\nabla u)$ as the second-order coefficient and drift, respectively.  
\end{remark}

\section{Regularized tangent geometry and frozen coefficients}\label{sec:regularization}

Recall
\begin{equation*}\label{eq:r-eps}
r_\varepsilon(\xi)=\sqrt{\varepsilon^2+\abs{\xi}^2},\quad A_\varepsilon(x,\xi)=r_\varepsilon(\xi)^{p(x)-2}\xi.
\end{equation*}
We write  $r_\varepsilon$ when its argument is clear.  Since $r_\varepsilon(\xi)\geq\varepsilon>0$, all coefficient formulas below are defined at $\xi =0$. We now present the derivative and spectrum of the regularized flux.

\begin{lemma}\label{lem:reg-spectrum}
Let $\varepsilon>0$, fix $x\in\Omega$ with $p(x)>1$, and let $\xi\in\mathbb{R}^N$.
Then
\begin{equation}\label{eq:D-eps}
 D_\varepsilon(x,\xi):=D_\xi A_\varepsilon(x,\xi)=r_\varepsilon(\xi)^{p(x)-2}\Id+(p(x)-2)r_\varepsilon(\xi)^{p(x)-4}\xi\otimes\xi.
\end{equation}
If $N\geq2$ and $\xi\neq 0$, the tangential eigenvalue on $\xi^\perp$ is
\begin{equation}\label{eq:mu-perp-eps}
\mu_{\perp,\varepsilon}=r_\varepsilon(\xi)^{p(x)-2}.
\end{equation}
For every $N\geq1$ and $\xi\neq 0$, the radial eigenvalue on $\Span\{\xi\}$ is
\begin{align}
\mu_{\parallel,\varepsilon} &=r_\varepsilon(\xi)^{p(x)-2}\gamma_\varepsilon(x,\xi),\label{eq:mu-parallel-eps}\\
\gamma_\varepsilon(x,\xi)&:=1+(p(x)-2)\theta_\varepsilon(\xi),
 \quad
\theta_\varepsilon(\xi):=\frac{\abs{\xi}^2}{\varepsilon^2+\abs{\xi}^2}.
 \label{eq:gamma-theta}
\end{align}
At $\xi=0$,
\begin{equation}\label{eq:D-eps-zero}
 D_\varepsilon(x,0)=\varepsilon^{p(x)-2}\Id.
\end{equation}
For $\xi\neq0$, the radial and tangential subspaces are invariant. When $N\geq2$ and $p(x)\neq 2$, they are the full eigenspaces, with multiplicities $1$ and $N-1$.  If $p(x)= 2$, then $D_\varepsilon=\Id$ and its sole eigenvalue is $1$, of multiplicity $N$.  If $N=1$, only the radial eigenvalue is present. At $\xi=0$, the scalar $\varepsilon^{p(x)-2}$ is the sole eigenvalue and has multiplicity $N$.
\end{lemma}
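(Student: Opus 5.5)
The plan mirrors the proof of Lemma~\ref{lem:unreg-spectrum}, with $\abs{\xi}$ replaced by $r_\varepsilon(\xi)$ in the scalar prefactor.

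\emph{Derivative.} Fix $x$ and regard $A_{\varepsilon,i}(\xi)=r_\varepsilon(\xi)^{p(x)-2}\xi_i$ as a function of $\xi$ alone. Since $r_\varepsilon\ge\varepsilon>0$, the map $\xi\mapsto r_\varepsilon(\xi)^{p(x)-2}$ is smooth on all of $\mathbb{R}^N$, including $\xi=0$. Its partial derivatives are
\[
\partial_{\xi_j}r_\varepsilon^{p(x)-2}=(p(x)-2)r_\varepsilon^{p(x)-3}\,\partial_{\xi_j}r_\varepsilon=(p(x)-2)r_\varepsilon^{p(x)-4}\xi_j,
\]
using $\partial_{\xi_j}r_\varepsilon=\xi_j/r_\varepsilon$. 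The product rule then yields \eqref{eq:D-eps}. Symmetry is immediate, because $\xi\otimes\xi$ is symmetric; equivalently, $D_\varepsilon=D_\xi^2\Phi_\varepsilon$ by \eqref{eq:framework-energy-derivatives}. Setting $\xi=0$ kills the rank-one term and gives \eqref{eq:D-eps-zero}. Hence $D_\varepsilon(x,0)$ is a positive multiple of $\Id$, with sole eigenvalue $\varepsilon^{p(x)-2}$ of multiplicity $N$.

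\emph{Spectrum for $\xi\neq0$.} Write $\xi\otimes\xi=\abs{\xi}^2P_\xi$ and factor out the scalar:
\[
D_\varepsilon(x,\xi)=r_\varepsilon^{p(x)-2}\bigl(\Id+(p(x)-2)\theta_\varepsilon(\xi)P_\xi\bigr),\qquad \theta_\varepsilon(\xi)=\abs{\xi}^2/r_\varepsilon^2.
\]
Applying this to $\eta\in\Span\{\xi\}$, where $P_\xi\eta=\eta$, gives $\mu_{\parallel,\varepsilon}=r_\varepsilon^{p(x)-2}\gamma_\varepsilon$, which is \eqref{eq:mu-parallel-eps}--\eqref{eq:gamma-theta}. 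Applying it to $\eta\perp\xi$, where $P_\xi\eta=0$, gives \eqref{eq:mu-perp-eps}. The orthogonal decomposition $\eta=P_\xi\eta+(\Id-P_\xi)\eta$ then shows that both subspaces are invariant and that
\[
D_\varepsilon=\mu_{\parallel,\varepsilon}P_\xi+\mu_{\perp,\varepsilon}(\Id-P_\xi).
\]
For $N=1$ we have $\xi^\perp=\{0\}$, so only the radial eigenvalue is present.

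\emph{Multiplicities and the case $p(x)=2$.} The two eigenvalues differ exactly when $(p(x)-2)\theta_\varepsilon\neq0$. For $\xi\neq0$ we have $\theta_\varepsilon\in(0,1)$, so they are distinct precisely when $p(x)\neq2$. In that case, for $N\ge2$, the invariant subspaces are the full eigenspaces, of dimensions $1$ and $N-1$. When $p(x)=2$ the exponent $p(x)-2$ vanishes, so $r_\varepsilon^0=1$ and the correction term disappears; thus $D_\varepsilon=\Id$ for every $\xi$. One should also note that $\gamma_\varepsilon\ge\min\{1,p(x)-1\}>0$, since $\gamma_\varepsilon$ is a convex combination of $1$ and $p(x)-1$. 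This positivity is not required by the statement but is worth recording for later use.

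No step is a genuine obstacle. The only points needing care are two bookkeeping details. First, the identity $\xi\otimes\xi=\abs{\xi}^2P_\xi$ may be used only for $\xi\neq0$, since no $P_0$ is assigned, so the case $\xi=0$ must be treated directly from \eqref{eq:D-eps}. Second, distinctness of the eigenvalues relies on $\theta_\varepsilon>0$ whenever $\xi\neq0$.
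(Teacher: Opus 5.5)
Your proposal is correct and follows essentially the same route as the paper. Both obtain the derivative from the chain rule with $\partial_{\xi_j}r_\varepsilon=\xi_j/r_\varepsilon$, read off the eigenvalues on $\Span\{\xi\}$ and $\xi^\perp$, get distinctness from $\theta_\varepsilon>0$, and substitute directly for $\xi=0$ and $p(x)=2$. Rewriting $\xi\otimes\xi=\abs{\xi}^2P_\xi$ before factoring is only a cosmetic difference from the paper's direct computation $(\xi\otimes\xi)\eta=\abs{\xi}^2\eta$.
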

\begin{proof}
Because $\partial_{\xi_j}r_\varepsilon=\xi_j/r_\varepsilon$, the product and chain rules applied to $A_{\varepsilon,i}=r_\varepsilon^{p(x)-2}\xi_i$ give
\[
\partial_{\xi_j}A_{\varepsilon,i}=r_\varepsilon^{p(x)-2}\delta_{ij}+(p(x)-2)r_\varepsilon^{p(x)-4}\xi_i\xi_j,
\]
which is \eqref{eq:D-eps}.  If $N\geq2$, $\xi\neq0$, and $0\neq\eta\in\xi^\perp$, the tensor term vanishes and gives
\eqref{eq:mu-perp-eps}.
If $\eta\parallel\xi$, then $(\xi\otimes\xi)\eta=|\xi|^2 \eta$, giving
\[
D_\varepsilon \eta=\left(r_\varepsilon^{p(x)-2}+(p(x)-2)r_\varepsilon^{p(x)-4}|\xi|^2\right)\eta.
\]
Factoring $r_\varepsilon^{p(x)-2}$ proves \eqref{eq:mu-parallel-eps}. For $\xi\neq 0$, $\theta_\varepsilon(\xi)>0$, so the radial and tangential factors are distinct exactly when $p(x)\neq2$.  Orthogonal decomposition then gives the stated multiplicities when $N\geq2$; when $N=1$, the radial subspace is all of $\mathbb{R}$.  If $p(x)=2$, \eqref{eq:D-eps} directly gives $D_\varepsilon=\Id$.
Finally, setting $\xi=0$ in \eqref{eq:D-eps} gives \eqref{eq:D-eps-zero}, including its multiplicity assertion.
\end{proof}

\begin{remark}
At $\xi=0$, formula \eqref{eq:D-eps-zero} shows that the matrix is a positive scalar multiple of $\Id$, so every direction has the same eigenvalue. Thus regularization not only prevents vanishing or blow-up for fixed $\varepsilon$, it also removes directional anisotropy exactly at $\xi=0$; that is, there is no distinguished radial direction at $\xi=0$.
\end{remark}

We next establish explicit ellipticity estimates on bounded gradient ranges.\\

For $M>0$, set
\begin{equation}\label{eq:R-eps-M}
 R_{\varepsilon,M}=\sqrt{\varepsilon^2+M^2}.
\end{equation}
Define the bounds
\begin{align}
m_{\varepsilon,M}&=\min\left\{\varepsilon^{p^--2},\varepsilon^{p^+-2},R_{\varepsilon,M}^{p^--2},R_{\varepsilon,M}^{p^+-2}\right\},\label{eq:m-eps-M}\\
 M_{\varepsilon,M}&=\max\left\{\varepsilon^{p^--2},\varepsilon^{p^+-2},R_{\varepsilon,M}^{p^--2},R_{\varepsilon,M}^{p^+-2}\right\},\label{eq:M-eps-M}
\end{align}
and
\begin{equation}\label{eq:p-factors}
\underline q=\min\{1,p^--1\},
 \quad
\overline q=\max\{1,p^+-1\}.
\end{equation}

\begin{lemma}\label{thm:reg-uniform}
Assume Assumption \ref{ass:exponent-range}, fix $\varepsilon>0$ and $M>0$, and let $\abs{\xi}\leq M$. Then, for every $x\in\Omega$ and $\eta\in\mathbb{R}^N$,
\begin{equation}\label{eq:uniform-explicit}
\alpha_{\varepsilon,M}\abs{\eta}^2\leq \eta^\top D_\varepsilon(x,\xi)\eta\leq \beta_{\varepsilon,M}\abs{\eta}^2,
\end{equation}
where
\begin{equation}\label{eq:alpha-beta-explicit}
\alpha_{\varepsilon,M}=\underline q\,m_{\varepsilon,M}>0,
 \quad
 \beta_{\varepsilon,M}=\overline q\,M_{\varepsilon,M}<\infty.
\end{equation}
\end{lemma}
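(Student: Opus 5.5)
The plan is to read the quadratic form off the spectral decomposition in Lemma~\ref{lem:reg-spectrum} and then bound two scalar factors separately: the power $r_\varepsilon(\xi)^{p(x)-2}$ and the radial shape factor $\gamma_\varepsilon(x,\xi)$.

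\textbf{Step 1: exact quadratic form.} For $\xi\neq0$, decompose $\eta=\eta_\parallel+\eta_\perp$ as in the proof of Lemma~\ref{thm:unreg-ellipticity}. Orthogonality and \eqref{eq:mu-perp-eps}--\eqref{eq:gamma-theta} give
\[
\eta^\top D_\varepsilon(x,\xi)\eta=r_\varepsilon(\xi)^{p(x)-2}\bigl(\gamma_\varepsilon(x,\xi)\abs{\eta_\parallel}^2+\abs{\eta_\perp}^2\bigr).
\]
For $\xi=0$, formula \eqref{eq:D-eps-zero} gives $r_\varepsilon^{p(x)-2}\abs{\eta}^2$ directly. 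This is consistent with the display, since $\gamma_\varepsilon(x,0)=1$ and the decomposition is not needed there.

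\textbf{Step 2: the shape factor.} Since $\theta_\varepsilon(\xi)\in[0,1)$, the number $\gamma_\varepsilon=1+(p(x)-2)\theta_\varepsilon$ is a convex combination of $1$ and $p(x)-1$. It therefore lies between $\min\{1,p(x)-1\}$ and $\max\{1,p(x)-1\}$. Using $p^-\le p(x)\le p^+$, this gives $\underline q\le\gamma_\varepsilon\le\overline q$. The tangential coefficient $1$ also lies in $[\underline q,\overline q]$. Hence the bracket in Step 1 lies between $\underline q\abs{\eta}^2$ and $\overline q\abs{\eta}^2$.

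\textbf{Step 3: the scalar power.} Here $\varepsilon\le r_\varepsilon(\xi)\le R_{\varepsilon,M}$ because $\abs{\xi}\le M$. Consider the function $(r,s)\mapsto r^{s-2}=e^{(s-2)\log r}$ on the rectangle $[\varepsilon,R_{\varepsilon,M}]\times[p^-,p^+]$. It is monotone in $s$ for fixed $r$, and monotone in $r$ for fixed $s$. So its extrema over the rectangle are attained at the four corners.

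More concretely, for fixed $r$ we have $r^{p(x)-2}\in[\min(r^{p^--2},r^{p^+-2}),\max(\cdots)]$. Each of $r^{p^\pm-2}$ is monotone in $r$, so it lies between its values at $r=\varepsilon$ and $r=R_{\varepsilon,M}$. This yields $m_{\varepsilon,M}\le r_\varepsilon^{p(x)-2}\le M_{\varepsilon,M}$.

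\textbf{Step 4: conclusion.} Multiplying the nonnegative bounds from Steps 2 and 3 gives \eqref{eq:uniform-explicit}. Positivity $\alpha_{\varepsilon,M}>0$ holds because every corner value is positive, since $\varepsilon>0$, and because $\underline q>0$, since $p^->1$. Finiteness of $\beta_{\varepsilon,M}$ is immediate.

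The only step needing care is Step 3. The sign of $\log r$ can change across $r=1$ inside $[\varepsilon,R_{\varepsilon,M}]$, and that is precisely why all four corners appear in the definitions \eqref{eq:m-eps-M}--\eqref{eq:M-eps-M}. The two-stage monotonicity argument, first in $s$ and then in $r$, handles this without any case split.
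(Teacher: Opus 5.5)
Your proposal is correct and follows essentially the same route as the paper. It uses the convex-combination bound $\underline q\le\gamma_\varepsilon\le\overline q$, the two-stage monotonicity argument placing the extrema of $r^{q-2}$ at the four corners of $[p^-,p^+]\times[\varepsilon,R_{\varepsilon,M}]$, and the radial--tangential splitting of the quadratic form. The paper computes that splitting directly from \eqref{eq:D-eps} rather than quoting Lemma~\ref{lem:reg-spectrum}, and it handles $\xi=0$ and $N=1$ just as you do.
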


\begin{proof}
For $\xi\neq0$, the radial eigenvalue has the form below; the displayed scalar factors also extend to $\xi=0$. Since $0\leq\theta_\varepsilon(\xi)=|\xi|^2/(\varepsilon^2+|\xi|^2)<1$, we have
\begin{equation}\label{eq:lemma62-radial-factorization}
\mu_{\parallel,\varepsilon}(x,\xi) = r_\varepsilon(\xi)^{p(x)-2} \gamma_{\varepsilon}(x,\xi),
\end{equation}
where
\begin{equation}\label{eq:lemma62-B-definition}
\gamma_{\varepsilon}(x,\xi) := 1+(p(x)-2)\theta_\varepsilon(\xi).
\end{equation}
The identity
\[
 \gamma_\varepsilon(x,\xi)=(1-\theta_\varepsilon(\xi))\,1+\theta_\varepsilon(\xi)\,(p(x)-1)
\]
expresses $\gamma_\varepsilon$ as a convex combination of $1$ and $p(x)-1$. Consequently,
\[
\min\{1,p(x)-1\}\leq\gamma_\varepsilon(x,\xi)\leq\max\{1,p(x)-1\}.
\]
Using $p^-\leq p(x)\leq p^+$ gives
\begin{equation}\label{eq:lemma62-B-definition-a}
\underline q \leq \gamma_{\varepsilon}(x,\xi) \leq \overline q,
\end{equation}
for every  $x\in\Omega$ and $|\xi|\leq M$. Define the scalar function
\begin{equation} \label{eq:lemma62-F-definition}
F(q,r):=r^{q-2},
\end{equation}
with $r:=r_\varepsilon(\xi)$ and $q:=p(x)\in[p^-,p^+]$. Since $\varepsilon\leq r_{\varepsilon}(\xi)\leq R_{\varepsilon,M}$, the pair $(q, r)$ belongs to the rectangle
\begin{equation}\label{eq:lemma62-compact-rectangle}
K_{\varepsilon,M} = [p^-,p^+] \times [\varepsilon,R_{\varepsilon,M}].
\end{equation}
Since $F$ is continuous on the compact set $K_{\varepsilon,M}$, it attains a minimum and a maximum on this rectangle
\begin{equation}\label{eq:lemma62-m-definition}
m_{\varepsilon,M} := \min_{ \substack{ (q, r)\in K_{\varepsilon,M} } } r^{q-2} 
\end{equation}
and
\begin{equation} \label{eq:lemma62-M-definition}
M_{\varepsilon,M} := \max_{ \substack{ (q, r)\in K_{\varepsilon,M}} } r^{q-2}.
\end{equation}

For each fixed $r>0$, the map $q\mapsto r^{q-2}$ is monotone (or constant if $r=1$), so its extrema over $[p^-,p^+]$ occur at endpoints. For either endpoint $q$, the map $r\mapsto r^{q-2}$ is again monotone (or constant if $q=2$), so its extrema over $[\varepsilon,R_{\varepsilon,M}]$ occur at endpoints.  Thus $F$ assumes its extrema on the rectangle $K_{\varepsilon,M}$  among its four corners, even when the ranges cross $q=2$ or $r=1$.  In particular,

\begin{equation}\label{eq:lemma62-explicit-m}
m_{\varepsilon,M} = \min \left\{ \varepsilon^{p^--2}, \varepsilon^{p^+-2}, R_{\varepsilon,M}^{p^--2}, R_{\varepsilon,M}^{p^+-2} \right\}
\end{equation}
and
\begin{equation} \label{eq:lemma62-explicit-M}
M_{\varepsilon,M} = \max \left\{ \varepsilon^{p^--2}, \varepsilon^{p^+-2}, R_{\varepsilon,M}^{p^--2}, R_{\varepsilon,M}^{p^+-2} \right\}.
\end{equation}
Therefore
\begin{equation}\label{eq:lemma62-explicit-common-bounds}
0< m_{\varepsilon,M} \leq r_\varepsilon(\xi)^{p(x)-2} \leq M_{\varepsilon,M} <\infty.
\end{equation}

For this estimate write $\mu_{\perp,\varepsilon}=r_\varepsilon^{p(x)-2}$ and $\mu_{\parallel,\varepsilon}=r_\varepsilon^{p(x)-2}\gamma_\varepsilon$ for the two scalar factors, also at $\xi=0$.  The first factor is an actual tangential eigenvalue when $N\geq2$ and $\xi\neq0$; for $N=1$ it is only an auxiliary scalar. Using $0<\underline q\leq1\leq \overline q$,
\eqref{eq:lemma62-B-definition-a}, and \eqref{eq:lemma62-explicit-common-bounds}, we obtain

\begin{equation}\label{eq:lemma62-tangential-common-bounds}
\alpha_{\varepsilon,M}\leq \mu_{\perp,\varepsilon}(x,\xi) \leq  \beta_{\varepsilon,M},
\end{equation}
and
\begin{equation}\label{eq:lemma62-tangential-common-bounds-a}
\alpha_{\varepsilon,M}\leq \mu_{\parallel,\varepsilon}(x,\xi) \leq  \beta_{\varepsilon,M},
\end{equation}
respectively. The eigenvalue bounds from \eqref{eq:lemma62-tangential-common-bounds} and \eqref{eq:lemma62-tangential-common-bounds-a} can now be summarized as
\begin{equation}\label{eq:lemma62-all-eigenvalue-bounds}
\alpha_{\varepsilon,M} \leq \mu \leq \beta_{\varepsilon,M} 
\end{equation}
for every eigenvalue $\mu$ of $D_\varepsilon(x,\xi)$, uniformly for $x\in\Omega,\,\,  |\xi|\leq M$.\\
Next suppose $\xi\neq0$ and convert these scalar bounds into quadratic-form bounds. We start to construct the regularized counterpart of \eqref{eq:theorem43-spectral-quadratic-form}; that is, we shall obtain
\begin{align} \label{eq:lemma62-quadratic-spectral}
\eta^TD_\varepsilon(x,\xi)\eta &= \mu_{\perp,\varepsilon} |\eta_\perp|^2 +\mu_{\parallel,\varepsilon} |\eta_\parallel|^2.
\end{align}
Starting from
\[
D_\varepsilon(x,\xi) =r_\varepsilon^{p(x)-2}I + (p(x)-2) r_\varepsilon^{p(x)-4} \xi\otimes\xi,
\]
we obtain
\begin{align*}
\eta^TD_\varepsilon(x,\xi)\eta = r_\varepsilon^{p(x)-2}|\eta|^2+ (p(x)-2) r_\varepsilon^{p(x)-4} (\xi\cdot\eta)^2.
\end{align*}
For $\xi\neq0$, let
\[
\eta_\parallel = \frac{\xi\cdot\eta}{|\xi|^2}\xi.
\]
Then
\[
|\eta_\parallel|^2 = \frac{(\xi\cdot\eta)^2}{|\xi|^2},
\]
so
\[
(\xi\cdot\eta)^2 = |\xi|^2|\eta_\parallel|^2.
\] Therefore,
\begin{align*}
\eta^TD_\varepsilon(x,\xi)\eta = r_\varepsilon^{p(x)-2}|\eta|^2 + (p(x)-2) r_\varepsilon^{p(x)-4} |\xi|^2|\eta_\parallel|^2.
\end{align*}
Using the identity
\[
|\eta|^2 = |\eta_\parallel|^2+|\eta_\perp|^2,
\]
we find
\begin{align*}
\eta^TD_\varepsilon(x,\xi)\eta = r_\varepsilon^{p(x)-2} |\eta_\perp|^2 + r_\varepsilon^{p(x)-2} \left( 1+ (p(x)-2) \frac{|\xi|^2} {\varepsilon^2+|\xi|^2} \right) |\eta_\parallel|^2,
\end{align*}
which is \eqref{eq:lemma62-quadratic-spectral}.\\
Since both  scalar factors are at least \(\alpha_{\varepsilon,M}\),
\begin{align}\label{eq:lemma62-quadratic-lower}
\eta^TD_\varepsilon(x,\xi)\eta &\geq \alpha_{\varepsilon,M} |\eta_\parallel|^2 + \alpha_{\varepsilon,M} |\eta_\perp|^2  =\alpha_{\varepsilon,M}|\eta|^2.
\end{align}
Similarly, both  scalar factors  are at most $\beta_{\varepsilon,M}$,
\begin{align} \label{eq:lemma62-quadratic-upper}
\eta^TD_\varepsilon(x,\xi)\eta &\leq \beta_{\varepsilon,M} |\eta_\parallel|^2 + \beta_{\varepsilon,M} |\eta_\perp|^2 = \beta_{\varepsilon,M}|\eta|^2.
\end{align}
These calculations also apply when $N=1$, since then $\eta_\perp=0$; they do not require a tangential eigenvalue. At $\xi=0$,
\[
D_\varepsilon(x,0) = \varepsilon^{p(x)-2}I,
\]
and the same inequalities follow directly from the common-factor bounds.\\
Therefore,
\[
\alpha_{\varepsilon,M}|\eta|^2 \leq \eta^TD_\varepsilon(x,\xi)\eta \leq \beta_{\varepsilon,M}|\eta|^2
\]
for every $x\in\Omega, \,\, |\xi|\leq M, \,\,\eta\in\mathbb R^N$. This proves uniform ellipticity of $D_\varepsilon(x,\xi)$ on bounded gradient ranges.
\end{proof}
\begin{remark}\label{rem:remark 64}
Since \eqref{eq:lemma62-all-eigenvalue-bounds} holds uniformly for $x\in\Omega,\, |\xi|\leq M$, all eigenvalues of $D_\varepsilon(x,\xi)$ are strictly positive. Thus $D_\varepsilon(x,\xi)$ belongs to the open cone of real symmetric positive-definite $N\times N$ matrices $\operatorname{Sym}_N^{++}$; that is, $D_\varepsilon(x,\xi) \in\operatorname{Sym}_N^{++}$.
\end{remark}

\begin{lemma}\label{prop:reg-limit}
Assume $1<p^-\leq p(x)\leq p^+<\infty$ and fix $x\in\Omega$.
\begin{enumerate}[label=(\alph*)]
\item If $N\geq2$ and $\xi\neq 0$, then
\begin{equation}\label{eq:kappa-eps}
\kappa(D_\varepsilon(x,\xi))=\max\{\gamma_\varepsilon(x,\xi),\gamma_\varepsilon(x,\xi)^{-1}\}.
\end{equation}
At $\xi=0$, and for every $\xi$ when $N=1$, the condition number is one.
\item Uniformly in $x$ and $\xi$,
\begin{equation}\label{eq:kappa-eps-global}
 \kappa(D_\varepsilon(x,\xi))\leq \max\left\{p^+-1,\frac1{p^--1}\right\}\quad(N\geq2).
\end{equation}
\item For each fixed $\xi\neq 0$,
\begin{equation}\label{eq:D-eps-limit}
 D_\varepsilon(x,\xi)\rightarrow D(x,\xi)\quad(\varepsilon\downarrow0).
\end{equation}
For every $0<\delta\leq M<\infty$, the convergence is uniform for all $x\in\Omega$ and $\delta\leq\abs{\xi}\leq M$. The stated exponent bounds alone do not ensure a finite extension
of this convergence across $\xi=0$.
\end{enumerate}
\end{lemma}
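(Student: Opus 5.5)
Parts (a) and (b) follow from the spectral description in Lemma~\ref{lem:reg-spectrum}, and part (c) from the explicit formula \eqref{eq:D-eps} together with the compactness argument used in Lemma~\ref{thm:reg-uniform}.

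For (a), take $N\geq2$ and $\xi\neq0$. By Lemma~\ref{lem:reg-spectrum} the spectrum of $D_\varepsilon(x,\xi)$ is $\{r_\varepsilon^{p(x)-2}\gamma_\varepsilon,\ r_\varepsilon^{p(x)-2}\}$, since both the radial and tangential subspaces are nonzero. Both eigenvalues are positive because $\gamma_\varepsilon\geq\min\{1,p(x)-1\}>0$, by the convex-combination identity in the proof of Lemma~\ref{thm:reg-uniform}. Dividing the larger eigenvalue by the smaller cancels the common factor $r_\varepsilon^{p(x)-2}$ and leaves $\max\{\gamma_\varepsilon,\gamma_\varepsilon^{-1}\}$. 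At $\xi=0$, \eqref{eq:D-eps-zero} gives a positive multiple of $\Id$, so $\kappa=1$. When $N=1$ the matrix is a positive scalar, so again $\kappa=1$.

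For (b), the case $\xi=0$ is covered by (a), since $1$ is at most the right-hand side of \eqref{eq:kappa-eps-global}. For $\xi\neq0$, the convex-combination bound gives $\min\{1,p(x)-1\}\leq\gamma_\varepsilon\leq\max\{1,p(x)-1\}$. Hence $\gamma_\varepsilon\leq\max\{1,p^+-1\}$ and $\gamma_\varepsilon^{-1}\leq\max\{1,1/(p^--1)\}$. Both are bounded by $\max\{p^+-1,1/(p^--1)\}$: if $p^+\geq2$ the first entry is at least $1$, and if $p^+<2$ then $p^-<2$, so $1/(p^--1)>1$.

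For (c), subtract \eqref{eq:D-unreg} from \eqref{eq:D-eps}:
\[
D_\varepsilon-D=\bigl(r_\varepsilon^{p(x)-2}-\abs{\xi}^{p(x)-2}\bigr)\Id+(p(x)-2)\bigl(r_\varepsilon^{p(x)-4}-\abs{\xi}^{p(x)-4}\bigr)\xi\otimes\xi.
\]
Pointwise convergence follows from continuity of $s\mapsto s^{q}$ on $(0,\infty)$, since $r_\varepsilon\to\abs{\xi}>0$. For uniformity, restrict to $\delta\leq\abs\xi\leq M$ and $\varepsilon\leq1$. Put $g(q,s)=s^{q}$ and use the mean value theorem in $s$ on $[\abs\xi,r_\varepsilon]\subset[\delta,\sqrt{1+M^2}]$. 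This gives
\[
\abs{r_\varepsilon^{q}-\abs\xi^{q}}\leq \sup\abs{\partial_s g}\,(r_\varepsilon-\abs\xi),
\]
where the supremum is taken over $q\in[p^--4,p^+-2]$ and $s\in[\delta,\sqrt{1+M^2}]$, a compact set on which $\partial_s g=qs^{q-1}$ is continuous. The increment satisfies $r_\varepsilon-\abs\xi=\varepsilon^2/(r_\varepsilon+\abs\xi)\leq\varepsilon^2/(2\delta)$. Multiplying by the bounded factors $\abs{p(x)-2}\leq p^++2$ and $\norm{\xi\otimes\xi}\leq M^2$ yields an operator-norm bound $C(\delta,M,p^\pm)\varepsilon^2$ that is uniform in $x$.

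The final sentence of (c) calls for a remark, not a proof. By \eqref{eq:D-eps-zero}, $D_\varepsilon(x,0)=\varepsilon^{p(x)-2}\Id$, which tends to $0$ if $p(x)>2$ and blows up if $p(x)<2$. Meanwhile $D(x,\xi)$ has no finite limit as $\xi\to0$ when $p(x)<2$, by Lemma~\ref{thm:unreg-ellipticity}.

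The main obstacle is keeping the constants in (c) uniform when $p(x)-2$ and $p(x)-4$ change sign. Compactness of the exponent range together with the lower bound $s\geq\delta$ handles this without any case split.
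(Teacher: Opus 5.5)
Your proof is correct. Parts (a) and (b) and the closing remark about $\xi=0$ follow the paper's argument essentially verbatim:
\begin{itemize}
\item the eigenvalue ratio $\mu_{\parallel,\varepsilon}/\mu_{\perp,\varepsilon}=\gamma_\varepsilon$;
\item the convex-combination bounds $\min\{1,p(x)-1\}\le\gamma_\varepsilon\le\max\{1,p(x)-1\}$;
\item the observation that one of $p^+-1$ and $1/(p^--1)$ is at least $1$;
\item the behaviour of $D_\varepsilon(x,0)=\varepsilon^{p(x)-2}\Id$.
\end{itemize}

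The genuine difference is in the uniform convergence of (c).

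\emph{The paper's route.} It treats $t=\varepsilon^2$ as an extra variable and writes $D_\varepsilon(x,\xi)=H(\varepsilon^2,p(x),\xi)$ and $D(x,\xi)=H(0,p(x),\xi)$. It then invokes uniform continuity of $H$ on the compact set $[0,1]\times[p^-,p^+]\times\{\delta\le\abs{\xi}\le M\}$. This handles both terms of the matrix at once without computing any derivative, but it gives no rate.

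\emph{Your route.} You subtract the two explicit formulas and apply the mean value theorem to $s\mapsto s^q$, with $q$ ranging over $[p^--4,p^+-2]$. You combine this with $r_\varepsilon-\abs{\xi}=\varepsilon^2/(r_\varepsilon+\abs{\xi})\le\varepsilon^2/(2\delta)$. This is slightly longer, but it yields the quantitative bound $\sup\norm{D_\varepsilon(x,\xi)-D(x,\xi)}_{\mathrm{op}}\le C(\delta,M,p^\pm)\,\varepsilon^2$ for $\varepsilon\le1$. The dependence of the constant on $\delta$, $M$ and the exponent bounds is explicit.

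Both arguments use only the range of $p$, not its continuity. Both rely on $\abs{\xi}\ge\delta$ to keep the base of the powers away from $0$, which is exactly why neither extends across the critical set.
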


\begin{proof}
$(a)$ For $N\geq2$ and $\xi\neq0$, the spectrum consists of $\mu_{\perp,\varepsilon}$ and $\mu_{\parallel,\varepsilon}=\gamma_\varepsilon\mu_{\perp,\varepsilon}$.  Their ratio gives \eqref{eq:kappa-eps}. At $\xi=0$, the matrix is a scalar multiple of the identity. In $N=1$, there is only one eigenvalue, so the condition number is one.\\
$(b)$ Formula \eqref{eq:lemma62-B-definition-a} gives
\[
\max\{\gamma_\varepsilon,\gamma_\varepsilon^{-1}\}\leq \max\{\overline q,\underline q^{-1}\}=\max\left\{1,p^+-1,\frac1{p^--1}\right\}.
\]
At least one of $p^+-1$ and $(p^--1)^{-1}$ is at least $1$: if $p^+\geq2$, the first has this property, while if $p^+<2$, then $p^-\leq p^+<2$ and the second has it. The extra $1$ may
therefore be omitted, proving \eqref{eq:kappa-eps-global}. At $\xi=0$ the condition number equals $1$ and obeys the same bound.\\
$(c)$
For fixed $\xi\neq 0$, $r_\varepsilon(\xi)\to\abs{\xi}$ and $r_\varepsilon(\xi)^{p(x)-4}\to\abs{\xi}^{p(x)-4}$, so \eqref{eq:D-eps} converges pointwise term by term to \eqref{eq:D-unreg}. 
For uniform convergence, define
\[
H(t,q,\xi)=(t+\abs{\xi}^2)^{(q-2)/2}\Id+(q-2)(t+\abs{\xi}^2)^{(q-4)/2}\xi\otimes\xi.
\]
This matrix-valued map is uniformly continuous on the compact set
\[
[0,1]\times[p^-,p^+]\times\{\xi\in\mathbb{R}^N:\delta\leq\abs{\xi}\leq M\},
\]
because $\delta>0$.  Moreover, $D_\varepsilon(x,\xi)=H(\varepsilon^2,p(x),\xi)$ and $D(x,\xi)=H(0,p(x),\xi)$. Hence
\[
\sup_{\substack{x\in\Omega\\\delta\leq\abs{\xi}\leq M}}\norm{D_\varepsilon(x,\xi)-D(x,\xi)}_{\mathrm{op}}\rightarrow0.
\]
No continuity of $p$ is needed for this conclusion. At $\xi=0$, however, $D_\varepsilon(x,0)=\varepsilon^{p(x)-2}\Id$, which tends to zero for $p(x)>2$, equals $\Id$ for $p(x)=2$, and diverges for $1<p(x)<2$.  A single uniform limit across the critical set $\{\xi \in\mathbb R^N: \xi=0\}$ is therefore impossible in general.
\end{proof}
\begin{lemma}\label{thm:reg-expansion}
Under Assumption \ref{ass:classical-deterministic}, at every $x\in\Omega$,
\begin{equation}\label{eq:reg-expansion}
\mathrm{div} A_\varepsilon(x,\nabla u)=c_\varepsilon(x,\nabla u)\cdot\nabla u+D_\varepsilon(x,\nabla u)\fro D^2u,
\end{equation}
where
\begin{equation}\label{eq:c-eps}
c_\varepsilon(x,\xi)=r_\varepsilon(\xi)^{p(x)-2}\log r_\varepsilon(\xi)\,\nabla p(x).
\end{equation}
Equivalently,
\begin{align}\label{eq:reg-expanded-scalar}
\mathrm{div} A_\varepsilon(x,\nabla u)=r_\varepsilon(\nabla u)^{p(x)-2}\Bigg[&\Delta u+(p(x)-2)\frac{\ip{D^2u\nabla u}{\nabla u}}
{\varepsilon^2+\abs{\nabla u}^2}+\log r_\varepsilon(\nabla u)\,\nabla p\cdot\nabla u\Bigg].
\end{align}
\end{lemma}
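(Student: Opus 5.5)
The plan mirrors the proof of Lemma~\ref{thm:unreg-expansion}, with the simplification that $r_\varepsilon(\xi)\geq\varepsilon>0$ removes every restriction to the noncritical set. Under Assumption~\ref{ass:classical-deterministic}, the map $(x,\xi)\mapsto A_{\varepsilon,i}(x,\xi)=\exp\bigl((p(x)-2)\log r_\varepsilon(\xi)\bigr)\xi_i$ is $C^1$ on $\Omega\times\mathbb{R}^N$. This holds because $\log r_\varepsilon$ is smooth on all of $\mathbb{R}^N$ and $p\in C^1(\Omega)$. Since also $\nabla u\in C^1(\Omega)$, the composition $x\mapsto A_{\varepsilon,i}(x,\nabla u(x))$ is $C^1$, and the chain rule \eqref{eq:framework-chain-rule} applies at every $x\in\Omega$, including points where $\nabla u(x)=0$.

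First I would compute the explicit spatial derivative with $\xi$ held fixed. Writing $r_\varepsilon^{p(x)-2}=\exp((p(x)-2)\log r_\varepsilon)$ gives $\partial_{x_i}r_\varepsilon(\xi)^{p(x)-2}=r_\varepsilon^{p(x)-2}\log r_\varepsilon\,p_{x_i}(x)$. Hence $(\partial_{x_i}A_{\varepsilon,i})(x,\xi)=r_\varepsilon^{p(x)-2}\log r_\varepsilon\,p_{x_i}\xi_i$. Summing over $i$ and setting $\xi=\nabla u(x)$ yields $c_\varepsilon(x,\nabla u)\cdot\nabla u$ with $c_\varepsilon$ as in \eqref{eq:c-eps}.

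Second, the gradient-derivative part of \eqref{eq:framework-chain-rule} is $\sum_{i,j}(\partial_{\xi_j}A_{\varepsilon,i})(x,\nabla u)u_{x_jx_i}$. By Lemma~\ref{lem:reg-spectrum}, $\partial_{\xi_j}A_{\varepsilon,i}=(D_\varepsilon)_{ij}$. Symmetry of $D^2u$ (from $u\in C^2$) then identifies this sum with $D_\varepsilon(x,\nabla u)\fro D^2u$, which proves \eqref{eq:reg-expansion}.

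Third, for the scalar form \eqref{eq:reg-expanded-scalar}, substitute \eqref{eq:D-eps} and contract to get
\[
D_\varepsilon\fro D^2u=r_\varepsilon^{p(x)-2}\Delta u+(p(x)-2)r_\varepsilon^{p(x)-4}\ip{D^2u\nabla u}{\nabla u}.
\]
Factoring out $r_\varepsilon^{p(x)-2}$ and using $r_\varepsilon^{-2}=(\varepsilon^2+\abs{\nabla u}^2)^{-1}$ gives the displayed bracket. The drift term contributes the factor $\log r_\varepsilon(\nabla u)\,\nabla p\cdot\nabla u$.

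The argument is routine; the one point needing care is the justification of the chain rule and differentiability at critical points. There the unregularized argument broke down because of $\abs{\xi}^{p-2}$ and $\log\abs{\xi}$. I would address it by noting that all factors are smooth functions of $r_\varepsilon\in[\varepsilon,\infty)$, so no case distinction at $\nabla u=0$ is needed. At such points the expansion reduces to $\varepsilon^{p(x)-2}\Delta u$, which agrees with the remark after \eqref{eq:framework-chain-rule}.
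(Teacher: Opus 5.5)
Your proposal is correct and follows the paper's proof essentially step for step. Both use the chain rule \eqref{eq:framework-chain-rule}: the explicit $x$-derivative gives $c_\varepsilon(x,\nabla u)\cdot\nabla u$ via $\partial_{x_i}r_\varepsilon^{p(x)-2}=r_\varepsilon^{p(x)-2}\log r_\varepsilon\,p_{x_i}$, the derivative through $\nabla u$ gives $D_\varepsilon(x,\nabla u)\fro D^2u$, and contracting \eqref{eq:D-eps} with $D^2u$ yields the scalar form. Your explicit check that $A_\varepsilon$ is jointly $C^1$ on $\Omega\times\mathbb{R}^N$ (because $r_\varepsilon\geq\varepsilon>0$) is a useful addition that the paper leaves implicit in its remark that no exclusion of $\nabla u=0$ is needed.
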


\begin{proof}
The proof follows the chain-rule calculation in Lemma \ref{thm:unreg-expansion}, but no exclusion of $\nabla u=0$ is needed. For fixed $\xi$,
\begin{align*}
\partial_{x_i}r_\varepsilon(\xi)^{p(x)-2}&=r_\varepsilon(\xi)^{p(x)-2}\log r_\varepsilon(\xi)\,p_{x_i}(x).
\end{align*}
Thus the explicit $x$-derivatives sum to $c_\varepsilon(x,\nabla u)\cdot\nabla u$. On the other hand, differentiation through $\nabla u$ contributes $D_\varepsilon(x,\nabla u)\fro D^2u$, proving \eqref{eq:reg-expansion}. Lastly, contracting the two terms of \eqref{eq:D-eps} with $D^2u$ gives \eqref{eq:reg-expanded-scalar}.
\end{proof}
\begin{corollary}\label{cor:critical-reg-expansion}
Under Assumption~\ref{ass:classical-deterministic}, if $\nabla u(x)=0$, then
\begin{equation}\label{eq:critical-reg-value}
\mathrm{div} A_\varepsilon(x,\nabla u)=\varepsilon^{p(x)-2}\Delta u.
\end{equation}
The first-order contribution $c_\varepsilon(x,\nabla u)\cdot\nabla u$ vanishes at the critical point $\nabla u(x) =0 $, while the drift vector itself need not vanish
\[
c_\varepsilon(x,0)=\varepsilon^{p(x)-2}\log\varepsilon\,\nabla p(x).
\]
\end{corollary}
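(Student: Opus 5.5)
The plan is to specialize the regularized expansion of Lemma~\ref{thm:reg-expansion} to the critical point. Fix $x\in\Omega$ with $\nabla u(x)=0$. Under Assumption~\ref{ass:classical-deterministic}, identity \eqref{eq:reg-expansion} holds at every point of $\Omega$, with no exclusion of the critical set, so it holds at this $x$. It therefore suffices to evaluate its two terms at $\xi=\nabla u(x)=0$.

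\emph{Second-order term.} Lemma~\ref{lem:reg-spectrum}, formula \eqref{eq:D-eps-zero}, gives $D_\varepsilon(x,0)=\varepsilon^{p(x)-2}\Id$. Since $\Id\fro D^2u=\tr(D^2u)=\Delta u$, we obtain
\[
D_\varepsilon(x,\nabla u)\fro D^2u=\varepsilon^{p(x)-2}\Delta u .
\]

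\emph{First-order term.} Because $r_\varepsilon(0)=\varepsilon>0$, formula \eqref{eq:c-eps} is well defined at $\xi=0$ and gives
\[
c_\varepsilon(x,0)=\varepsilon^{p(x)-2}\log\varepsilon\,\nabla p(x).
\]
Its scalar product with $\nabla u(x)=0$ vanishes, so the first-order contribution is zero.

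Adding the two terms yields \eqref{eq:critical-reg-value}. As a cross-check, the same value follows from the scalar form \eqref{eq:reg-expanded-scalar}: there the quadratic term $\ip{D^2u\nabla u}{\nabla u}$ and the logarithmic term $\nabla p\cdot\nabla u$ both vanish, leaving $r_\varepsilon(0)^{p(x)-2}\Delta u=\varepsilon^{p(x)-2}\Delta u$.

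For the claim that the drift vector need not vanish, it is enough to note that $\varepsilon^{p(x)-2}>0$. Hence $c_\varepsilon(x,0)\neq0$ exactly when $\varepsilon\neq1$ and $\nabla p(x)\neq0$, and a nonconstant exponent with $\varepsilon\neq1$ at such a point gives an instance.

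There is no real obstacle in this argument. The only point that needs care is that Lemma~\ref{thm:reg-expansion} is genuinely valid at critical points. This holds because $r_\varepsilon\geq\varepsilon>0$ makes $A_\varepsilon$ smooth in $\xi$ near $0$, which justifies the chain rule \eqref{eq:framework-chain-rule} there.
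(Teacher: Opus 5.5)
Your proposal is correct and follows the paper's own argument: specialize the regularized expansion \eqref{eq:reg-expansion} at $\nabla u(x)=0$, use $D_\varepsilon(x,0)=\varepsilon^{p(x)-2}\Id$ to get $\varepsilon^{p(x)-2}\Delta u$, and note that the first-order term vanishes. Your extra observation that $c_\varepsilon(x,0)\neq0$ exactly when $\varepsilon\neq1$ and $\nabla p(x)\neq0$ is accurate, and the appendix example at the critical point $x=0$ supplies an instance of it.
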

\begin{proof}
Set $\nabla u=0$ in \eqref{eq:reg-expansion}. By \eqref{eq:D-eps-zero}, the second-order term is $\varepsilon^{p(x)-2}\Delta u$, and the first-order term is zero.
\end{proof}

\begin{lemma}\label{lem:lipschitz-coeff}
Under Assumption \ref{ass:classical-framework}, there are constants $0<\alpha_u\leq \beta_u<\infty$ such that
\begin{equation}\label{eq:frozen-ellipticity}
\alpha_u\abs{\eta}^2\leq\eta^\top D_{u,\varepsilon}(x)\eta\leq\beta_u\abs{\eta}^2\quad x\in\overline\Omega,\ \eta\in\mathbb{R}^N.
\end{equation}
The fields $D_{u,\varepsilon}$, $b_{u,\varepsilon}$, and $\sigma_{u,\varepsilon}$ are bounded on $\overline\Omega$ and locally Lipschitz on a $\mathcal{U}$. For $\nabla u(x)\neq0$,
\begin{equation}\label{eq:sigma-spectral}
\sigma_{u,\varepsilon}(x)=\sqrt{2\mu_{\parallel,\varepsilon}}P_{\nabla u}+\sqrt{2\mu_{\perp,\varepsilon}}(\Id-P_{\nabla u}),
\end{equation}
and at $\nabla u(x)=0$,
\begin{equation}\label{eq:sigma-zero}
\sigma_{u,\varepsilon}(x)=\sqrt2\,\varepsilon^{(p(x)-2)/2}\Id.
\end{equation}
\end{lemma}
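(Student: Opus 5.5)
The plan is to reduce everything to Lemma~\ref{thm:reg-uniform} and Lemma~\ref{lem:reg-spectrum}, applied on the compact set $K$ from \eqref{eq:framework-product-set}. The argument has four steps, and the only non-routine ingredient is the matrix square root in Step~3.

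\textbf{Step 1: ellipticity.} Since $\abs{\nabla u(x)}\leq M_u$ on $\overline V\supset\overline\Omega$, I apply Lemma~\ref{thm:reg-uniform} with $M:=\max\{M_u,1\}$. This avoids the degenerate case $M_u=0$, which that lemma excludes because it requires $M>0$. The exponent bounds hold on $\mathcal U$ by Assumption~\ref{ass:classical-framework}, and the proof of Lemma~\ref{thm:reg-uniform} uses only these bounds, so it applies verbatim on $\overline V$. This gives \eqref{eq:frozen-ellipticity} with
\[
\alpha_u:=\alpha_{\varepsilon,M},\qquad \beta_u:=\beta_{\varepsilon,M}.
\]
In fact the same bounds hold on all of $\overline V$, which I will need in Step~3.

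\textbf{Step 2: boundedness and local Lipschitz continuity of $D_{u,\varepsilon}$ and $b_{u,\varepsilon}$.} Consider the maps
\[
(x,\xi)\mapsto r_\varepsilon(\xi)^{p(x)-2}=\exp\bigl((p(x)-2)\log r_\varepsilon(\xi)\bigr)
\quad\text{and}\quad
(x,\xi)\mapsto \log r_\varepsilon(\xi).
\]
Because $r_\varepsilon\geq\varepsilon>0$ and $r_\varepsilon$ is smooth, both maps are $C^1$ in $\xi$ and $C^{1,1}$ in $x$ on $\mathcal U\times\mathbb{R}^N$. By \eqref{eq:D-eps}, the matrix $D_\varepsilon(x,\xi)$ is then locally Lipschitz on $\mathcal U\times\mathbb{R}^N$. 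By \eqref{eq:c-eps}, so is $c_\varepsilon(x,\xi)$, since it is a product of locally Lipschitz, locally bounded factors and $\nabla p$ is locally Lipschitz because $p\in C^{1,1}$.

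Next I compose with $x\mapsto(x,\nabla u(x))$, which is locally Lipschitz because $u\in C^{2,1}$, so $\nabla u$ is in fact $C^1$. This shows that $D_{u,\varepsilon}$ and $b_{u,\varepsilon}$ are locally Lipschitz on $\mathcal U$. Boundedness on $\overline\Omega$ follows from continuity on the compact set $\overline V$. Explicitly,
\[
\norm{D_{u,\varepsilon}}\leq\beta_u,\qquad
\abs{b_{u,\varepsilon}}\leq M_{\varepsilon,M}\,\max\{\abs{\log\varepsilon},\abs{\log R_{\varepsilon,M}}\}\,\sup_{\overline V}\abs{\nabla p}.
\]

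\textbf{Step 3: the main obstacle, transferring regularity to $\sigma_{u,\varepsilon}=(2D_{u,\varepsilon})^{1/2}$.} The square-root map is Lipschitz only on spectrally bounded sets, and the uniform ellipticity from Lemma~\ref{thm:reg-uniform} holds only for bounded gradient ranges. The way around this is that local Lipschitz continuity is a local statement. Given $x_0\in\mathcal U$, I choose a compact ball $B\subset\mathcal U$ around $x_0$. On $B$ the gradient is bounded, $\abs{\nabla u}\leq M_B$, so Lemma~\ref{thm:reg-uniform} places $2D_{u,\varepsilon}(B)$ in a set
\[
\{S\in\operatorname{Sym}_N^{++}:\ 2\alpha_{\varepsilon,M_B}\Id\leq S\leq2\beta_{\varepsilon,M_B}\Id\}.
\]
On this set the square root is Lipschitz \cite{Bhatia1997,HighamFunctions}, with constant for instance $1/(2\sqrt{2\alpha_{\varepsilon,M_B}})$. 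This bound comes from the Sylvester-equation estimate
\[
\norm{S^{1/2}-T^{1/2}}\leq\frac{\norm{S-T}}{\lambda_{\min}(S^{1/2})+\lambda_{\min}(T^{1/2})}.
\]
Composing with Step~2 makes $\sigma_{u,\varepsilon}$ Lipschitz on $B$. Boundedness on $\overline\Omega$ follows from $\norm{\sigma_{u,\varepsilon}}_{\mathrm{op}}\leq\sqrt{2\beta_u}$.

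\textbf{Step 4: the explicit formulas.} For $\nabla u(x)\neq0$, Lemma~\ref{lem:reg-spectrum} gives
\[
2D_{u,\varepsilon}=2\mu_{\parallel,\varepsilon}P+2\mu_{\perp,\varepsilon}(\Id-P),\qquad P:=P_{\nabla u},
\]
where $P$ and $\Id-P$ are complementary orthogonal projections. The candidate matrix on the right of \eqref{eq:sigma-spectral} is symmetric and has positive eigenvalues $\sqrt{2\mu_{\parallel,\varepsilon}}$ and $\sqrt{2\mu_{\perp,\varepsilon}}$, both positive by Lemma~\ref{thm:reg-uniform}. Its square equals $2D_{u,\varepsilon}$ because $P^2=P$ and $P(\Id-P)=0$. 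By uniqueness of the principal square root, it is $\sigma_{u,\varepsilon}(x)$. When $N=1$ the term with $\Id-P=0$ vanishes harmlessly. At $\nabla u(x)=0$, \eqref{eq:D-eps-zero} gives $2D_{u,\varepsilon}=2\varepsilon^{p(x)-2}\Id$, whose principal root is \eqref{eq:sigma-zero}.
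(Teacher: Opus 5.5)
Your proposal is correct and follows essentially the same route as the paper. The shared steps are: ellipticity from Lemma~\ref{thm:reg-uniform} with $M=\max\{M_u,1\}$; local Lipschitz continuity of $D_\varepsilon$ and $c_\varepsilon$ from their $C^1$ structure, composed with $\nabla u$; a Lipschitz square-root estimate under a common lower spectral bound, where your Sylvester bound gives the same constant $1/(2\sqrt{2\alpha})$ as the paper's appeal to Bhatia's Theorem X.3.8; and the spectral formula via uniqueness of the principal root. The one real difference is that you localize the square-root step to balls $B\subset\mathcal U$ with their own ellipticity constants. This proves local Lipschitz continuity of $\sigma_{u,\varepsilon}$ on all of $\mathcal U$ more carefully than the paper, whose single constant $\alpha_u$ is established only on $\overline V$.
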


\begin{proof}
We prove separately that
\[
b_{u,\varepsilon}(x) = c_\varepsilon(x,\nabla u(x))
\]
and the diffusion coefficient
\[
\sigma_{u,\varepsilon}(x) = \left(2D_\varepsilon(x,\nabla u(x)) \right )^{1/2}
\]
are locally Lipschitz in a neighborhood of $\overline\Omega$, and that both maps are bounded on $\overline\Omega$. We also establish the asserted bounds for $D_{u,\varepsilon}$. All matrix estimates in this proof use the operator norm; the Frobenius-norm consequences needed for stochastic integrals follow from \eqref{eq:matrix-norm-equivalence}.\\
Recall the fixed bounded open set $V$ in \eqref{eq:framework-neighborhood}, so that
\[
\overline\Omega\subset V,\quad \overline V\Subset\mathcal{U}.
\]
Since $u\in C^{2,1}(\mathcal U)$, the map $\nabla u$ is $C^1$, hence locally Lipschitz, on $\mathcal U$. Thus $\nabla u|_{\overline V}$ is Lipschitz, and hence, there is a constant $L_{\nabla u}>0$ such that
\begin{equation}\label{eq:lemma72-gradient-Lipschitz}
|\nabla u(x)-\nabla u(y)| \leq L_{\nabla u}|x-y| \quad \text{for all }x,y\in\overline V. 
\end{equation}

The bound $M_u$ fixed in \eqref{eq:framework-neighborhood} satisfies
\begin{equation}\label{eq:lemma72-gradient-bound}
M_u = \sup_{x\in\overline V}|\nabla u(x)|<\infty.
\end{equation}

Recall the compact product set from \eqref{eq:framework-product-set}
\begin{equation}\label{eq:lemma72-compact-set}
K=\overline V\times\{\xi\in\mathbb R^N:|\xi|\leq M_u\}\Subset\mathcal{U}\times\mathbb R^N.
\end{equation}
This definition includes $M_u=0$, when its second factor is $\{0\}$. Every $(x,\nabla u(x))$ with $x\in\overline V$ belongs to $K$. All two-variable Lipschitz estimates below are asserted on this exact set.\\
By Assumption \ref{ass:framework}, both $p$ and $\nabla p$ are locally Lipschitz on $\mathcal{U}$. Thus $p|_{\overline V}$ and $\nabla p|_{\overline V}$ are Lipschitz, giving the constants $L_p,\,L_{\nabla p}>0$ such that
\begin{equation}\label{eq:lemma72-gradient-p-Lipschitz}
|\nabla p(x)-\nabla p(y)| \leq L_{\nabla p}|x-y| \quad \text{for all }x,y\in\overline V. 
\end{equation}
Since $p$ and $\nabla p$ are continuous on the compact set $\overline V$, both are bounded.\\
Define
\[
P_0 := \sup_{x\in\overline V}|p(x)|, \quad P_1 := \sup_{x\in\overline V}|\nabla p(x)|.
\]
Moreover, because $ p\in C^1(\overline V)$, the Mean Value Theorem (MVT) gives
\begin{equation}\label{eq:lemma72-p-Lipschitz}
|p(x)-p(y)| \leq P_1|x-y| \quad \text{for all }x,y\in\overline V. 
\end{equation}
Thus both $p$ and $\nabla p$ are Lipschitz on $\overline V$.\\
For $(x,\xi)\in K$, we have $ |\xi|\leq M_u$. Therefore,
\[
\varepsilon \leq r_\varepsilon(\xi) = \sqrt{\varepsilon^2+|\xi|^2} \leq \sqrt{\varepsilon^2+M_u^2}.
\]
Define
\begin{equation}\label{eq:lemma72-Ru}
R_u := \sqrt{\varepsilon^2+M_u^2}. 
\end{equation}
Then
\begin{equation}\label{eq:lemma72-r-bounds}
\varepsilon \leq r_\varepsilon(\xi) \leq R_u \quad \text{for every }(x,\xi)\in K. 
\end{equation}
In particular, the regularization prevents $r_\varepsilon(\xi)$ from approaching zero. The derivative of $r_\varepsilon$ with respect to $\xi_j$ is
$\frac{\partial r_\varepsilon}{\partial\xi_j} = \frac{\xi_j}{r_\varepsilon(\xi)}$. Therefore,
\begin{equation}\label{r-deriv}
|\nabla_\xi r_\varepsilon(\xi)| = \frac{|\xi|}{r_\varepsilon(\xi)} \leq1.
\end{equation}
Therefore by the MVT,
\begin{equation}\label{eq:lemma72-r-Lipschitz}
|r_\varepsilon(\xi)-r_\varepsilon(\eta)| \leq |\xi-\eta| \quad \text{for all }\xi,\eta\in\mathbb R^N. 
\end{equation}
Thus $r_\varepsilon$ is globally Lipschitz with Lipschitz constant at most $1$.\\
Define
\begin{equation}\label{eq:lemma72-F-definition}
F(x,\xi) := r_\varepsilon(\xi)^{p(x)-2}. 
\end{equation}
Because \(r_\varepsilon(\xi)>0\), we may write
\[
F(x,\xi) = \exp\left( (p(x)-2)\log r_\varepsilon(\xi) \right).
\]
For a spatial coordinate $x_k$,
\begin{align}\label{eq:lemma72-F-x-derivative}
\frac{\partial F}{\partial x_k}(x,\xi) = r_\varepsilon(\xi)^{p(x)-2} \log r_\varepsilon(\xi)\, p_{x_k}(x). 
\end{align}
For a gradient coordinate $\xi_j$,
\begin{align}\label{eq:lemma72-F-xi-derivative}
\frac{\partial F}{\partial\xi_j}(x,\xi) = (p(x)-2) r_\varepsilon(\xi)^{p(x)-4}\xi_j. 
\end{align}
All quantities appearing in \eqref{eq:lemma72-F-x-derivative} and \eqref{eq:lemma72-F-xi-derivative} are continuous on $K$. Because $K$ is compact, there is a constant $C_F>0$ such that \begin{equation}\label{eq:lemma72-F-derivative-bound}
|\nabla_xF(x,\xi)| + |\nabla_\xi F(x,\xi)| \leq C_F \quad \text{for every }(x,\xi)\in K. 
\end{equation}
The derivative formulas show that $F$ is $C^1$ on the open set $\mathcal U\times\mathbb R^N$, hence locally Lipschitz there.
Therefore, $F$ is Lipschitz on $K$ yielding a constant $L_F>0$ such that
\begin{equation}\label{eq:lemma72-F-Lipschitz}
|F(x,\xi)-F(y,\eta)| \leq L_F \left( |x-y|+|\xi-\eta| \right)  \quad \text{for every }(x,\xi),(y,\eta)\in K.\\
\end{equation}

Define
\begin{equation}\label{eq:lemma72-G-definition}
G(x,\xi) := r_\varepsilon(\xi)^{p(x)-2} \log r_\varepsilon(\xi). 
\end{equation}
Equivalently,
\[
G(x,\xi)=F(x,\xi)\log r_\varepsilon(\xi).
\]
Therefore,
\begin{align}\label{eq:lemma72-G-x-derivative}
\frac{\partial G}{\partial x_k}(x,\xi) = r_\varepsilon(\xi)^{p(x)-2} \left( \log r_\varepsilon(\xi) \right)^2 p_{x_k}(x), 
\end{align}
and
\begin{align}\label{eq:lemma72-G-xi-derivative}
\frac{\partial G}{\partial\xi_j}(x,\xi) = r_\varepsilon(\xi)^{p(x)-4}\xi_j \left[ 1+(p(x)-2)\log r_\varepsilon(\xi) \right]. 
\end{align}
The derivatives in \eqref{eq:lemma72-G-x-derivative} and \eqref{eq:lemma72-G-xi-derivative} are continuous on $\mathcal U\times\mathbb R^N$ and bounded on $K$. Thus $G$ is locally Lipschitz on that open set. Therefore, $G$ has bounded first derivatives on $K$ giving a constant $L_G>0$ such that
\begin{equation} \label{eq:lemma72-G-Lipschitz}
|G(x,\xi)-G(y,\eta)| \leq L_G \left( |x-y|+|\xi-\eta| \right)\quad \text{for every }(x,\xi),(y,\eta)\in K
\end{equation}
In particular, $G$ is bounded on $K$. Define
\begin{equation}\label{eq:lemma72-G-bound}
C_G := \sup_{(x,\xi)\in K}|G(x,\xi)|. 
\end{equation}
The regularized drift is
\[
b_{u,\varepsilon}(x) = G(x,\nabla u(x))\nabla p(x).
\]
Let $x,y\in\overline V$. Adding and subtracting $G(y,\nabla u(y))\nabla p(x)$ gives
\begin{align}\label{eq:lemma72-drift-difference}
b_{u,\varepsilon}(x)-b_{u,\varepsilon}(y) &= G(x,\nabla u(x))\nabla p(x)- G(y,\nabla u(y))\nabla p(y) \nonumber\\
&= \left[ G(x,\nabla u(x)) - G(y,\nabla u(y)) \right]\nabla p(x) + G(y,\nabla u(y)) \left[ \nabla p(x)-\nabla p(y) \right]. 
\end{align}
Taking norms and using the triangle inequality gives
\begin{align}\label{eq:lemma72-drift-bound-first}
|b_{u,\varepsilon}(x)-b_{u,\varepsilon}(y)| \leq \left| G(x,\nabla u(x)) - G(y,\nabla u(y)) \right| |\nabla p(x)| + |G(y,\nabla u(y))| |\nabla p(x)-\nabla p(y)|. 
\end{align}
Using
\[
|\nabla p(x)|\leq P_1 \quad \text{and} \quad |G(y,\nabla u(y))|\leq C_G,
\]
and the Lipschitz estimate for $G$, we obtain
\begin{align}\label{eq:lemma72-G-profile-difference}
\left| G(x,\nabla u(x)) - G(y,\nabla u(y)) \right| \leq L_G \left( |x-y| + |\nabla u(x)-\nabla u(y)| \right ). 
\end{align}
Using the Lipschitz property of $\nabla u$,
\[
|\nabla u(x)-\nabla u(y)| \leq L_{\nabla u}|x-y|,
\]
we find
\begin{align}\label{eq:lemma72-G-profile-Lipschitz}
\left| G(x,\nabla u(x)) - G(y,\nabla u(y)) \right| \leq L_G(1+L_{\nabla u})|x-y|. 
\end{align}
Similarly,
\[
|\nabla p(x)-\nabla p(y)| \leq L_{\nabla p}|x-y|.
\]
Substituting these estimates into \eqref{eq:lemma72-drift-bound-first} gives
\begin{align}\label{eq:lemma72-drift-Lipschitz}
|b_{u,\varepsilon}(x)-b_{u,\varepsilon}(y)| \leq P_1L_G(1+L_{\nabla u})|x-y|+ C_GL_{\nabla p}|x-y|= L_b|x-y|, 
\end{align}
where
\begin{equation}\label{eq:lemma72-Lb-definition}
L_b := P_1L_G(1+L_{\nabla u}) + C_GL_{\nabla p}. 
\end{equation}
Thus $b_{u,\varepsilon}$ is Lipschitz on $\overline V$, and therefore locally Lipschitz in a neighborhood of $\overline\Omega$. The drift is also bounded. Indeed,
\begin{align}\label{eq:lemma72-drift-bounded}
|b_{u,\varepsilon}(x)| = |G(x,\nabla u(x))| |\nabla p(x)| \leq C_GP_1\quad \forall x \in \overline{V}.
\end{align}

The regularized diffusion matrix can be written as
\[
D_\varepsilon(x,\xi) = F(x,\xi)I + H(x,\xi)\xi\otimes\xi,
\]
where
\begin{equation}\label{eq:lemma72-H-definition}
H(x,\xi) := (p(x)-2)r_\varepsilon(\xi)^{p(x)-4}. 
\end{equation}
To prove that $D_\varepsilon$ is Lipschitz, it is enough to prove that $F$, $H$, and the map $\xi\longmapsto\xi\otimes\xi$ are Lipschitz on the bounded set under consideration. The Lipschitz property of $F$ was established above. We now analyze $H$.\\
The function
\[
H(x,\xi) = (p(x)-2)r_\varepsilon(\xi)^{p(x)-4}
\]
is continuously differentiable on a neighborhood of $K$. Indeed, holding $\xi$ fixed gives
\begin{align}\label{eq:lemma72-H-x-derivative}
\frac{\partial H}{\partial x_k}(x,\xi) = p_{x_k}(x) r_\varepsilon(\xi)^{p(x)-4} \left[ 1+(p(x)-2)\log r_\varepsilon(\xi) \right]. 
\end{align}
For the gradient derivative, 
\begin{align}\label{eq:lemma72-H-xi-derivative}
\frac{\partial H}{\partial\xi_j}(x,\xi) = (p(x)-2)(p(x)-4) r_\varepsilon(\xi)^{p(x)-6}\xi_j. 
\end{align}
The derivative formulas above are continuous on $\mathcal U\times\mathbb R^N$ and bounded on $K$, because $r_\varepsilon\geq\varepsilon>0$ and $p$, $\nabla p$, and $\xi$ are bounded on $K$. Therefore, $H$ is Lipschitz on $K$, and hence, there is a constant $L_H>0$ such that
\begin{equation} \label{eq:lemma72-H-Lipschitz}
|H(x,\xi)-H(y,\eta)| \leq L_H \left(|x-y|+|\xi-\eta| \right)\quad \text{for every }(x,\xi),(y,\eta)\in K.
\end{equation}
Define
\begin{equation}\label{eq:lemma72-H-bound}
C_H := \sup_{(x,\xi)\in K}|H(x,\xi)|. 
\end{equation}
Let $|\xi|,|\eta|\leq M_u$. Then
\begin{align}\label{eq:lemma72-tensor-difference} 
\xi\otimes\xi-\eta\otimes\eta = \xi\otimes\xi-\eta\otimes\xi + \eta\otimes\xi-\eta\otimes\eta = (\xi-\eta)\otimes\xi + \eta\otimes(\xi-\eta).
\end{align}
For every $v,w\in\mathbb R^N$, the exact identity
\[
\|v\otimes w\|_{\mathrm{op}}=|v||w|
\]
follows from $(v\otimes w)z=v(w\cdot z)$ and Cauchy--Schwarz, with equality for $z=w/|w|$ when $w\neq0$. The same identity holds for the Frobenius norm.
Therefore,
\begin{align}\label{eq:lemma72-tensor-Lipschitz}
\|\xi\otimes\xi-\eta\otimes\eta\|_{\mathrm{op}}&\leq|\xi-\eta||\xi|+|\eta||\xi-\eta|\nonumber\\
&=(|\xi|+|\eta|)|\xi-\eta|\leq2M_u|\xi-\eta|.
\end{align}
Thus the outer-product map is Lipschitz on every bounded set. Also,
\begin{equation}\label{eq:lemma72-tensor-bound}
\|\xi\otimes\xi\|_{\mathrm{op}}=|\xi|^2\leq M_u^2.
\end{equation}
Let
\[
D_\varepsilon(x,\xi) = F(x,\xi)I + H(x,\xi)\xi\otimes\xi.
\]
For two pairs $(x,\xi)$ and $(y,\eta)$ in $K$, write
\begin{align}\label{eq:lemma72-D-difference-first}
D_\varepsilon(x,\xi)-D_\varepsilon(y,\eta) = \left[ F(x,\xi)-F(y,\eta) \right]I + H(x,\xi)\xi\otimes\xi - H(y,\eta)\eta\otimes\eta.
\end{align}
Adding and subtracting $H(y,\eta)\xi\otimes\xi$ gives
\begin{align}\label{eq:lemma72-H-tensor-decomposition}
H(x,\xi)\xi\otimes\xi - H(y,\eta)\eta\otimes\eta = \left[ H(x,\xi)-H(y,\eta) \right]\xi\otimes\xi + H(y,\eta) \left[ \xi\otimes\xi-\eta\otimes\eta \right]. 
\end{align}
Taking norms and applying the triangle inequality yields
\begin{align}\label{eq:lemma72-D-difference-bound}
\|D_\varepsilon(x,\xi)-D_\varepsilon(y,\eta)\|_{\mathrm{op}}
&\leq \|I\|_{\mathrm{op}}|F(x,\xi)-F(y,\eta)|\nonumber\\
&\quad+\|\xi\otimes\xi\|_{\mathrm{op}}|H(x,\xi)-H(y,\eta)|\nonumber\\
&\quad+|H(y,\eta)|\|\xi\otimes\xi-\eta\otimes\eta\|_{\mathrm{op}}.
\end{align}
Using \eqref{eq:lemma72-F-Lipschitz}, \eqref{eq:lemma72-H-Lipschitz}, \eqref{eq:lemma72-tensor-Lipschitz}, and \eqref{eq:lemma72-tensor-bound} gives
\begin{align}\label{eq:lemma72-D-bound-expanded}
\|D_\varepsilon(x,\xi)-D_\varepsilon(y,\eta)\|_{\mathrm{op}}&\leq \|I\|_{\mathrm{op}}L_F \left( |x-y|+|\xi-\eta| \right) + M_u^2L_H \left( |x-y|+|\xi-\eta| \right)\nonumber\\
& + 2C_HM_u|\xi-\eta|.
\end{align}
Hence there exists a constant \(L_D>0\) such that
\begin{equation}\label{eq:lemma72-D-Lipschitz-pair}
\|D_\varepsilon(x,\xi)-D_\varepsilon(y,\eta)\|_{\mathrm{op}} \leq L_D \left( |x-y|+|\xi-\eta| \right). 
\end{equation}
Thus $D_\varepsilon$ is Lipschitz with respect to $(x,\xi)$ on $K$ in \eqref{eq:lemma72-compact-set}. For the operator norm $\|I\|_{\mathrm{op}}=1$, so one may take $L_D=L_F+M_u^2L_H+2C_HM_u$.\\
Define the diffusion matrix with the profile gradient by (see \eqref{eq:intro-frozen-coefficients-a})
\[
D_{u,\varepsilon}(x) := D_\varepsilon(x,\nabla u(x)).
\]
For \(x,y\in\overline V\), apply \eqref{eq:lemma72-D-Lipschitz-pair} with $\xi=\nabla u(x)$, and $\eta=\nabla u(y)$. Then
\begin{align}\label{eq:lemma72-D-profile-Lipschitz}
\|D_{u,\varepsilon}(x)-D_{u,\varepsilon}(y)\|_{\mathrm{op}} \leq L_D \left( |x-y| + |\nabla u(x)-\nabla u(y)| \right) \leq L_D(1+L_{\nabla u})|x-y|. 
\end{align}
Therefore, $ x\longmapsto D_{u,\varepsilon}(x)$ is Lipschitz on $\overline V$. Recall the covariance field 
\begin{equation}\label{eq:thm74-covariance-definition}
a_{u,\varepsilon}(x) = \sigma_{u,\varepsilon}(x) \sigma_{u,\varepsilon}(x)^T = 2D_\varepsilon(x,\nabla u(x)). 
\end{equation}
Equivalently,
\[
a_{u,\varepsilon}(x) = 2D_{u,\varepsilon}(x).
\]
Consequently,
\begin{align}\label{eq:lemma72-a-Lipschitz}
\|a_{u,\varepsilon}(x)-a_{u,\varepsilon}(y)\|_{\mathrm{op}} = 2\|D_{u,\varepsilon}(x)-D_{u,\varepsilon}(y)\|_{\mathrm{op}} \leq 2L_D(1+L_{\nabla u})|x-y|. 
\end{align}
Thus \(a_{u,\varepsilon}\) is Lipschitz on \(\overline V\).\\
Set $\widehat M_u=\max\{1,M_u\}>0$. The exponent bounds of Assumption \ref{ass:classical-framework} hold throughout $\mathcal{U}$, so the pointwise estimates of Lemma \ref{thm:reg-uniform} apply on
$\overline V$ with $M=\widehat M_u$. Define
\[
\alpha_u:=\alpha_{\varepsilon,\widehat M_u},\quad \beta_u:=\beta_{\varepsilon,\widehat M_u}.
\]
Then $0<\alpha_u\leq \beta_u<\infty$ and
\[
\alpha_u|\zeta|^2\leq \zeta^TD_{u,\varepsilon}(x)\zeta \leq\beta_u|\zeta|^2
\quad \forall\, (x, \zeta) \in \overline V\times\mathbb R^N.
\]
This choice also covers a constant profile, for which $M_u=0$. Since $a_{u,\varepsilon}(x) = 2D_{u,\varepsilon}(x)$, we obtain
\begin{equation}\label{eq:lemma72-a-ellipticity}
2\alpha_u|\zeta|^2 \leq \zeta^Ta_{u,\varepsilon}(x)\zeta \leq 2\beta_u|\zeta|^2.
\end{equation}
We now show that the quadratic-form bounds in \eqref{eq:lemma72-a-ellipticity} are equivalent to the matrix inequalities
\begin{equation}\label{eq:lemma72-a-matrix-bounds}
2\alpha_u I \leq a_{u,\varepsilon}(x) \leq 2\beta_u I.
\end{equation}
For real symmetric matrices $A$ and $B$, the Loewner order relation $ A\leq B$ means that the matrix $ B-A$ is positive semidefinite. Since $|\zeta|^2 = \zeta^TI\zeta$, we have
\[
2\alpha_u|\zeta|^2 = \zeta^T(2\alpha_u I)\zeta \quad \text{and} \quad 2\beta_u|\zeta|^2 = \zeta^T(2\beta_u I)\zeta.
\]
Therefore, \eqref{eq:lemma72-a-ellipticity} can be rewritten as
\begin{equation}\label{eq:quadratic-form-matrix-version}
\zeta^T(2\alpha_u I)\zeta \leq \zeta^Ta_{u,\varepsilon}(x)\zeta \leq \zeta^T(2\beta_u I)\zeta
\end{equation}
for every $\zeta\in\mathbb R^N$. The lower inequality in \eqref{eq:quadratic-form-matrix-version} gives
\begin{align*}
0 \leq \zeta^Ta_{u,\varepsilon}(x)\zeta - \zeta^T(2\alpha_u I)\zeta = \zeta^T \left( a_{u,\varepsilon}(x)-2\alpha_u I \right) \zeta.
\end{align*}
Since this holds for every \(\zeta\), the matrix $a_{u,\varepsilon}(x)-2\alpha_u I$ is positive semidefinite. Hence
\[
2\alpha_u I \leq a_{u,\varepsilon}(x).
\]
Similarly, the upper inequality gives
\begin{align*}
0 \leq \zeta^T(2\beta_u I)\zeta -\zeta^Ta_{u,\varepsilon}(x)\zeta = \zeta^T \left(2\beta_u I-a_{u,\varepsilon}(x) \right ) \zeta.
\end{align*}
Thus $2\beta_u I-a_{u,\varepsilon}(x)$ is positive semidefinite, and therefore
\[
a_{u,\varepsilon}(x) \leq 2\beta_u I.
\]
Combining the two inequalities gives \eqref{eq:lemma72-a-matrix-bounds}. Note that the symmetry required for the Loewner order follows from $ a_{u,\varepsilon}(x) = 2D_\varepsilon(x,\nabla u(x))$ and the fact that $D_\varepsilon$ is symmetric.\\
Finally, let $\lambda$ be an eigenvalue of $a_{u,\varepsilon}(x)$, with nonzero eigenvector $v$. Then
\[
a_{u,\varepsilon}(x)v=\lambda v.
\]
Letting $\zeta=v$ in \eqref{eq:lemma72-a-ellipticity} gives
\[
2\alpha_u|v|^2 \leq v^Ta_{u,\varepsilon}(x)v \leq 2\beta_u|v|^2.
\]
Since
\[
v^Ta_{u,\varepsilon}(x)v = \lambda|v|^2,
\]
we obtain
\[
2\alpha_u|v|^2 \leq \lambda|v|^2 \leq 2\beta_u|v|^2.
\]
Dividing by $|v|^2>0$ gives
\[
2\alpha_u\leq\lambda\leq2\beta_u.
\]
Thus every eigenvalue of $a_{u,\varepsilon}(x)$ belongs to $[2\alpha_u,2\beta_u]\subset(0,\infty)$. In particular, $a_{u,\varepsilon}(x)$ is symmetric and strictly positive definite. Therefore $a_{u,\varepsilon}(x)$ can be orthogonally diagonalized; that is, there is an orthogonal matrix $Q(x)$ and positive eigenvalues $\lambda_1(x),\ldots,\lambda_N(x)$ such that
\[
a_{u,\varepsilon}(x)= Q(x) \operatorname{diag} \left(\lambda_1(x),\ldots,\lambda_N(x) \right) Q(x)^T.
\]
Then the principal symmetric square root is defined by
\[
a_{u,\varepsilon}(x)^{1/2} = Q(x) \operatorname{diag} \left(\sqrt{\lambda_1(x)},\ldots,\sqrt{\lambda_N(x)} \right ) Q(x)^T.
\]
This matrix is symmetric positive definite and satisfies
\[
a_{u,\varepsilon}(x)^{1/2} a_{u,\varepsilon}(x)^{1/2} = a_{u,\varepsilon}(x).
\]
We denote the principal square root by
\begin{equation}\label{eq:lemma72-a-b}
\sigma_{u,\varepsilon}(x) = a_{u,\varepsilon}(x)^{1/2},
\end{equation}
which is the diffusion coefficient of the SDE.\\
By \eqref{eq:lemma72-a-matrix-bounds}, for $x,y\in\overline V$ both $a_{u,\varepsilon}(x)$ and $a_{u,\varepsilon}(y)$ are bounded below by $2\alpha_u I$. We apply Theorem X.3.8 of \cite{Bhatia1997} (by letting $r=\frac{1}{2}$  and $ a=2\alpha_u $).
Then,
\begin{align} \label{eq:lemma72-resolvent-square-root-estimate}
\left\| a_{u,\varepsilon}(x)^{1/2} - a_{u,\varepsilon}(y)^{1/2} \right\|_{\mathrm{op}} &\leq \frac{1}{2\sqrt{2\alpha_u}} \left\| a_{u,\varepsilon}(x) - a_{u,\varepsilon}(y) \right\|_{\mathrm{op}}.
\end{align}
Since $a_{u,\varepsilon}$ is locally Lipschitz, using the Lipschitz estimate \eqref{eq:lemma72-a-Lipschitz}, we obtain
\begin{align}\label{eq:lemma72-resolvent-sigma-Lipschitz}
\left\| \sigma_{u,\varepsilon}(x) - \sigma_{u,\varepsilon}(y) \right\|_{\mathrm{op}} = \left\| a_{u,\varepsilon}(x)^{1/2} - a_{u,\varepsilon}(y)^{1/2} \right\|_{\mathrm{op}}
\leq \frac{ L_D(1+L_{\nabla u}) }{ \sqrt{2\alpha_u} } |x-y|.
\end{align}
Thus $\sigma_{u,\varepsilon}$ is Lipschitz on $\overline V$, and therefore locally Lipschitz on the neighborhood $V$ of $\overline\Omega$. One may take
$L_\sigma=L_D(1+L_{\nabla u})/\sqrt{2\alpha_u}$.\\

By \eqref{eq:lemma72-a-matrix-bounds}, $a_{u,\varepsilon}(x) \leq 2\beta_u I$. Therefore, every eigenvalue of $\sigma_{u,\varepsilon}(x)= a_{u,\varepsilon}(x)^{1/2}$ is at most $\sqrt{2\beta_u}$. Consequently,
\begin{equation}\label{eq:lemma72-sigma-bounded}
\|\sigma_{u,\varepsilon}(x)\|_{\mathrm{op}} \leq \sqrt{2\beta_u} \quad \text{for every }x\in\overline V. 
\end{equation}
Thus \(\sigma_{u,\varepsilon}\) is bounded on $\overline V$, and in particular on $\overline\Omega$. The same spectral bounds give $\|D_{u,\varepsilon}(x)\|_{\mathrm{op}}\leq \beta_u$ on $\overline V$. In the Frobenius norm, $\|\sigma_{u,\varepsilon}(x)\|_{\mathrm F}\leq \sqrt{2N\beta_u}$, and its Lipschitz constant is at most $\sqrt N L_\sigma$.\\
We have shown that there are constants $L_b,L_\sigma<\infty$ such that
\[
|b_{u,\varepsilon}(x)-b_{u,\varepsilon}(y)| \leq L_b|x-y|
\]
and
\[
\|\sigma_{u,\varepsilon}(x) - \sigma_{u,\varepsilon}(y)\|_{\mathrm{op}} \leq L_\sigma|x-y|
\]
for all $x,y\in\overline V$. Since $V$ is a neighborhood of $\overline\Omega$, both coefficient fields are locally Lipschitz in a neighborhood of $\overline\Omega$.
We have also shown that
\begin{equation}\label{difd-sigma-bounded}
\sup_{x\in\overline\Omega} |b_{u,\varepsilon}(x)| <\infty \quad \text{and} \quad \sup_{x\in\overline\Omega} \|\sigma_{u,\varepsilon}(x)\|_{\mathrm{op}} <\infty.
\end{equation}
Thus all three fields $D_{u,\varepsilon}$, $b_{u,\varepsilon}$, and$\sigma_{u,\varepsilon}$ are bounded on $\overline\Omega$ and Lipschitz on $\overline V$, and hence,  they are locally Lipschitz on
$\mathcal U$. In \eqref{eq:sigma-spectral}, the eigenvalues are evaluated at $(x,\nabla u(x))$ and $P_{\nabla u}=P_{\nabla u(x)}$. The formula follows by taking square roots in the orthogonal spectral decomposition from Lemma~\ref{lem:reg-spectrum}. For $N=1$, $I-P_{\nabla u}=0$, so the expression has only its radial term. Formula \eqref{eq:sigma-zero} follows directly from \eqref{eq:D-eps-zero}.
\end{proof}
\begin{corollary}\label{cor:linear-growth}
Under Assumption \ref{ass:classical-framework}, there is $C>0$ such that
\[
 \abs{b_{u,\varepsilon}(x)}+\norm{\sigma_{u,\varepsilon}(x)}_{\mathrm{op}}\leq C(1+\abs{x})\quad \forall x\in\overline\Omega.
\]
\end{corollary}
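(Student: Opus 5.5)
The plan is to obtain the linear growth bound as an immediate consequence of the uniform bounds already established in Lemma~\ref{lem:lipschitz-coeff}. No new estimate on the coefficients is needed; the statement is a weakening of boundedness on the bounded set $\overline\Omega$.

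First, I will recall from the proof of Lemma~\ref{lem:lipschitz-coeff} the two explicit bounds valid on $\overline V\supset\overline\Omega$. The drift satisfies $\abs{b_{u,\varepsilon}(x)}\leq C_GP_1$, from \eqref{eq:lemma72-drift-bounded}. Here $C_G=\sup_K\abs{G}$ is finite because $r_\varepsilon\in[\varepsilon,R_u]$ on $K$, and $P_1=\sup_{\overline V}\abs{\nabla p}$. The diffusion coefficient satisfies $\norm{\sigma_{u,\varepsilon}(x)}_{\mathrm{op}}\leq\sqrt{2\beta_u}$, from \eqref{eq:lemma72-sigma-bounded}. This follows from the Loewner upper bound $a_{u,\varepsilon}\leq 2\beta_u\Id$: the principal square root of $a_{u,\varepsilon}$ has eigenvalues equal to the square roots of those of $a_{u,\varepsilon}$, hence at most $\sqrt{2\beta_u}$.

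Next, I will set $C:=C_GP_1+\sqrt{2\beta_u}$, which is a finite constant depending only on $\varepsilon$, $p^\pm$, $M_u$ and the $C^1$ bounds of $p$ on $\overline V$. Since $1\leq 1+\abs{x}$ for every $x$, this gives
\[
\abs{b_{u,\varepsilon}(x)}+\norm{\sigma_{u,\varepsilon}(x)}_{\mathrm{op}}\leq C\leq C(1+\abs{x})\quad\forall x\in\overline\Omega.
\]
If a strictly positive constant is required, I will replace $C$ by $\max\{C,1\}$, which covers the degenerate case $\nabla p\equiv0$.

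I will also add a remark in line with Remark~\ref{rem:proof-matrix-norms-paper1}. For the later SDE proofs the same inequality holds in the Frobenius norm with $C$ replaced by $C_GP_1+\sqrt{2N\beta_u}$, by \eqref{eq:matrix-norm-equivalence}. There is no real obstacle here. The only care point is that the bound is asserted on $\overline\Omega$ (indeed on $\overline V$), where the compact-set constants apply, and not on all of $\mathbb{R}^N$; growth control away from $\overline V$ would instead come from the global extension used in Proposition~\ref{prop:sde-background}.
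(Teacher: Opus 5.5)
Your proposal is correct and follows the paper's proof: both take the uniform bounds on $\overline\Omega$ from Lemma~\ref{lem:lipschitz-coeff} and use $1\leq 1+\abs{x}$. The only difference is that the paper writes $B=\sup_{\overline\Omega}(\abs{b_{u,\varepsilon}}+\norm{\sigma_{u,\varepsilon}}_{\mathrm{op}})$ and takes $C=\max\{1,B\}$, whereas you give the explicit constant $C_GP_1+\sqrt{2\beta_u}$, which is already positive because $\beta_u>0$.
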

\begin{proof}
Boundedness follows from Lemma \ref{lem:lipschitz-coeff}.
Set
\[
 B:=\sup_{z\in\overline\Omega}\left(|b_{u,\varepsilon}(z)|+\norm{\sigma_{u,\varepsilon}(z)}_{\mathrm{op}}\right)<\infty.
\]
Since $1\leq 1+|x|$ for every $x\in\overline\Omega$,
\[
 |b_{u,\varepsilon}(x)|+\norm{\sigma_{u,\varepsilon}(x)}_{\mathrm{op}} \leq B\leq B(1+|x|).
\]
Thus the stated estimate holds with $C=\max\{1,B\}>0$. The estimate concerns the coefficients on $\overline\Omega$; the global extensions used in the next proof are constructed separately.
\end{proof}

We now are ready to present the main conclusions of the paper.

\section{The profile-frozen diffusion and its stochastic identities}\label{sec:main-results}

In Theorem \ref{thm:sde-existence} and Corollary \ref{cor:sde-stability}, every unsubscripted matrix norm is the Frobenius norm.  By \eqref{eq:matrix-norm-equivalence}, the coefficient bounds and Lipschitz constants established in operator norm also hold in this norm after multiplication by at most $\sqrt N$.  The constants $L_\sigma$ and $S_0$ below are chosen in the Frobenius norm from the outset. This fixes a single norm for the cutoff estimates, It\^o isometry, and martingale inequalities.

\subsection{Strong construction and stability}

\begin{theorem}\label{thm:sde-existence}
Under Assumption \ref{ass:classical-framework}, for every $x\in\Omega$ there is a unique strong solution of \eqref{eq:intro-profile-sde} up to $\tau_\Omega$. Pathwise uniqueness holds up to exit, and no finite-time explosion can occur before $\tau_\Omega$.
\end{theorem}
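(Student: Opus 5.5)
The plan is to reduce the statement to the standard globally Lipschitz theory by a cutoff extension, and then transfer uniqueness back to the original coefficients through the stopping time. By Lemma~\ref{lem:lipschitz-coeff}, the fields $b_{u,\varepsilon}$ and $\sigma_{u,\varepsilon}$ are Lipschitz on $\overline V$, with constants $L_b$ and $\sqrt N L_\sigma$ in the Frobenius norm. They are also bounded there by $C_GP_1$ and $\sqrt{2N\beta_u}$. Since $\overline\Omega\subset V$ is compact, I choose $\delta>0$ with $\{z:\operatorname{dist}(z,\overline\Omega)\le 2\delta\}\subset V$. I then fix a smooth cutoff $\chi\in C_c^\infty(V)$ with $0\le\chi\le1$, $\chi\equiv1$ on the $\delta$-neighbourhood of $\overline\Omega$, and $\abs{\nabla\chi}\le C/\delta$. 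The global extensions are
\[
\tilde b=\chi\, b_{u,\varepsilon},\qquad \tilde\sigma=\chi\,\sigma_{u,\varepsilon}+(1-\chi)\,\sigma_0,
\]
where $\sigma_0$ is any constant matrix, for instance $0$, and everything is extended by the constant part outside $V$. The product of a bounded Lipschitz function with a bounded Lipschitz cutoff is Lipschitz on $V$, and it is constant near $\partial V$, so $\tilde b$ and $\tilde\sigma$ are globally Lipschitz and bounded on $\mathbb R^N$. They agree with the original coefficients on $\overline\Omega$.

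\emph{Step 1 (global solution).} For the extended SDE $dY_t=\tilde b(Y_t)\,dt+\tilde\sigma(Y_t)\,dW_t$ with $Y_0=x$, I run the usual Picard iteration on $[0,T]$ for arbitrary $T$. The It\^o isometry and Doob's inequality in the Frobenius norm give the bound $\mathbb E\sup_{s\le t}\abs{Y^{n+1}_s-Y^n_s}^2\le C_T\int_0^t\mathbb E\sup_{r\le s}\abs{Y^n_r-Y^{n-1}_r}^2\,ds$. Iterating yields the factorial bound $C_T^n t^n/n!$, and Borel--Cantelli gives almost sure uniform convergence to a continuous adapted strong solution. Boundedness of the coefficients yields $\mathbb E\sup_{t\le T}\abs{Y_t}^2<\infty$, so $Y$ does not explode. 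Gronwall applied to the difference of two solutions gives global pathwise uniqueness. This is exactly the content of Proposition~\ref{prop:sde-background}, which I may cite, though I would record the constants.

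\emph{Step 2 (restriction and uniqueness up to exit).} I set $X_t=Y_{t\wedge\tau_\Omega}$, where $\tau_\Omega$ is the exit time of $Y$, a stopping time by continuity. For $t<\tau_\Omega$ we have $Y_s\in\Omega$ for $s\le t$, where $\tilde b=b_{u,\varepsilon}$ and $\tilde\sigma=\sigma_{u,\varepsilon}$. Hence $X$ satisfies the integral form of \eqref{eq:intro-profile-sde} on $[0,\tau_\Omega)$, and also at $\tau_\Omega$ on $\{\tau_\Omega<\infty\}$ by continuity, with $X_{\tau_\Omega}\in\partial\Omega$. For uniqueness, let $X'$ be another strong solution up to its lifetime $\zeta'$ with the same Brownian motion. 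I take $\rho=\tau_\Omega\wedge\tau'_\Omega\wedge\zeta'$. Both stopped processes then solve the extended SDE with Lipschitz coefficients up to $\rho$, since their paths remain in $\Omega$ where the coefficients coincide. The Gronwall estimate applied to $\mathbb E\sup_{s\le t\wedge\rho}\abs{X_s-X'_s}^2$ shows that they agree up to $\rho$, and therefore their exit times coincide. This argument also shows that the solution does not depend on the choice of $\chi$ or $\sigma_0$.

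\emph{Step 3 (no explosion).} Before $\tau_\Omega$ the path stays in the bounded set $\Omega$, so $\sup_{t<\tau_\Omega}\abs{X_t}\le\sup_{z\in\overline\Omega}\abs z<\infty$. Equivalently, the lifetime in Proposition~\ref{prop:sde-background}, taken with $G=V$, exceeds $\tau_\Omega$, because $\overline\Omega\Subset V$.

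The main obstacle is the localization in Step 2. It requires careful handling of stochastic integrals up to a random time, writing $\int_0^{t\wedge\rho}$ as $\int_0^t\mathbf 1_{\{s\le\rho\}}$, and checking that the coefficients coincide on the closed set $\overline\Omega$ in Frobenius norm, so that the Gronwall constants are legitimate. The construction of the extension itself is routine, since Lemma~\ref{lem:lipschitz-coeff} already gives Lipschitz bounds on all of $\overline V$ and not just on $\Omega$.
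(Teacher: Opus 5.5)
Your proposal is correct and follows essentially the same route as the paper. Both arguments use a cutoff extension to globally Lipschitz, bounded coefficients, then Picard iteration with Gronwall uniqueness for the extended SDE. Both then restrict to $X_t=\widetilde X_{t\wedge\tau_\Omega}$, prove pathwise uniqueness up to exit by stopping at the smaller of the two exit times, and get nonexplosion from boundedness of $\Omega$. The differences are cosmetic: you place the cutoff directly in $V$, where Lemma~\ref{lem:lipschitz-coeff} already gives Lipschitz bounds, instead of building nested sets $V_1\subset V_2$; you allow a constant $\sigma_0$ outside; and you use Borel--Cantelli rather than completeness of $L^2(\Xi;C([0,T];\mathbb R^N))$.
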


\begin{proof} We prove strong existence, pathwise uniqueness, and absence of finite-time explosion before the exit time. By Lemma \ref{lem:lipschitz-coeff}, the maps $b_{u,\varepsilon}$ and $\sigma_{u,\varepsilon}$ are locally Lipschitz in a neighborhood of $\overline\Omega$, and they are bounded on $\overline\Omega$. We organize the proof into several steps.\\

By Assumption \ref{ass:framework}, the functions $p$ and $u$, and therefore the coefficient fields $b_{u,\varepsilon}$ and $\sigma_{u,\varepsilon}$, are defined on an open neighborhood $\mathcal{U}$ of $\overline\Omega$. Because $\overline\Omega$ is compact and $\mathcal{U}$ is open, there exist bounded open sets $V_1$ and $V_2$ such that
\begin{equation}\label{eq:thm73-nested-neighborhoods}
\overline\Omega \subset V_1, \quad \overline{V_1} \subset V_2, \quad \overline{V_2} \subset \mathcal{U}. 
\end{equation}
Since $b_{u,\varepsilon}$ and $\sigma_{u,\varepsilon}$ are locally Lipschitz on $\mathcal{U}$, compactness of $\overline{V_2}$ implies that there are finite constants $L_b,L_\sigma$ such that \begin{equation}\label{eq:thm73-local-b-Lipschitz}
|b_{u,\varepsilon}(x)-b_{u,\varepsilon}(y)| \leq L_b|x-y| 
\end{equation}
and
\begin{equation}\label{eq:thm73-local-sigma-Lipschitz}
\|\sigma_{u,\varepsilon}(x)-\sigma_{u,\varepsilon}(y)\| \leq L_\sigma|x-y| 
\end{equation}
for all $x,y\in\overline{V_2}$. Moreover, continuity on the compact set $\overline{V_2}$ gives finite bounds
\begin{equation}\label{eq:thm73-b-bound}
B_0 := \sup_{x\in\overline{V_2}}|b_{u,\varepsilon}(x)| <\infty 
\end{equation}
and
\begin{equation}\label{eq:thm73-sigma-bound}
S_0 := \sup_{x\in\overline{V_2}}\|\sigma_{u,\varepsilon}(x)\| <\infty. 
\end{equation} 
Choose a smooth cutoff function $\chi\in C_c^\infty(V_2)$ such that
\begin{equation}\label{eq:thm73-cutoff-range}
0\leq\chi\leq1 \quad\text{on }\mathbb R^N 
\end{equation}
and
\begin{equation}\label{eq:thm73-cutoff-one}
\chi(x)=1 \quad \text{for every }x\in\overline{V_1}. 
\end{equation}
Define the extended drift by
\begin{equation}\label{eq:thm73-extended-drift}
\widetilde b_{u,\varepsilon}(x) = 
\begin{cases} \chi(x)b_{u,\varepsilon}(x),&x\in V_2,\\ 0,&x\notin V_2, \end{cases} 
\end{equation}
and define the extended diffusion coefficient by
\begin{equation}\label{eq:thm73-extended-diffusion}
\widetilde\sigma_{u,\varepsilon}(x) =
\begin{cases}
\chi(x)\sigma_{u,\varepsilon}(x),&x\in V_2,\\ 0,&x\notin V_2.
\end{cases} 
\end{equation}
Because $\chi$ has compact support contained strictly inside $V_2$, the functions $\chi b_{u,\varepsilon}$ and $\chi\sigma_{u,\varepsilon}$ vanish in a neighborhood of $\partial V_2$. Their zero extensions are therefore continuous and Lipschitz across $\partial V_2$. We verify the Lipschitz estimate inside $V_2$. For $x,y\in V_2$,
\begin{align} \label{eq:thm73-extended-b-difference}
\widetilde b_{u,\varepsilon}(x)-\widetilde b_{u,\varepsilon}(y) &= \chi(x)b_{u,\varepsilon}(x)-\chi(y)b_{u,\varepsilon}(y) \nonumber\\ &= \chi(x)\bigl(b_{u,\varepsilon}(x)-b_{u,\varepsilon}(y)\bigr) + \bigl(\chi(x)-\chi(y)\bigr)b_{u,\varepsilon}(y).
\end{align}
Taking norms gives
\begin{align}\label{eq:thm73-extended-b-bound-first}
|\widetilde b_{u,\varepsilon}(x)-\widetilde b_{u,\varepsilon}(y)| \leq |\chi(x)| |b_{u,\varepsilon}(x)-b_{u,\varepsilon}(y)| + |\chi(x)-\chi(y)| |b_{u,\varepsilon}(y)|. 
\end{align}
Since
\[
|\chi(x)|\leq1, \quad |b_{u,\varepsilon}(x)-b_{u,\varepsilon}(y)|\leq L_b|x-y|, \quad \text{and} \quad|b_{u,\varepsilon}(y)|\leq B_0,
\]
we obtain
\begin{align}\label{eq:thm73-extended-b-bound-second}
|\widetilde b_{u,\varepsilon}(x)-\widetilde b_{u,\varepsilon}(y)| &\leq L_b|x-y| + B_0|\chi(x)-\chi(y)|. 
\end{align}
Since $\chi$ is smooth with compact support, its gradient is bounded. Define
\[
L_\chi := \sup_{z\in\mathbb R^N}|\nabla\chi(z)|.
\]
Then the MVT gives
\[
|\chi(x)-\chi(y)| \leq L_\chi|x-y|.
\]
Therefore,
\begin{equation}\label{eq:thm73-extended-b-Lipschitz}
|\widetilde b_{u,\varepsilon}(x)-\widetilde b_{u,\varepsilon}(y)| \leq \left( L_b+B_0L_\chi \right)|x-y|. 
\end{equation}
The same calculation gives
\begin{equation}\label{eq:thm73-extended-sigma-Lipschitz}
\|\widetilde\sigma_{u,\varepsilon}(x)-\widetilde\sigma_{u,\varepsilon}(y)\| \leq \left( L_\sigma+S_0L_\chi \right)|x-y|. 
\end{equation}
For the remaining pairs, let $x\in V_2$ and $y\notin V_2$. Since $\chi(y)=0$, the globally defined smooth cutoff satisfies
\[
|\widetilde b_{u,\varepsilon}(x)-\widetilde b_{u,\varepsilon}(y)|=|\chi(x)b_{u,\varepsilon}(x)|\leq B_0L_\chi|x-y|,
\]
and the corresponding diffusion bound is $S_0L_\chi|x-y|$. If both points lie outside $V_2$, both differences vanish. Combining these bounds with the estimates inside $V_2$ proves that both zero
extensions are globally Lipschitz. Thus there is a constant $L>0$ such that
\begin{equation}\label{eq:thm73-global-Lipschitz}
|\widetilde b_{u,\varepsilon}(x)-\widetilde b_{u,\varepsilon}(y)| + \|\widetilde\sigma_{u,\varepsilon}(x)-\widetilde\sigma_{u,\varepsilon}(y)\| \leq L|x-y| 
\end{equation}
for all $x,y\in\mathbb R^N$. The cutoff also gives global boundedness:
\begin{equation}\label{eq:thm73-global-b-bound}
|\widetilde b_{u,\varepsilon}(x)| \leq B_0 
\end{equation}
and
\begin{equation}\label{eq:thm73-global-sigma-bound}
\|\widetilde\sigma_{u,\varepsilon}(x)\| \leq S_0 
\end{equation}
for every $x\in\mathbb R^N$. In particular, the extended coefficients satisfy the standard linear-growth estimate
\begin{equation} \label{eq:thm73-linear-growth}
|\widetilde b_{u,\varepsilon}(x)|^2 + \|\widetilde\sigma_{u,\varepsilon}(x)\|^2 \leq K_0(1+|x|^2)
\end{equation}
for some constant $K_0>0$. Since the coefficients are bounded, one may take, for example, $K_0:=B_0^2+S_0^2$. Finally, because $\chi=1$ on $\overline{V_1}$, we have
\begin{equation}\label{eq:thm73-coefficients-agree}
\widetilde b_{u,\varepsilon}(x)=b_{u,\varepsilon}(x), \quad \widetilde\sigma_{u,\varepsilon}(x)=\sigma_{u,\varepsilon}(x) \quad \text{for every }x\in\overline{V_1}. 
\end{equation}
Since $\overline\Omega\subset V_1$, the extended and original coefficients agree on a neighborhood of $\overline\Omega$.\\
Consider
\begin{equation}\label{eq:thm73-extended-SDE}
\mathrm d\widetilde X_t = \widetilde b_{u,\varepsilon}(\widetilde X_t)\,\mathrm dt + \widetilde\sigma_{u,\varepsilon}(\widetilde X_t)\,\mathrm dW_t, \quad \widetilde X_0=x. 
\end{equation}
In integral form, this means
\begin{equation}\label{eq:thm73-extended-integral}
\widetilde X_t = x + \int_0^t \widetilde b_{u,\varepsilon}(\widetilde X_s)\,\mathrm ds + \int_0^t \widetilde\sigma_{u,\varepsilon}(\widetilde X_s)\,\mathrm dW_s. 
\end{equation}
We now construct a solution by Picard iteration.\\
Set
\[
X_t^{(0)}=x \quad \text{for every }t\geq0.
\]
Recursively define
\begin{equation}\label{eq:thm73-Picard-iteration}
X_t^{(n+1)} = x + \int_0^t \widetilde b_{u,\varepsilon}(X_s^{(n)}) ds + \int_0^t \widetilde\sigma_{u,\varepsilon}(X_s^{(n)}) dW_s. 
\end{equation}
Each process $X^{(n)}$ is adapted and has continuous sample paths. Inductively, each previous iterate is progressively measurable, and the globally bounded coefficients make its stochastic integrand square integrable on every finite interval. Indeed, the deterministic integral is continuous, and the It\^o integral has a continuous modification. We estimate the difference between two consecutive iterates. Define
\[
\Delta_t^{(n)} = X_t^{(n+1)}-X_t^{(n)}.
\]
Then, for $n\geq1$, 
\begin{align}\label{eq:thm73-Picard-difference}
\Delta_t^{(n)} = \int_0^t \left[ \widetilde b_{u,\varepsilon}(X_s^{(n)}) - \widetilde b_{u,\varepsilon}(X_s^{(n-1)}) \right]ds + \int_0^t \left[ \widetilde\sigma_{u,\varepsilon}(X_s^{(n)}) - \widetilde\sigma_{u,\varepsilon}(X_s^{(n-1)}) \right]dW_s. 
\end{align}
Applying
\begin{equation}\label{eq:binomial-2}
|a+b|^2\leq2|a|^2+2|b|^2
\end{equation}
to \eqref{eq:thm73-Picard-difference} gives
\begin{align}\label{eq:thm73-Picard-sup}
\sup_{0\leq r\leq t} |\Delta_r^{(n)}|^2 &\leq 2\sup_{0\leq r\leq t} \left| \int_0^r \left[ \widetilde b_{u,\varepsilon}(X_s^{(n)}) - \widetilde b_{u,\varepsilon}(X_s^{(n-1)}) \right]ds \right|^2 \nonumber\\ &\quad+ 2\sup_{0\leq r\leq t} \left| \int_0^r \left[ \widetilde\sigma_{u,\varepsilon}(X_s^{(n)}) - \widetilde\sigma_{u,\varepsilon}(X_s^{(n-1)}) \right]dW_s \right|^2. 
\end{align}
Taking expectations in \eqref{eq:thm73-Picard-sup}, then applying the Cauchy--Schwarz inequality, the Burkholder--Davis--Gundy (BDG) \cite{bg1970,davis1970} inequality (which gives a constant $C_{\mathrm{BDG}}>0$), and using global Lipschitz continuity, we obtain
\begin{align}\label{eq:thm73-Picard-estimate-first}
\mathbb{E}\left[\sup_{0\leq r\leq t} |\Delta_r^{(n)}|^2 \right] \leq 2tL^2 \int_0^t \mathbb{E}\left| X_s^{(n)}-X_s^{(n-1)} \right|^2 ds + 2C_{\mathrm{BDG}}L^2 \int_0^t \mathbb{E}\left| X_s^{(n)}-X_s^{(n-1)} \right|^2 ds. 
\end{align}
Since
\[
|X_s^{(n)}-X_s^{(n-1)}|^2 \leq \sup_{0\leq q\leq s} |X_q^{(n)}-X_q^{(n-1)}|^2,
\]
we obtain
\begin{align}\label{eq:thm73-Picard-recursive}
\mathbb{E}\left[ \sup_{0\leq r\leq t} |\Delta_r^{(n)}|^2 \right] &\leq C_T \int_0^t \mathbb{E}\left[\sup_{0\leq q\leq s} |\Delta_q^{(n-1)}|^2 \right] ds 
\end{align}
for \(t\leq T\), where $C_T := 2L^2(T+C_{\mathrm{BDG}})$. Repeated application of \eqref{eq:thm73-Picard-recursive} gives a factorial estimate of the form
\begin{equation}\label{eq:thm73-Picard-factorial}
\mathbb{E}\left[ \sup_{0\leq t\leq T} |\Delta_t^{(n)}|^2 \right] \leq \frac{C_0(C_TT)^n}{n!} 
\end{equation}
for a finite constant $C_0$ depending on the first Picard difference. Because
\[
\sum_{n=0}^\infty \frac{(C_TT)^n}{n!} = e^{C_TT} <\infty,
\]
the Picard sequence is Cauchy in in the Banach space $L^2(\Xi;C([0,T];\mathbb R^N))$, where $\Xi$ is the probability space fixed in Section \ref{sec:framework}. Therefore, there exists a continuous adapted version $\widetilde X$ agrees with the Banach space limit, so
\begin{equation} \label{eq:thm73-Picard-convergence}
\mathbb{E}\left[ \sup_{0\leq t\leq T} |X_t^{(n)}-\widetilde X_t|^2 \right] \rightarrow0.
\end{equation}

Global Lipschitz continuity gives
\[
|\widetilde b_{u,\varepsilon}(X_s^{(n)}) - \widetilde b_{u,\varepsilon}(\widetilde X_s)| \leq L|X_s^{(n)}-\widetilde X_s|.
\]
Therefore,
\begin{align}\label{eq:thm73-drift-limit}
\mathbb{E}\left| \int_0^t \left[\widetilde b_{u,\varepsilon}(X_s^{(n)}) - \widetilde b_{u,\varepsilon}(\widetilde X_s) \right]ds \right|^2 &\leq tL^2 \int_0^t \mathbb{E}|X_s^{(n)}-\widetilde X_s|^2 ds \rightarrow0. 
\end{align}
Similarly, the It\^o isometry gives
\begin{align} \label{eq:thm73-diffusion-limit}
\mathbb{E}\left| \int_0^t \left[ \widetilde\sigma_{u,\varepsilon}(X_s^{(n)}) - \widetilde\sigma_{u,\varepsilon}(\widetilde X_s) \right]dW_s \right|^2 &= \mathbb{E}\int_0^t \left\| \widetilde\sigma_{u,\varepsilon}(X_s^{(n)})- \widetilde\sigma_{u,\varepsilon}(\widetilde X_s) \right\|^2 ds\leq L^2 \int_0^t \mathbb{E}|X_s^{(n)}-\widetilde X_s|^2ds \nonumber\\ &\rightarrow0.
\end{align}
Passing to the limit in \eqref{eq:thm73-Picard-iteration} therefore yields
\[
\widetilde X_t = x + \int_0^t \widetilde b_{u,\varepsilon}(\widetilde X_s)ds + \int_0^t \widetilde\sigma_{u,\varepsilon}(\widetilde X_s)dW_s.
\]
The limit equation holds almost surely at every fixed time. The limit is progressively measurable because it is continuous and adapted, so its stochastic integral is well defined.  Thus $\widetilde X$ is a strong solution of the globally extended SDE on $[0,T]$.

Let $\widetilde X$ and $\widetilde Y$ be two strong solutions of the extended SDE, defined on the same filtered probability space, driven by the same Brownian motion, and satisfying
\[
\widetilde X_0=\widetilde Y_0=x.
\]
Define
\[
Z_t=\widetilde X_t-\widetilde Y_t.
\]
Subtracting the two integral equations gives
\begin{align}\label{eq:thm73-uniqueness-difference}
Z_t = \int_0^t \left[ \widetilde b_{u,\varepsilon}(\widetilde X_s) - \widetilde b_{u,\varepsilon}(\widetilde Y_s) \right] ds + \int_0^t \left[ \widetilde\sigma_{u,\varepsilon}(\widetilde X_s) - \widetilde\sigma_{u,\varepsilon}(\widetilde Y_s) \right]dW_s. 
\end{align}
Using the same estimates as in the Picard argument, we obtain
\begin{equation}\label{eq:thm73-uniqueness-Gronwall}
\mathbb{E}\left[ \sup_{0\leq r\leq t} |Z_r|^2 \right] \leq C_T \int_0^t \mathbb{E}\left[ \sup_{0\leq q\leq s} |Z_q|^2 \right]ds. 
\end{equation}
Define
\[
F(t) := \mathbb{E}\left[ \sup_{0\leq r\leq t} |Z_r|^2 \right].
\]
Then
\[
F(t)\leq C_T\int_0^tF(s)ds \quad \text{and}\quad F(0)=0.
\]
Gronwall's inequality gives
\[
F(t)=0 \quad \text{for every }t\in[0,T].
\]
Therefore,
\[
\mathbb{P}\left( \widetilde X_t=\widetilde Y_t \text{ for every }t\in[0,T] \right)=1.
\]
Since $T>0$ is arbitrary, pathwise uniqueness holds globally for the extended SDE.\\

The boundedness of the extended coefficients also prevents finite-time explosion. From the integral equation,
\[
\widetilde X_t = x + \int_0^t\widetilde b_{u,\varepsilon}(\widetilde X_s)ds + \int_0^t\widetilde\sigma_{u,\varepsilon}(\widetilde X_s)dW_s.
\]
Using \eqref{eq:binomial-2} for three terms, we obtain
\begin{align}\label{eq:thm73-moment-start}
\sup_{0\leq r\leq t}|\widetilde X_r|^2 \leq 3|x|^2 + 3\sup_{0\leq r\leq t} \left| \int_0^r\widetilde b_{u,\varepsilon}(\widetilde X_s)ds \right|^2 + 3\sup_{0\leq r\leq t} \left| \int_0^r\widetilde\sigma_{u,\varepsilon}(\widetilde X_s)dW_s \right|^2. 
\end{align}
Using the boundedness estimates
\[
|\widetilde b_{u,\varepsilon}|\leq B_0, \quad \|\widetilde\sigma_{u,\varepsilon}\|\leq S_0,
\]
the deterministic integral satisfies
\[
\sup_{0\leq r\leq t} \left| \int_0^r\widetilde b_{u,\varepsilon}(\widetilde X_s)ds \right|^2 \leq B_0^2t^2.
\]
The BDG inequality gives
\begin{align}\label{eq:thm73-moment-martingale}
\mathbb{E}\left[ \sup_{0\leq r\leq t} \left| \int_0^r\widetilde\sigma_{u,\varepsilon}(\widetilde X_s)dW_s \right|^2 \right] \leq C_{\mathrm{BDG}} \mathbb{E}\int_0^t \|\widetilde\sigma_{u,\varepsilon}(\widetilde X_s)\|^2 ds \leq C_{\mathrm{BDG}}S_0^2t. 
\end{align}
Therefore,
\begin{equation} \label{eq:thm73-moment-final}
\mathbb{E}\left[\sup_{0\leq r\leq t} |\widetilde X_r|^2 \right] \leq 3|x|^2 + 3B_0^2t^2 + 3C_{\mathrm{BDG}}S_0^2t <\infty.
\end{equation}
In particular, the extended process cannot reach infinite Euclidean norm in finite time. More explicitly, set $\rho_R=\inf\{t\geq0:|\widetilde X_t|\geq R\}$.
For $R>|x|$, Markov's inequality and \eqref{eq:thm73-moment-final} imply
\[
 \mathbb{P}(\rho_R\leq T) \leq \frac{3|x|^2+3B_0^2T^2+3C_{\mathrm{BDG}}S_0^2T}{R^2} \rightarrow 0.
\]
Hence $\rho_R\uparrow\infty$ almost surely, which is the nonexplosion conclusion for the globally extended SDE.\\

Using the global continuous process $\widetilde X$, define
\begin{equation}\label{eq:thm73-exit-time}
\tau_\Omega = \inf\left\{ t>0: \widetilde X_t\notin\Omega \right\}. 
\end{equation}
Because $\Omega$ is open and $\widetilde X$ is adapted with continuous paths, $\tau_\Omega$ is a stopping time. For every $t<\tau_\Omega$, we have $\widetilde X_t\in\Omega$. Since
$\Omega\subset V_1$ and the extended coefficients agree with the original coefficients on $V_1$, it follows that
\[
\widetilde b_{u,\varepsilon}(\widetilde X_t) = b_{u,\varepsilon}(\widetilde X_t) \quad \text{and} \quad \widetilde\sigma_{u,\varepsilon}(\widetilde X_t) = \sigma_{u,\varepsilon}(\widetilde X_t)
\]
for every $t<\tau_\Omega$. Define the stopped process
\begin{equation}\label{eq:thm73-stopped-process}
X_t = \widetilde X_{t\wedge\tau_\Omega}. 
\end{equation}
For $t<\tau_\Omega$, $X_t=\widetilde X_t$, and therefore
\[
X_t = x + \int_0^t b_{u,\varepsilon}(X_s)ds + \int_0^t \sigma_{u,\varepsilon}(X_s) dW_s.
\]
Equivalently, in stopped form,
\begin{align} \label{eq:thm73-stopped-integral-equation}
X_t &= x + \int_0^{t\wedge\tau_\Omega} b_{u,\varepsilon}(X_s)ds + \int_0^{t\wedge\tau_\Omega} \sigma_{u,\varepsilon}(X_s)dW_s.
\end{align}
Thus $X$ is a strong solution of the original regularized SDE up to the exit time $\tau_\Omega$.\\

Let $X$ and $Y$ be two strong solutions of the profile-frozen SDE \eqref{eq:intro-profile-sde}, defined on the same filtered probability space, driven by the same Brownian motion, and starting from the same point $x\in\Omega$. Let
\[
\tau_\Omega^X = \inf\{t>0:X_t\notin\Omega\} \quad \text{and} \quad \tau_\Omega^Y = \inf\{t>0:Y_t\notin\Omega\}.
\]
Define the common stopping time by $\tau := \tau_\Omega^X\wedge\tau_\Omega^Y$. Before time $\tau$, both processes remain in $\Omega$, where the original coefficients are Lipschitz. The difference satisfies
\begin{align}\label{eq:thm73-stopped-uniqueness}
X_{t\wedge\tau}-Y_{t\wedge\tau} = \int_0^{t\wedge\tau} \left[ b_{u,\varepsilon}(X_s)-b_{u,\varepsilon}(Y_s) \right]ds + \int_0^{t\wedge\tau}
\left[\sigma_{u,\varepsilon}(X_s)-\sigma_{u,\varepsilon}(Y_s) \right]dW_s. 
\end{align}
The same BDG, Cauchy--Schwarz, and Gronwall argument used above gives
\[
\mathbb{E}\left[\sup_{0\leq s\leq t } |X_{s\wedge\tau}-Y_{s\wedge\tau}|^2 \right] = 0.
\]
Therefore, $X_{t\wedge\tau} = Y_{t\wedge\tau}$ almost surely for every $t\geq0$. Since the paths coincide up to the first exit of either process, continuity implies that the exit times coincide; that is, $\tau_\Omega^X=\tau_\Omega^Y$ almost surely. Hence $X_t=Y_t$ for every $t<\tau_\Omega$ almost surely. This proves pathwise uniqueness up to the exit time.\\
A finite-time explosion before exit would mean that there is a finite random time $\zeta<\tau_\Omega$ such that
\[
|X_t|\longrightarrow\infty \quad \text{as }t\uparrow\zeta.
\]
This cannot occur for two independent reasons. First, the process $X$ is the restriction of the global, non-explosive process $\widetilde X$. The process $\widetilde X_t$ is finite for every finite $t$ almost surely. Second, for every $t<\tau_\Omega$, $X_t\in\Omega$.  Because $\Omega$ is bounded, there is a number $R_\Omega<\infty$ such that
\[
|z|\leq R_\Omega \quad \text{ for every }z\in\Omega.
\]
Therefore,
\[
|X_t|\leq R_\Omega \quad \text{for every }t <\tau_\Omega.
\]
Thus $|X_t|$ cannot tend to infinity while the process remains in $\Omega$. Consequently, no finite-time explosion occurs before $\tau_\Omega$. We have proved that, for every $x\in\Omega$, the regularized SDE admits a strong solution up to the first exit time, that the solution is pathwise unique up to that time, and that no finite-time explosion can occur before exit. This completes the proof.
\end{proof}

As a conclusion, we have the following initial-point stability result for $\widetilde X$.
\begin{corollary}\label{cor:sde-stability}
Under Assumption \ref{ass:classical-framework}, let $\widetilde X^x$ and $\widetilde X^y$ solve the same globally extended SDE \eqref{eq:thm73-extended-SDE}, with one fixed choice of cutoff and extended coefficients, with initial values $x$ and $y$ and the same Brownian motion.  For every $T>0$ there is $C_T>0$ such that
\[
\mathbb{E}\left[\sup_{0\leq t\leq T}\abs{\widetilde X_t^x-\widetilde X_t^y}^2\right] \leq C_T\abs{x-y}^2.
\]
\end{corollary}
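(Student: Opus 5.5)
The plan is to reuse the globally Lipschitz extension built in the proof of Theorem~\ref{thm:sde-existence}, together with the same Burkholder--Davis--Gundy/Gronwall mechanism used there for pathwise uniqueness, now applied with nonzero initial discrepancy. Fix the cutoff $\chi$ and the extended coefficients $\widetilde b_{u,\varepsilon}$, $\widetilde\sigma_{u,\varepsilon}$. By \eqref{eq:thm73-global-Lipschitz} they satisfy a global Lipschitz bound with a single constant $L$, in the Frobenius norm for the diffusion part. Since $\widetilde X^x$ and $\widetilde X^y$ are strong solutions driven by the same $W$, subtracting the integral equations \eqref{eq:thm73-extended-integral} gives, for $Z_t:=\widetilde X^x_t-\widetilde X^y_t$,
\[
Z_t=(x-y)+\int_0^t\bigl[\widetilde b_{u,\varepsilon}(\widetilde X^x_s)-\widetilde b_{u,\varepsilon}(\widetilde X^y_s)\bigr]ds+\int_0^t\bigl[\widetilde\sigma_{u,\varepsilon}(\widetilde X^x_s)-\widetilde\sigma_{u,\varepsilon}(\widetilde X^y_s)\bigr]dW_s .
\]

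Next I would apply $|a+b+c|^2\leq 3(|a|^2+|b|^2+|c|^2)$, take the supremum over $[0,t]$, and take expectations.
\begin{itemize}[label=--]
\item For the drift, Cauchy--Schwarz and the Lipschitz bound give at most $3tL^2\int_0^t\mathbb{E}|Z_s|^2ds$.
\item For the martingale term, BDG (or Doob plus the It\^o isometry) gives at most $3C_{\mathrm{BDG}}L^2\int_0^t\mathbb{E}|Z_s|^2ds$.
\end{itemize}
With $F(t)=\mathbb{E}[\sup_{0\leq r\leq t}|Z_r|^2]$ and $t\leq T$, this yields
\[
F(t)\leq 3|x-y|^2+3L^2(T+C_{\mathrm{BDG}})\int_0^tF(s)\,ds .
\]
Gronwall's inequality then gives $F(T)\leq 3|x-y|^2\exp\bigl(3L^2(T+C_{\mathrm{BDG}})T\bigr)$. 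So $C_T$ can be taken to be this prefactor.

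The only genuine point of care is that Gronwall requires $F$ to be finite and measurable in $t$. This follows from the moment bound \eqref{eq:thm73-moment-final}, applied to each solution separately: $F(t)\leq 2\bigl(\mathbb{E}\sup|\widetilde X^x|^2+\mathbb{E}\sup|\widetilde X^y|^2\bigr)<\infty$, and $F$ is nondecreasing. Beyond that there is no real obstacle. I would note that the constant depends on the fixed cutoff through $L$, which is why the statement insists on one fixed choice of extension. The bound therefore does not directly describe the stopped processes, whose exit times may differ.
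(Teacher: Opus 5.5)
Your proposal is correct and follows the paper's proof essentially step for step. You subtract the integral equations, split into three terms, use Cauchy--Schwarz for the drift and BDG for the martingale part, and apply Gronwall, arriving at the same constant $C_T=3\exp\bigl(3L^2(T+C_{\mathrm{BDG}})T\bigr)$; your finiteness check for $F$ via the moment bound \eqref{eq:thm73-moment-final} is also the point the paper invokes before applying Gronwall.
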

\begin{proof}
Subtract the two integral equations:
\begin{align*}
\widetilde X_t^x-\widetilde X_t^y= x-y + \int_0^t \left[ \widetilde b_{u,\varepsilon}(\widetilde X_s^x)- \widetilde b_{u,\varepsilon}(\widetilde X_s^y) \right]ds + \int_0^t \left[ \widetilde\sigma_{u,\varepsilon}(\widetilde X_s^x) -\widetilde\sigma_{u,\varepsilon}(\widetilde X_s^y) \right]dW_s.
\end{align*}
Using \eqref{eq:binomial-2} along with the Cauchy--Schwarz inequality, the BDG inequality, and the common global Lipschitz constant $L$, we obtain for $0\leq t\leq T$
\[
f(t) \leq 3|x-y|^2 + C\int_0^tf(s)ds,
\]
where
\[
f(t) = \mathbb{E}\left[\sup_{0\leq r\leq t}|\widetilde X_r^x-\widetilde X_r^y|^2 \right].
\]
Here one may take $C=3L^2(T+C_{\mathrm{BDG}})$, independent of $x$ and $y$; the moment bound above ensures that $f$ is finite on $[0,T]$. Finally, Gronwall's inequality yields
\[
f(T) \leq 3e^{CT}|x-y|^2
\]
where $C_T=3e^{CT}$.
\end{proof}

\subsection{Infinitesimal generator}

\begin{theorem}\label{thm:companion-generator}
Under Assumption \ref{ass:classical-framework}, let $X_{t\wedge\tau_\Omega}$ denote the solution of \eqref{eq:intro-profile-sde} stopped at the boundary, including its finite exit point.  Its interior infinitesimal generator, understood as a pointwise small-time limit at $x\in\Omega$, is
\begin{equation}\label{eq:companion-L}
\mathcal{L}_{u,\varepsilon}\phi(x)=b_{u,\varepsilon}(x)\cdot\nabla\phi(x)+D_{u,\varepsilon}(x)\fro D^2\phi(x), \quad \phi\in C^2(\mathcal{U}),\quad x\in\Omega.
\end{equation}
The boundary-stopped process is constant after exit; the given differential expression describes its motion in the interior.  In particular,
\begin{equation}\label{eq:profile-reproduction}
\mathcal{L}_{u,\varepsilon}u=\mathrm{div} A_\varepsilon(x,\nabla u)\,\text{ in }\Omega.
\end{equation}
\end{theorem}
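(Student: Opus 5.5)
The plan is to compute the generator directly from It\^o's formula applied to the stopped process, and then to obtain profile reproduction as a purely algebraic consequence of Lemma~\ref{thm:reg-expansion}.

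\textbf{Step 1: the generic generator.} Fix $x\in\Omega$ and $\phi\in C^2(\mathcal U)$. Since $\Omega$ is open, choose $r>0$ with $\overline{B_r(x)}\subset\Omega$. Let $\rho=\inf\{t\ge0: X_t\notin B_r(x)\}$, so that $\rho\le\tau_\Omega$. On $\overline{B_r(x)}$ the following are bounded and continuous:
\begin{itemize}
\item $\phi$, $\nabla\phi$ and $D^2\phi$;
\item the coefficients $b_{u,\varepsilon}$ and $\sigma_{u,\varepsilon}$, by Lemma~\ref{lem:lipschitz-coeff}.
\end{itemize}
Apply \eqref{eq:ito-pathwise} at the bounded stopping time $t\wedge\rho$, using the solution from Theorem~\ref{thm:sde-existence}. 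The stochastic integral has a bounded integrand up to $t\wedge\rho$, so it is a true martingale with zero mean. This gives
\[
\mathbb E^x\phi(X_{t\wedge\rho})-\phi(x)=\mathbb E^x\int_0^{t\wedge\rho}\mathcal L\phi(X_s)\,ds,
\qquad
\mathcal L\phi=b_{u,\varepsilon}\cdot\nabla\phi+\tfrac12 a_{u,\varepsilon}\fro D^2\phi .
\]
By \eqref{eq:profile-covariance}, $\tfrac12 a_{u,\varepsilon}=D_{u,\varepsilon}$, so $\mathcal L$ is exactly the operator in \eqref{eq:companion-L}.

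\textbf{Step 2: the small-time limit.} This is the only step needing care: we must pass from $X_{t\wedge\rho}$ to $X_{t\wedge\tau_\Omega}$ and divide by $t$. Write
\[
\mathbb E^x\phi(X_{t\wedge\tau_\Omega})-\phi(x)=\bigl[\mathbb E^x\phi(X_{t\wedge\rho})-\phi(x)\bigr]+\mathbb E^x\bigl[\phi(X_{t\wedge\tau_\Omega})-\phi(X_{t\wedge\rho});\,\rho<t\bigr].
\]
\begin{itemize}
\item The second term is bounded by $2\sup_{\overline\Omega}|\phi|\,\mathbb P(\rho<t)$, since $X$ stays in $\overline\Omega$.
\item By the BDG estimate with the bounds $B_0,S_0$ used in the proof of Theorem~\ref{thm:sde-existence}, applied with a fourth moment, $\mathbb E\sup_{s\le t}|X_{s\wedge\rho}-x|^4\le C t^2$.
\item Markov's inequality then gives $\mathbb P(\rho<t)\le Ct^2/r^4=o(t)$.
\end{itemize}
For the first term, write $\frac1t\int_0^{t\wedge\rho}\mathcal L\phi(X_s)\,ds-\mathcal L\phi(x)$ as two pieces:
\begin{itemize}
\item the piece on $\{\rho<t\}$, whose contribution is $O(\mathbb P(\rho<t))$ by boundedness of $\mathcal L\phi$;
\item a remainder controlled by $\sup_{s\le t}|\mathcal L\phi(X_{s\wedge\rho})-\mathcal L\phi(x)|$.
\end{itemize}
The remainder tends to $0$ in expectation by dominated convergence, using path continuity and continuity of $\mathcal L\phi$ at $x$. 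Dividing by $t$ and letting $t\downarrow0$ yields
\[
\lim_{t\downarrow0}\frac{\mathbb E^x\phi(X_{t\wedge\tau_\Omega})-\phi(x)}{t}=\mathcal L_{u,\varepsilon}\phi(x).
\]
The stopped process is constant after $\tau_\Omega$ by the definition \eqref{eq:thm73-stopped-process}, which justifies the statement about motion only in the interior.

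\textbf{Step 3: profile reproduction.} Take $\phi=u$, which is admissible since $u\in C^{2,1}(\mathcal U)\subset C^2(\mathcal U)$. By the definitions \eqref{eq:intro-frozen-coefficients}--\eqref{eq:intro-frozen-coefficients-a},
\[
\mathcal L_{u,\varepsilon}u(x)=c_\varepsilon(x,\nabla u(x))\cdot\nabla u(x)+D_\varepsilon(x,\nabla u(x))\fro D^2u(x).
\]
Assumption~\ref{ass:classical-framework} implies Assumption~\ref{ass:classical-deterministic} on $\Omega$, so Lemma~\ref{thm:reg-expansion} identifies the right-hand side with $\mathrm{div}A_\varepsilon(x,\nabla u)$ at every $x\in\Omega$, including critical points. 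This proves \eqref{eq:profile-reproduction}.
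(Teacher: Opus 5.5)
Your proposal is correct. Steps 1 and 3 match the paper's proof: It\^o's formula on a ball compactly inside $\Omega$, cancellation of the factor $2$ via \eqref{eq:profile-covariance}, and identification of $\mathcal L_{u,\varepsilon}u$ with the right-hand side of \eqref{eq:reg-expansion} through Lemma~\ref{thm:reg-expansion}.

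The difference is in Step 2, where you pass from the ball to the boundary-stopped process. The paper does not transfer the localized limit. Since $\phi\in C^2(\mathcal U)$ and the coefficients are bounded on $\overline\Omega$, it reruns the same argument with $H=\Omega$. It applies It\^o's formula at $t\wedge\tau_\Omega$ and uses dominated convergence on $\frac1t\int_0^{t\wedge\tau_\Omega}\mathcal L_{u,\varepsilon}\phi(X_s)\,ds$. This quantity converges pathwise to $\mathcal L_{u,\varepsilon}\phi(x)$ because $\tau_\Omega>0$ a.s., and it is bounded by $\sup_{\overline\Omega}|\mathcal L_{u,\varepsilon}\phi|$. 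No rate for any exit probability is needed.

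You keep the ball and bound the discrepancy by $2\sup_{\overline\Omega}|\phi|\,\mathbb P(\rho<t)$. This forces you to prove $\mathbb P(\rho<t)=o(t)$, and your fourth-moment BDG bound does so correctly. The fourth moment is essential: a second-moment Chebyshev bound gives only $\mathbb P(\rho<t)=O(t)$.

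What your route buys is locality. It uses second derivatives of $\phi$ only near $x$ and only boundedness of $\phi$ on $\overline\Omega$, so the same limit holds for a larger class of test functions. It also gives the quantitative estimate $\mathbb P(\rho<t)=O(t^2)$. For the theorem as stated, the paper's direct dominated-convergence argument is shorter.
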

\begin{proof} We prove the result in two main parts. First, we derive the infinitesimal generator of the regularized profile-dependent It\^o diffusion. Second, we apply that generator to the same profile \(u\) used to define the drift and covariance coefficients. \\
For notational convenience, throughout the proof, we  write
\[
b(x):=b_{u,\varepsilon}(x), \quad \sigma(x):=\sigma_{u,\varepsilon}(x), \quad a(x):=\sigma(x)\sigma(x)^T.
\]

Fix $x\in\Omega$ and initially let $\phi\in C^2(\Omega)$. Choose an open ball $H$ with $x\in H$ and $K_H:=\overline H\Subset\Omega$. We stop on leaving the open set $H$
\begin{equation} \label{eq:thm74-compact-exit-time}
\tau_H=\inf\{t\geq 0:X_t\notin H\}.
\end{equation}
Continuity gives $\tau_H>0$ almost surely and $X_{t\wedge\tau_H}\in K_H$. The coefficients and the derivatives of $\phi$ are bounded on $K_H$. We may therefore apply It\^o's formula up to $t\wedge\tau_H$.  At the end, for the theorem's test functions $\phi\in C^2(\mathcal{U})$, the same argument applies with $H$ replaced by $\Omega$ because $\overline\Omega\Subset\mathcal{U}$.\\
Let $X_t = \begin{pmatrix} X_t^1 \dots X_t^N\end{pmatrix}^T$, and let $W_t$ be an $m$-dimensional Brownian motion $W_t = \begin{pmatrix} W_t^1 \dots W_t^m \end{pmatrix}^T$.
For this componentwise calculation one may allow an $N\times m$ diffusion matrix; in the present construction $m=N$. The following differential identities are used before $\tau_H$, and their integrated stopped forms are used below. The componentwise SDE is
\begin{equation}\label{eq:thm74-component-SDE}
dX_t^i = b_i(X_t) dt + \sum_{k=1}^m \sigma_{ik}(X_t) dW_t^k, \quad i=1,\ldots,N. 
\end{equation}
Here $b_i$ is the $i$-th component of the drift vector, and $\sigma_{ik}$ is the $(i,k)$-entry of the diffusion matrix.\\
For a $C^2$ function $\phi:\Omega\to\mathbb R$,  the multidimensional It\^o formula gives
\begin{align}\label{eq:thm74-Ito-fully-expanded}
d\phi(X_t) = \left[ \sum_{i=1}^N b_i(X_t)\phi_{x_i}(X_t) \right. \left. + \frac12 \sum_{i,j=1}^N a_{ij}(X_t)\phi_{x_ix_j}(X_t) \right] dt
+ \sum_{i=1}^N \sum_{k=1}^m \phi_{x_i}(X_t) \sigma_{ik}(X_t) dW_t^k. 
\end{align}
In vector--matrix notation \eqref{eq:thm74-Ito-fully-expanded} becomes 
\begin{equation} \label{eq:thm74-Ito-matrix-form}
d\phi(X_t) = \left[ b(X_t)\cdot\nabla\phi(X_t) + \frac12 a(X_t):D^2\phi(X_t) \right]dt + \nabla\phi(X_t)^T \sigma(X_t)dW_t,
\end{equation}
where $D^2\phi = \left( \phi_{x_ix_j} \right)_{i,j=1}^N$ is the Hessian, and $a:D^2\phi=\sum_{i,j=1}^N a_{ij}\phi_{x_ix_j}$.\\
The infinitesimal generator \(\mathcal L\) of  $X_t$ is the differential operator multiplying $dt$ in It\^o's formula \eqref{eq:thm74-Ito-matrix-form}; that is,
\begin{equation}\label{eq:thm74-general-generator}
\mathcal L\phi(x) = b(x)\cdot\nabla\phi(x) + \frac12 a(x):D^2\phi(x). 
\end{equation}
We now insert the profile-dependent coefficients.\\
By construction,
\[
a_{u,\varepsilon}(x) = \sigma_{u,\varepsilon}(x) \sigma_{u,\varepsilon}(x)^T = 2D_\varepsilon(x,\nabla u(x)).
\]
Therefore,
\begin{align}\label{eq:thm74-factor-two-cancellation}
\frac12 a_{u,\varepsilon}(x):D^2\phi(x) = \frac12 \left[ 2D_\varepsilon(x,\nabla u(x)) \right]:D^2\phi(x) = D_\varepsilon(x,\nabla u(x)):D^2\phi(x). 
\end{align}
The factor $2$ in the covariance cancels the factor $1/2$ in the It\^o generator. Using these in formula \eqref{eq:thm74-general-generator} gives
\begin{equation}\label{eq:thm74-profile-generator}
\mathcal L_{u,\varepsilon}\phi(x) = b_{u,\varepsilon}(x)\cdot\nabla\phi(x) + D_\varepsilon(x,\nabla u(x)):D^2\phi(x).
\end{equation}
This identifies the candidate generator, the pointwise small-time limit is justified next.\\
For the boundary-stopped process $X_{t\wedge\tau_\Omega}$ and $\phi\in C^2(\mathcal{U})$, the pointwise interior generator is the limit
\begin{equation}\label{eq:thm74-small-time-generator}
\mathcal L_{u,\varepsilon}\phi(x)=\lim_{t\downarrow0}\frac{\mathbb{E}^x[\phi(X_{t\wedge\tau_\Omega})]-\phi(x)}{t},\quad x\in\Omega.
\end{equation}
We first prove the localized version of \eqref{eq:thm74-small-time-generator} for $H\Subset\Omega$ and $\phi\in C^2(\Omega)$. Thus integrating \eqref{eq:thm74-Ito-matrix-form} up to $t\wedge\tau_H$ gives
\begin{align}\label{eq:thm74-integrated-Ito}
\phi(X_{t\wedge\tau_H})-\phi(x)=\int_0^{t\wedge\tau_H} \left[b(X_s)\cdot\nabla\phi(X_s) + \frac12a(X_s):D^2\phi(X_s)\right]ds+\int_0^{t\wedge\tau_H}\nabla\phi(X_s)^\top\sigma(X_s)dW_s.
\end{align}
Because
\[
\mathbb{E}^x\int_0^{t\wedge\tau_H}|\sigma(X_s)^\top\nabla\phi(X_s)|^2ds\leq t\sup_{y\in K_H}|\sigma(y)^\top\nabla\phi(y)|^2<\infty,
\]
the stochastic integral is square integrable on every finite time interval, and hence, its expectation is zero. Thus, taking expectations and then dividing by $t>0$ gives
\begin{align}\label{eq:thm74-generator-average}
\frac{\mathbb{E}^x[\phi(X_{t\wedge\tau_H})]-\phi(x) }{t} &=\mathbb{E}^x \left[ \frac1t \int_0^{t\wedge\tau_H} \mathcal L_{u,\varepsilon}\phi(X_s)ds \right]. 
\end{align}
As $t\downarrow0$, continuity of $X_s$ and $X_0=x$ imply $X_s\rightarrow x$ almost surely. Since $\mathcal L_{u,\varepsilon}\phi$ is continuous, it follows that $\mathcal L_{u,\varepsilon}\phi(X_s) \rightarrow \mathcal L_{u,\varepsilon}\phi(x)$. As well, the absolute value of the integrand is bounded by $\sup_{K_H}|\mathcal L_{u,\varepsilon}\phi|$, independently of $t$.
Thus the Dominated Convergence Theorem proves the localized generator limit of \eqref{eq:thm74-small-time-generator}. For $\phi\in C^2(\mathcal{U})$, the coefficients, $\phi$, and its first two
derivatives are bounded on $\overline\Omega$.  Thus the preceding It\^o and dominated-convergence argument applies directly with $H=\Omega$, proving \eqref{eq:thm74-small-time-generator}.  This
establishes the theorem's first assertion; that is, the operator in \eqref{eq:thm74-profile-generator} is indeed the infinitesimal generator of the diffusion.\\
Now, we shall apply the generator to the profile $u$.
Set $\phi=u$. Then formula \eqref{eq:thm74-profile-generator} with $b_{u,\varepsilon}(x) = c_\varepsilon(x,\nabla u(x))$ gives 
\begin{align}\label{eq:thm74-generator-on-profile}
\mathcal L_{u,\varepsilon}u = c_\varepsilon(x,\nabla u)\cdot\nabla u + D_\varepsilon(x,\nabla u):D^2u. 
\end{align}
The right-hand side of \eqref{eq:thm74-generator-on-profile} is exactly the right-hand side of \eqref{eq:reg-expansion}. Therefore,
\[
\mathcal L_{u,\varepsilon}u = \operatorname{div} A_\varepsilon(x,\nabla u)
\]
which is formula \eqref{eq:profile-reproduction}. Writing $r=r_\varepsilon(\nabla u)$, the identity is
\begin{equation}\label{eq:profile-reproduction-aa}
 \mathcal L_{u,\varepsilon}u=r^{p(x)-2}\Delta u+(p(x)-2)r^{p(x)-4}\nabla u^\top D^2u\,\nabla u+r^{p(x)-2}\log r\,\nabla p\cdot\nabla u=\mathrm{div} A_\varepsilon(x,\nabla u).
\end{equation}
All terms remain defined at $\nabla u=0$ because $r\geq\varepsilon>0$. This completes the proof.
\end{proof}
\begin{remark}[\textit{Frozen-profile generator versus Fr\'echet linearization}]\label{rem:frechet-linearization} 
We would like to note that the profile-dependent stochastic generator \(\mathcal L_{u,\varepsilon}\) and the full Fr\'echet linearization $\mathcal N_\varepsilon'(u)$ are generally different operators. 
For a precise Fr\'echet statement, fix a ball $B\Subset\mathcal U$
and consider the map
\[
\mathcal N_\varepsilon:C^2(\overline B)\longrightarrow C(\overline B),
\quad
\mathcal N_\varepsilon(w)=\operatorname{div}A_\varepsilon(x,\nabla w).
\]
Its Fr\'echet derivative at the restricted profile $u$ is
\begin{equation}\label{eq:frechet-linearization}
\mathcal N_\varepsilon'(u)[v]=\operatorname{div}\left(D_\varepsilon(x,\nabla u)\nabla v\right),\quad v\in C^2(\overline B).
\end{equation}
For a matrix field $D$, using the column-divergence convention given in \eqref{eq:framework-matrix-divergence}, and expanding \eqref{eq:frechet-linearization} gives
\[
\mathcal N_\varepsilon'(u)[v] = D_\varepsilon(x,\nabla u):D^2v + \operatorname{div} \bigl(D_\varepsilon(x,\nabla u)\bigr) \cdot\nabla v.
\]
By contrast, the stochastic generator is
\[
\mathcal L_{u,\varepsilon}v = D_\varepsilon(x,\nabla u):D^2v + c_\varepsilon(x,\nabla u)\cdot\nabla v.
\]
The total spatial divergence of the composed matrix generally differs from $c_\varepsilon(x,\nabla u)$ which explains the distinction between the two operators. The defining property of the stochastic generator is instead
\[
\mathcal L_{u,\varepsilon}u = \mathcal N_\varepsilon(u).
\]
Therefore, $\mathcal L_{u,\varepsilon}$ should be understood as the profile-frozen nondivergence operator whose action on the defining profile $u$ reproduces the regularized nonlinear divergence operator \eqref{eq:intro-main-operator}.
\end{remark}

\subsection{Consequences and stochastic identities}

\begin{corollary}\label{cor:generator-eigenprofile}
Under Assumption \ref{ass:classical-framework}, suppose
\[
 -\mathrm{div} A_\varepsilon(x,\nabla u)=\lambda\abs{u}^{p(x)-2}u\,\text{ in }\Omega.
\]
Here $\lambda\in\mathbb R$, and $|s|^{ p(x)-2}s$ is assigned the value zero at $s=0$, its continuous extension since $p(x)>1$. Then
\[
 \mathcal{L}_{u,\varepsilon}u=-\lambda\abs{u}^{p(x)-2}u.
\]
\end{corollary}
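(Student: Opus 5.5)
The plan is to reduce the statement directly to the profile-reproduction identity \eqref{eq:profile-reproduction} of Theorem~\ref{thm:companion-generator}. Under Assumption~\ref{ass:classical-framework}, that theorem gives, at every $x\in\Omega$,
\[
\mathcal{L}_{u,\varepsilon}u(x)=\mathrm{div} A_\varepsilon(x,\nabla u(x)).
\]
I would first record that both sides are classical, pointwise-defined continuous functions on $\Omega$. The regularized expansion of Lemma~\ref{thm:reg-expansion} holds throughout $\Omega$, including critical points, because $r_\varepsilon\geq\varepsilon>0$. This lets the hypothesized equation be read pointwise rather than in any weak sense.

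Next I would substitute the hypothesis $\mathrm{div} A_\varepsilon(x,\nabla u)=-\lambda\abs{u}^{p(x)-2}u$ into the right-hand side. The right-hand side of the hypothesis must be well defined at every point of $\Omega$. This is the one place needing care, namely the zero set $\{u=0\}$. There the expression $\abs{u}^{p(x)-2}u$ is assigned the value $0$. That choice is consistent because $p(x)>1$ gives $\abs{s}^{p(x)-1}\to0$ as $s\to0$. Consequently $x\mapsto\abs{u(x)}^{p(x)-2}u(x)$ is continuous on $\Omega$, since $p$ and $u$ are continuous. Also, both sides of the hypothesized identity vanish compatibly with the left side wherever $u=0$, so no separate case analysis is needed beyond noting the convention.

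Combining the two pointwise identities then gives $\mathcal{L}_{u,\varepsilon}u=-\lambda\abs{u}^{p(x)-2}u$ at every $x\in\Omega$. There is no substantive obstacle here, since the corollary is a direct substitution. The only point to state explicitly is that the conclusion has the conditional character noted in the introduction. It presupposes that the prescribed $u\in C^{2,1}(\mathcal U)$ satisfies the eigenvalue equation classically, and it does not assert the existence of such an eigenprofile. Nor does it assert anything about the sign of $\lambda$.
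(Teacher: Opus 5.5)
Your proposal is correct and follows the paper's own argument: combine the profile-reproduction identity $\mathcal{L}_{u,\varepsilon}u=\mathrm{div} A_\varepsilon(x,\nabla u)$ from Theorem~\ref{thm:companion-generator} with the hypothesized equation, pointwise in $\Omega$. Your extra discussion of the zero set $\{u=0\}$ is not needed, and the claim that both sides ``vanish compatibly'' there is simply a consequence of the hypothesis rather than something to check, because the substitution is purely pointwise.
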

\begin{proof}
By Theorem \ref{thm:companion-generator},
\[
\mathcal L_{u,\varepsilon}u = \mathrm{div} A_\varepsilon(x,\nabla u).
\]
The eigenprofile equation gives
\[
\mathrm{div} A_\varepsilon(x,\nabla u) = -\lambda |u|^{p(x)-2}u.
\]
Combining the two identities yields
\[
\mathcal L_{u,\varepsilon}u = -\lambda |u|^{p(x)-2}u.
\]
\end{proof}
\begin{corollary}\label{cor:local-martingale}
Suppose Assumption \ref{ass:classical-framework} holds. Let $\phi\in C^2(\Omega)$, let $H\Subset\Omega$ be open, and take $X_0=x\in H$.  Write $\tau_H=\inf\{t\geq0:X_t\notin H\}$.
Then
\[
 M_{t\wedge\tau_H}^\phi=\phi(X_{t\wedge\tau_H})-\phi(x)-\int_0^{t\wedge\tau_H}\mathcal{L}_{u,\varepsilon}\phi(X_s)ds
\]
is a square-integrable martingale on every finite time interval.
\end{corollary}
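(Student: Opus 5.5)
The plan is to apply It\^o's formula to the stopped process and identify $M^\phi$ with a stopped stochastic integral. Let $X$ be the solution from Theorem~\ref{thm:sde-existence}, realized as $X_t=\widetilde X_{t\wedge\tau_\Omega}$. Since $H\Subset\Omega$, we have $\tau_H\leq\tau_\Omega$, so $X_{s}\in\overline H$ for $s\leq t\wedge\tau_H$. On $\overline H$ the extended coefficients coincide with $b_{u,\varepsilon},\sigma_{u,\varepsilon}$ by \eqref{eq:thm73-coefficients-agree}. Moreover, $\phi\in C^2(\Omega)$ together with $\nabla\phi$ and $D^2\phi$ is bounded on the compact set $\overline H\subset\Omega$. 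The first step is to apply the multidimensional It\^o formula \eqref{eq:thm74-Ito-matrix-form} up to $t\wedge\tau_H$, exactly as in \eqref{eq:thm74-integrated-Ito}. Using $\frac12 a_{u,\varepsilon}=D_{u,\varepsilon}$, this gives the pathwise identity
\[
M^\phi_{t\wedge\tau_H}=\int_0^{t\wedge\tau_H}\nabla\phi(X_s)^\top\sigma_{u,\varepsilon}(X_s)\,dW_s=\int_0^{t}\mathbf 1_{\{s\leq\tau_H\}}\nabla\phi(X_s)^\top\sigma_{u,\varepsilon}(X_s)\,dW_s .
\]

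The one technical point is that $\phi$ is only defined on $\Omega$, so It\^o's formula should be applied to a globally $C^2$ function. I would fix an open set $H'$ with $\overline H\subset H'\Subset\Omega$ and a cutoff $\psi\in C_c^\infty(\Omega)$ with $\psi=1$ on $\overline{H'}$. Then $\psi\phi\in C^2_c(\mathbb R^N)$, and I apply It\^o's formula to $\psi\phi(\widetilde X_t)$. Before $\tau_H$ the process lies in $\overline H$, where $\psi\phi=\phi$ and all derivatives agree. So the stopped identity above holds with the original $\phi$ and $\mathcal L_{u,\varepsilon}\phi$.

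Next I check square integrability. The integrand $g_s=\mathbf 1_{\{s\leq\tau_H\}}\sigma_{u,\varepsilon}(X_s)^\top\nabla\phi(X_s)$ is progressively measurable: $\tau_H$ is a stopping time, $X$ is continuous and adapted, and the functions involved are continuous. It is also bounded by $C_H:=\sup_{\overline H}\norm{\sigma_{u,\varepsilon}}_{\mathrm F}\abs{\nabla\phi}<\infty$, using the Frobenius bound from Lemma~\ref{lem:lipschitz-coeff}. Hence $\mathbb E\int_0^T\abs{g_s}^2ds\leq C_H^2T$. By the standard $L^2$ theory of the It\^o integral, $t\mapsto\int_0^t g_s\,dW_s$ is a continuous square-integrable martingale on $[0,T]$ with $\mathbb E\abs{\cdot}^2\leq C_H^2T$ by the It\^o isometry. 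As an alternative route, $\phi$, $\mathcal L_{u,\varepsilon}\phi$ are bounded on $\overline H$, so $\abs{M^\phi_{t\wedge\tau_H}}\leq 2\sup_{\overline H}\abs\phi+t\sup_{\overline H}\abs{\mathcal L_{u,\varepsilon}\phi}$. This gives square integrability directly, but the martingale property still comes from the stochastic integral representation.

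The main obstacle is only the localization bookkeeping: making sure It\^o's formula is legitimately applied to a function defined on $\Omega$, and that the identity holds for all $t$ including after $\tau_H$. The cutoff $\psi$ and the indicator formulation handle both. Everything else is a direct reuse of the computation in the proof of Theorem~\ref{thm:companion-generator}.
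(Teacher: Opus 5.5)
Your proposal is correct and follows essentially the same route as the paper: It\^o's formula identifies $M^\phi_{t\wedge\tau_H}$ with the stopped stochastic integral of $\mathbf 1_{\{s\le\tau_H\}}\sigma_{u,\varepsilon}(X_s)^\top\nabla\phi(X_s)$, whose integrand is bounded on $\overline H$, so $\mathbb E\int_0^T\abs{g_s}^2ds\leq C_H^2T$ and the It\^o integral is a square-integrable martingale on $[0,T]$. The cutoff $\psi\phi\in C^2_c(\mathbb{R}^N)$ is extra rigor that the paper leaves implicit, since the paper applies It\^o's formula to $\phi\in C^2(\Omega)$ directly.
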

\begin{proof}
It\^o's formula gives
\[
 M_{t\wedge\tau_H}^\phi=\int_0^t\mathbf1_{\{s<\tau_H\}}\nabla\phi(X_{s\wedge\tau_H})^\top\sigma_{u,\varepsilon}(X_{s\wedge\tau_H})dW_s.
\]
Both $\nabla\phi$ and $\sigma_{u,\varepsilon}$ are bounded on $\overline H\Subset\Omega$, so for every $T<\infty$,
\[
\mathbb{E}^x\int_0^{T\wedge\tau_H}|\sigma_{u,\varepsilon}(X_s)^\top\nabla\phi(X_s)|^2ds\leq T\sup_{y\in\overline H}|\sigma_{u,\varepsilon}(y)^\top\nabla\phi(y)|^2<\infty.
\]
Thus the process is a true square-integrable martingale on $[0,T]$.
\end{proof}
Lastly, we present a result regarding Dynkin's formula.
\begin{corollary}\label{cor:dynkin}
Under Assumption \ref{ass:classical-framework}, let $\theta$ be an $(\mathcal F_t)$-stopping time satisfying $0\leq\theta\leq T\wedge\tau_\Omega$ for some deterministic $T<\infty$.  For every $\phi\in C^2(\mathcal{U})$,
\begin{equation}\label{eq:dynkin}
\mathbb{E}^x\phi(X_\theta)=\phi(x)+\mathbb{E}^x\int_0^\theta\mathcal{L}_{u,\varepsilon}\phi(X_s)ds.
\end{equation}
\end{corollary}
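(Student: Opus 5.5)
The plan is to apply It\^o's formula to $\phi(X_t)$ for the stopped process and then take expectations at the bounded stopping time $\theta$. The key point is that the coefficients and the derivatives of $\phi$ are bounded on $\overline\Omega$. This makes the stochastic integral a true martingale up to $T\wedge\tau_\Omega$, and optional stopping then applies.

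First, I will work with the globally extended process $\widetilde X$ from the proof of Theorem~\ref{thm:sde-existence}, with $X_t=\widetilde X_{t\wedge\tau_\Omega}$. Since $\phi\in C^2(\mathcal U)$ and $\overline\Omega\subset V_1\Subset\mathcal U$, I will choose a cutoff $\psi\in C_c^\infty(\mathcal U)$ with $\psi=1$ on $\overline{V_1}$ and set $\widetilde\phi=\psi\phi\in C^2_c(\mathbb R^N)$. The function $\widetilde\phi$ agrees with $\phi$ on $\overline\Omega$. It\^o's formula for $\widetilde X$ and $\widetilde\phi$ is then valid globally. Evaluating it at $t\wedge\theta$ with $\theta\leq\tau_\Omega$ gives
\[
\phi(X_{\theta})=\phi(x)+\int_0^{\theta}\mathcal L_{u,\varepsilon}\phi(X_s)\,ds+\int_0^{\theta}\nabla\phi(X_s)^\top\sigma_{u,\varepsilon}(X_s)\,dW_s .
\]
Here I use that for $s<\tau_\Omega$ the path lies in $\Omega$, where $\widetilde\phi=\phi$, $\widetilde b_{u,\varepsilon}=b_{u,\varepsilon}$ and $\widetilde\sigma_{u,\varepsilon}=\sigma_{u,\varepsilon}$. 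The factor $\tfrac12\cdot 2D_{u,\varepsilon}$ reproduces $\mathcal L_{u,\varepsilon}$ exactly as in \eqref{eq:thm74-factor-two-cancellation}. At $s=\tau_\Omega$ the path is in $\partial\Omega\subset\overline\Omega$, where all these identities still hold, and in any case a single time point does not affect the integrals.

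Second, I will show the stochastic integral has zero expectation. The integrand $\mathbf 1_{\{s<\theta\}}\nabla\phi(X_s)^\top\sigma_{u,\varepsilon}(X_s)$ is progressively measurable. By Lemma~\ref{lem:lipschitz-coeff} and continuity of $\nabla\phi$ on the compact set $\overline\Omega$, it is bounded by $\sup_{\overline\Omega}|\nabla\phi|\,\sqrt{2N\beta_u}$. Since $\theta\leq T$, the It\^o isometry gives finite second moment, bounded by $T$ times this constant squared. Hence $\int_0^{t\wedge\theta}\cdots\,dW_s$ is a square-integrable martingale, as in Corollary~\ref{cor:local-martingale}, and its value at $t=T$ has expectation zero. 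Similarly, $|\mathcal L_{u,\varepsilon}\phi|$ is bounded on $\overline\Omega$ by $B_0\sup|\nabla\phi|+\beta_u\sqrt N\sup\|D^2\phi\|_{\mathrm F}$. So the time integral is integrable, bounded by $T$ times this constant, and taking expectations yields \eqref{eq:dynkin}.

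The main obstacle I expect is the behaviour at the exit time itself. Corollary~\ref{cor:local-martingale} only treats $H\Subset\Omega$, whereas here $\theta$ may equal $\tau_\Omega$, where $X$ sits on $\partial\Omega$. This is exactly why I will work with the extended process and the extended $\widetilde\phi$, rather than with an exhaustion $H_k\uparrow\Omega$. If one preferred the exhaustion route, the argument would apply Corollary~\ref{cor:local-martingale} at $\theta\wedge\tau_{H_k}$ and pass $k\to\infty$. That passage would use continuity of paths, $\tau_{H_k}\uparrow\tau_\Omega$, and dominated convergence with the uniform bounds above, together with continuity of $\phi$ on $\overline\Omega$. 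I would keep this as a fallback.
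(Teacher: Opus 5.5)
Your proposal is correct and follows the paper's proof: apply It\^o's formula at $\theta$, bound the stochastic integrand on $\overline\Omega$ so the stopped integral is a square-integrable martingale on $[0,T]$ with zero expectation, then take expectations. Working with the extended process $\widetilde X$ and the cut-off test function $\psi\phi$ only spells out why It\^o's formula holds up to a time that may equal $\tau_\Omega$; the paper leaves this implicit in its remark that $\overline\Omega\Subset\mathcal U$.
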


\begin{proof}
It\^o's formula at the stopping time $\theta$ gives
\begin{equation}\label{eq:dynkin-a}
\phi(X_\theta)=\phi(x)+\int_0^\theta\mathcal{L}_{u,\varepsilon}\phi(X_s)ds+\int_0^\theta\nabla\phi(X_s)^\top\sigma_{u,\varepsilon}(X_s)dW_s.
\end{equation}
Since $\overline\Omega\Subset\mathcal{U}$, all derivatives needed here are bounded on $\overline\Omega$, and
\[
\mathbb{E}^x\int_0^\theta|\sigma_{u,\varepsilon}(X_s)^\top\nabla\phi(X_s)|^2ds\leq T\sup_{y\in\overline\Omega}|\sigma_{u,\varepsilon}(y)^\top\nabla\phi(y)|^2<\infty.
\]
The stopped stochastic integral is the terminal value at $T$ of a square-integrable martingale starting at zero, so its expectation is zero. Taking expectations in \eqref{eq:dynkin-a} proves
\eqref{eq:dynkin}.
\end{proof}

\section{Appendix}\label{app:example}
\subsection*{A nonconstant exponent example}

\begin{example}\label{app:example:statement}
Let $\Omega=(-1,1)$, $\mathcal U=(-2,2)$, $V=(-3/2,3/2)$, and fix $\varepsilon>0$. Take $p(x)=3+x/4$ and $u(x)=x^2/2$, and set
\[
r(x)=\sqrt{\varepsilon^2+x^2},\quad F(x)=r(x)^{p(x)-2}.
\]
The data are smooth on $\mathcal U$ and satisfy $5/2<p(x)<7/2$, so Assumption \ref{ass:classical-framework} holds. Abbreviating the frozen coefficients by $D$, $b$, and $\sigma$,
the flux-based prescriptions give
\begin{align*}
D(x)&=F(x)\left(1+(p(x)-2)\frac{x^2}{r(x)^2}\right),\\
b(x)&=\frac14F(x)\log r(x),\quad \sigma(x)=\sqrt{2D(x)}.
\end{align*}
Since $r\geq \varepsilon$ and $p>2$, we have $D>0$. The coefficients are smooth on $\mathcal U$, hence bounded and Lipschitz on $\overline V$, with
$\inf_{\overline V}D>0$. Theorem \ref{thm:sde-existence} therefore yields a pathwise-unique strong solution, with no explosion before exit, of
\[
dX_t=b(X_t)\,dt+\sigma(X_t)\,dW_t,\quad
X_0=x_0\in\Omega,\quad t<\tau_\Omega.
\]
Its interior generator is
\[
\mathcal L\phi=D\phi''+b\phi'.
\]
Because $A_\varepsilon(x,u'(x))=xF(x)$, direct differentiation gives
\[
\mathcal Lu(x)=D(x)+xb(x)=\frac{d}{dx}[xF(x)]=\Delta_{p(\cdot),\varepsilon}u(x).
\]
At the critical point $u'(0)=0$,
\[
D(0)=\varepsilon,\quad b(0)=\frac{\varepsilon\log\varepsilon}{4},\quad \sigma(0)=\sqrt{2\varepsilon}.
\]
Thus $\mathcal Lu(0)=\varepsilon$: regularization preserves ellipticity at zero gradient, while spatial variation of $p$ produces the logarithmic drift. The drift itself is nonzero at
$0$ when $\varepsilon\neq 1$, although $b(0)u'(0)=0$.
\end{example}

\begin{remark}\label{app:example:nonuniqueness}
We would like to note that reproduction alone does not select the coefficients. For $\kappa\geq0$, the smooth fields
$\widehat D=(1+\kappa x^2)D$ and
$\widehat b=b-\kappa xD$ satisfy
$\widehat D+x\widehat b=D+xb$ and $\widehat D>0$.
For $\kappa>0$ they differ from the displayed coefficients away from $0$, illustrating why the flux-based selection rules matter.
\end{remark}

\section*{Funding}
This work was supported by Athabasca University Research Incentive Account [140111 RIA].

\end{document}